\definecolor{green}{rgb}{0,.5,0} 
\newcommand{\Aut}{\mathrm{Aut}}
\newcommand{\Hom}{\mathrm{Hom}}
\newcommand{\Sym}{\mathrm{Sym}}
\newcommand{\Alt}{\mathrm{Alt}}
\newcommand{\SL}{\mathrm{SL}}
\newcommand{\Sp}{\mathrm{Sp}}
\newcommand{\EL}{\mathrm{EL}}
\newcommand{\id}{\mathrm{id}}
\newcommand{\F}{\mathbf{F}}
\newcommand{\Z}{\mathbf{Z}}
\newcommand{\Q}{\mathbf{Q}}
\newcommand{\trans}[4]{{\alpha_{#1;#2}^{(#3)}(#4)}}
\newcommand{\dtrans}[6]{{\alpha_{#1;#2,#3}^{(#4,#5)}(#6)}}
\newcommand{\Trans}[3]{{A_{#1;#2}^{(#3)}}}
\newcommand{\dTrans}[5]{{A_{#1;#2,#3}^{(#4,#5)}}}
\theoremstyle{plain} 
\newtheorem{thm}{Theorem}[section]
\newtheorem{lem}[thm]{Lemma}
\newtheorem{prop}[thm]{Proposition}
\newtheorem{lemma}[thm]{Lemma}
\newtheorem{coro}[thm]{Corollary}
\newtheorem{step}{Step}
\theoremstyle{definition}
\newtheorem{remark}[thm]{Remark}
\newtheorem{observation}[thm]{Observation}
\newtheorem{definition}[thm]{Definition}
\title[Property (T) groups with many alternating quotients]{Tame automorphism groups of polynomial rings with property (T) and infinitely many alternating group quotients}
\author[P.-E.~Caprace]{Pierre-Emmanuel Caprace}
\address{Institut de Recherche en Math\'ematiques et Physique, UCLouvain, Belgium}
\thanks{PEC is supported in part by the FWO and the F.R.S.-FNRS under the EOS programme (project ID 40007542)} 
\author[M.~Kassabov]{Martin Kassabov}
\address{Cornell University, USA}
\thanks{MK is supported in part by Simons Foundation Grant 713557}
\date{May 8, 2023}
\begin{document}

\begin{abstract}
We construct  new families of groups with property (T) and infinitely many alternating group quotients.  One of those consists of  subgroups of $\Aut(\F_{p}[x_1, \dots, x_n])$ generated by a suitable set of tame automorphisms. Finite quotients are constructed using the natural action of $\Aut(\F_{p}[x_1, \dots, x_n])$ on the $n$-dimensional affine spaces over finite extensions of $\F_p$. As a consequence, we obtain explicit presentations of Gromov hyperbolic groups with property~(T) and infinitely many alternating group quotients. Our construction also yields an explicit infinite family of expander Cayley graphs of degree~$4$ for alternating groups of degree $p^7-1$ for any odd prime $p$. 
\end{abstract}

\maketitle

\setcounter{tocdepth}{1}
\tableofcontents

\section{Introduction}

Recent works by M.~Kaluba, P.~Nowak, N.~Ozawa~\cite{KNO},  M.~Kaluba, D.~Kielak, P.~Nowak~\cite{KKN} and M. Nitsche~\cite{N} showed that the automorphism group $\Aut(F_n)$ of the free group of rank~$n$ has Kazhdan's property (T) for all $n \geq 4$. 
Earlier, R.~Gilman~\cite{Gilman} had proved that $\Aut(F_n)$ has infinitely many alternating group quotients for all $n \geq 3$. To the best of our knowledge, this is up to now the only known family of group with property (T) and with infinitely many alternating  group quotients. The existence of families of groups with property ($\tau$) (which is weaker variant of (T)) and infinitely many alternating  group quotients follows from the work of the second author~\cite{K2}. 
The goal of this paper is to provide an explicit construction of other families of groups with property (T) and with infinitely many alternating  group quotients. As an application of our methods, we also provide an explicit construction of a triple of permutations which generate $\Alt(p^3-1)$ for any odd prime $p$, such that the resulting Cayley graphs are expanders, and another explicit pair of permutations in $\Alt(p^7-1)$ with the same properties.

\medskip

\subsection{Kazhdan groups with infinitely many alternating group quotients}

To describe our construction, we fix a prime $p$ and  an integer $n \geq 3$. We consider the commutative ring $\F_p[x_1, \dots , x_n]$ of polynomials in $n$ indeterminates with coefficients in the field $\mathbf F_p$ of order~$p$. For each integer $e \geq 1$ and all $i=1, \dots, n$, we consider the automorphism $\tau_i^{(e)} \in \Aut(\F_p[x_1, \dots , x_n])$ defined by the assignments
$$\tau_i^{(e)} \colon x_\ell \mapsto \left\{\begin{array}{ll}
x_i + (x_{i+1})^e & \text{if } \ell = i < n\\
x_n + (x_1)^e & \text{if } \ell = i = n\\
x_\ell & \text{if } \ell \neq i
\end{array}\right.$$
Those automorphisms can be thought of as \textit{polynomial transvections}. 

For every $n$-tuple $\mathbf{e}= (e_1, \dots, e_n)$ of strictly positive integers, we associate the group 
$$
G_{\F_p, \mathbf{e}} = \langle \tau_1^{(e_1)}, \dots, \tau_n^{(e_n)}\rangle \leq \Aut(\F_p[x_1, \dots, x_n]).
$$
The number $n$ is called the \textbf{rank} of $G_{\F_p, \mathbf{e}}$.
By definition the group $G_{\F_p, \mathbf{e}}$ 
consists of \textbf{tame automorphisms} of the ring $\F_p[x_1, \dots, x_n]$ (see~\cite{ShestakovUmirbaev} for more details on background and terminology on automorphism groups of polynomial rings).
It is easy to see that the rank~$n$ group $G_{\F_p, (1, 1, \dots, 1)}$ is isomorphic to $\SL_n(\F_p)$. The isomorphism maps the generator $\tau_i^{(1)}$ on the unipotent matrix $1+E_{i+1, i}$ for $i < n$ (resp. $1+E_{1, n}$ if $i=n$). In particular $G_{\F_p, (1, 1, \dots, 1)}$ is a quasi-simple finite group. However, as soon as $\max \{e_1, \dots, e_n\} \geq 2$, the group $G_{\F_p, \mathbf{e}}$ is residually a finite $p$-group, see Proposition~\ref{prop:Res-p} below.
Using criteria due to Ershov--Jaikin~\cite{EJ} and Kassabov~\cite{K}, we shall prove the following. 

\begin{thm}\label{thm:T-intro}
Let $n \geq 3$ and $\mathbf{e}= (e_1, \dots, e_n)$ be a tuple of positive integers. For each prime $p$ such that $p > 4\max\{e_1, \dots, e_n\}$,  the group $G_{\F_p, \mathbf{e}}$ has Kazhdan's property (T).
\end{thm}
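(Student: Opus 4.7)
The plan is to derive property (T) from the subgroup-based criteria of Ershov--Jaikin \cite{EJ} and Kassabov \cite{K}. Such criteria state, roughly, that a group $G$ generated by a family of subgroups $\{H_i\}$ arranged along a graph $\Gamma$ has property (T) provided that, for every edge $\{i,j\}$ of $\Gamma$, a uniform upper bound (strictly less than an explicit threshold) holds for the codistance (equivalently, a $\cos^2$-angle) between $H_i$ and $H_j$ in any unitary representation of the edge subgroup $\langle H_i, H_j\rangle$, and $\Gamma$ itself has sufficient spectral gap.

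The natural generating family for $G := G_{\F_p, n; e_1, \dots, e_n}$ is $H_i := \langle \tau_i^{(e_i)}\rangle \cong \Z/p\Z$, organized along the $n$-cycle $C_n$. Indeed $\tau_i^{(e_i)}$ and $\tau_j^{(e_j)}$ affect disjoint sets of variables whenever $\{i,j\}$ is not an edge of $C_n$, so they commute, and only the $n$ edge subgroups $L_i := \langle \tau_i^{(e_i)}, \tau_{i+1}^{(e_{i+1})}\rangle$ (indices cyclic) contribute non-trivial interactions. The proof then splits into a local step---bounding, for each $i$, the codistance between $H_i$ and $H_{i+1}$ inside the group $L_i \le \Aut(\F_p[x_i, x_{i+1}, x_{i+2}])$---and a global step assembling these local bounds into (T) for $G$ via the graph criterion; the latter should be routine once the former is established, since $C_n$ is a connected regular graph with well-understood spectrum.

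The main obstacle is the local angle estimate, since each $L_i$ is an infinite group whose unitary representation theory is not directly accessible when $e_i$ or $e_{i+1}$ exceeds $1$. A workable strategy is to bound the codistance directly via commutator manipulations: any pair of unit vectors fixed respectively by $H_i$ and $H_{i+1}$ in a unitary representation of $L_i$ should be comparable through the $p$ conjugates of the first generator under the powers of the second, exploiting the explicit polynomial identities arising from iterating $\tau_i^{(e_i)}$ against $\tau_{i+1}^{(e_{i+1})}$---these produce orbits of controlled combinatorial complexity governed by $e_i$ and $e_{i+1}$. I expect the argument to yield a bound for the codistance of order $\max(e_i, e_{i+1})/p$, whose square $\max(e_i, e_{i+1})^2/p^2$ is forced below the relevant critical threshold precisely by the hypothesis $p > 4\max_i e_i$; the factor $4$ encodes the passage from a linear displacement inequality to a squared-angle bound. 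Feeding this into the graph criterion applied to $C_n$ then yields property (T) for $G$.
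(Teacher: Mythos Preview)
Your overall architecture matches the paper's exactly: generate $G$ by the cyclic subgroups $X_i \cong \F_p$ arranged along the $n$-cycle, observe that non-adjacent generators commute, bound the representation angle along each edge, and feed these bounds into Kassabov's spectral criterion. Two points need correction, however. First, the edge group $L_i = \langle X_i, X_{i+1}\rangle$ is \emph{finite}, not infinite: Proposition~\ref{prop:nilpotent} identifies it with the explicit nilpotent group $\Gamma_{e_i, \F_p} \cong \F_p^{\,e_i+1} \rtimes \F_p$ of class $e_i+1$ (note that only $e_i$ enters, not $e_{i+1}$). Second, the correct local bound is $\varepsilon(L_i; X_i, X_{i+1}) \leq \sqrt{e_i/p}$, not of order $e_i/p$: this is Theorem~\ref{thm:rep-angle-improved}, proved via the spectral measure of the normal abelian subgroup of $\Gamma_{e_i,\F_p}$ together with a Vandermonde argument (Lemma~\ref{lem:aux1}) showing that for any nontrivial character at most $e_i$ of its $p$ translates under $Y(1)^t$ annihilate $X_0$. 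The factor $4$ then arises not from ``a linear displacement inequality to a squared-angle bound'' but because the matrix criterion (Proposition~\ref{prop:pos-def-matrix}(i)) requires $\sqrt{e_i/p} + \sqrt{e_{i+1}/p} < 1$ for every $i$, and this is exactly what $p > 4\max_j e_j$ guarantees.
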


We refer to Section~\ref{sec:Kazhdan} for more general results as well as estimates of the Kazhdan constants. The bound for $p$ in Theorem~\ref{thm:T-intro}
is not the best possible and can be slightly improved when the exponents $e_i$ are not all equal.
It should be underlined  that when $p > \max\{e_1, \dots, e_n\}> 1$, 
the group $G_{\F_p, \mathbf{e}}$ is infinite; indeed it contains an infinite elementary abelian $p$-group (see Corollary~\ref{cor:ElementaryAbelian} below). 

Our next goal is to construct a  family of large finite quotients of those groups. The source of such finite quotients is provided by the natural action of the group 
$\Aut(\F_p[x_1, \dots, x_n])$ on the points of the affine space $k^n$, where $k$ is a finite field of characteristic~$p$: a point $(a_1, \dots, a_n) \in k^n$  may be viewed as a homomorphism of $\F_p$-algebras  $\F_p[x_1, \dots, x_n] \to k : f \mapsto f(a_1, \dots, a_n)$. By pre-composing such a homomorphism by an element of $\Aut(\F_p[x_1, \dots, x_n])$, we obtain a natural permutation action of $\Aut(\F_p[x_1, \dots, x_n])$ on the finite set $k^n$. In particular we obtain a homomorphism
$$
G_{\F_p,\mathbf{e}} \to \Sym(k^n),
$$
which  takes its values in $\Alt(k^n)$ if $p> 2$, since the group is generated by elements of order $p$.  This permutation action is not transitive: it fixes the origin $(0, \dots, 0)$ and preserves the subset $F^n$ for each intermediate field $\F_p \subset F \subset k$. Moreover, the action of $G_{\F_p,\mathbf{e}}$ commutes with the natural action of the Galois group $\Aut(k)$, which is generated by the Frobenius automorphism. Under suitable assumptions on the parameters, we will show that $G_{\F_p,\mathbf{e}}$ acts $t$-transitively on a suitable quotient of a large subset of $k^n$, with $t \geq 4$. It is a well known consequence of the Classification of the Finite Simple Groups (CFSG) that a finite $4$-transitive group on a set of cardinality
~$\geq 25$ is the full alternating or symmetric group on that set (see~\cite[Th.~4.11]{Cameron}). The following result will be derived. 

\begin{thm}\label{thm:Alt-quotient-intro}
Let $n \geq 3$ and $\mathbf{e} = (e_1, \dots, e_n)$ be a tuple of positive integers. Suppose that $E= e_1\dots e_n \geq 2$. For each prime  $p \geq 3E-2$, the group $G_{\F_p,\mathbf{e}}$ has a quotient isomorphic to $\Alt(d)$ for infinitely many degrees~$d$.
\end{thm}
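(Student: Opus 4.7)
The plan is to realize the required alternating quotients of $G = G_{\F_p, n; e_1, \dots, e_n}$ through its natural permutation actions on Galois-quotients of affine spaces over finite extensions of $\F_p$.

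For each integer $m \geq 1$, set $k = \F_{p^m}$ and let $\Gamma = \Aut(k/\F_p)$, the cyclic Galois group of order $m$. Consider
\[
X_k \;=\; k^n \;\setminus\; \bigcup_{F \subsetneq k} F^n,
\]
the set of $n$-tuples whose coordinates are not all contained in any proper subfield. This set is $G$-invariant (since the $G$-action preserves every subset of the form $F^n$), and the $\Gamma$-action on $X_k$ is \emph{free}: any tuple fixed by a non-identity $\sigma \in \Gamma$ would have all coordinates in the proper subfield $k^{\langle\sigma\rangle}$. Because the $\tau_i^{(e_i)}$ are defined over $\F_p$, the $G$- and $\Gamma$-actions commute, so $G$ descends to an action on the finite set $Y_m := X_k/\Gamma$, whose cardinality grows like $p^{nm}/m \to \infty$ with $m$.

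Since each generator $\tau_i^{(e_i)}$ has order $p$ (odd, as $p \geq 3E-2 \geq 4$), its image in $\Sym(Y_m)$ is a product of $p$-cycles, hence even. Consequently the image $\bar G_m$ of $G$ in $\Sym(Y_m)$ lies inside $\Alt(Y_m)$.

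The central step---and the main obstacle---is to prove that, under the assumption $p \geq 3E-2$, the $G$-action on $Y_m$ is \emph{$4$-transitive} for all sufficiently large $m$. I would approach this by first establishing transitivity on $Y_m$ (or on a saturated $G$-orbit of full asymptotic density), then climbing successively to $2$-, $3$-, and $4$-transitivity by examining point-stabilizers and exploiting the rich collection of moves generated by the polynomial transvections $\tau_i^{(e_i)}$ together with their conjugates and commutators; the numerical hypothesis $p \geq 3E-2$ should be exactly what is needed to guarantee that sufficiently many independent elementary moves can be combined to reach an arbitrary $4$-tuple from a fixed reference.

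Granting this $4$-transitivity, the CFSG-based result \cite[Th.~4.11]{Cameron} implies that any $4$-transitive subgroup of $\Sym(Y_m)$ of degree at least $25$ is either $\Alt(Y_m)$ or $\Sym(Y_m)$. Combined with $\bar G_m \leq \Alt(Y_m)$, this forces $\bar G_m = \Alt(Y_m)$. Finally, selecting an infinite sequence of values of $m$ (e.g.\ $m$ prime and larger than any previously used degree) for which the cardinalities $|Y_m|$ are pairwise distinct yields infinitely many pairwise non-isomorphic alternating quotients of $G$, completing the proof.
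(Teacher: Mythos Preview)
Your overall architecture matches the paper's---act on $k^n$ for finite extensions $k/\F_p$, pass to a quotient by the symmetries commuting with $G$, prove high transitivity there, then invoke the classification of $4$-transitive groups---but the setup has a concrete error that makes the plan fail as stated.

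The group $\Gamma$ you quotient by is too small. Besides the Frobenius, the $G$-action on $k^n$ also commutes with the map
\[
m_\lambda \colon (a_1,\dots,a_n)\;\longmapsto\;(\lambda a_1,\ \lambda^{E/e_1}a_2,\ \dots,\ \lambda^{e_n}a_n)
\]
for every $(E-1)$-st root of unity $\lambda\in k$ (this is Lemma~\ref{lem:cyclic-order-E-1} in the paper). The correct $\Gamma$ is therefore $\Z\ltimes\Z/(E-1)\Z$, not just the Galois group. Whenever $E>2$ and $k$ contains a nontrivial $(E-1)$-st root of unity---for instance whenever $p\equiv 1$ modulo some prime divisor of $E-1$, which the hypothesis $p\ge 3E-2$ does not exclude---the map $m_\lambda$ descends to a nontrivial permutation of your $Y_m$ commuting with $G$. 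The $G$-action on $Y_m$ is then genuinely imprimitive, hence not even $2$-transitive, and your argument cannot proceed. Relatedly, your set $X_k=k^n\setminus\bigcup_{F\subsetneq k}F^n$ is in general not a single $G$-orbit; the paper isolates the correct orbit $\Phi=\{\phi : A_{\phi,0}=k\}$ using a $\Z/(E-1)\Z$-grading of the polynomial ring (Definition~\ref{grading}), and this grading is precisely what produces the extra $m_\lambda$-symmetry.

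A second, independent gap is that the ``central step'' is not a proof but a plan: you write that you \emph{would} establish $4$-transitivity by ``climbing'' through stabilizers and that $p\ge 3E-2$ ``should be exactly what is needed''. In the paper this is the substantial part (Theorem~\ref{thm:almost-k-transitivity}), and it requires first building a large supply of polynomial transvections $\beta_{i;j}^P$ inside $G$ (Theorem~\ref{thm:transvections} and Corollary~\ref{cor:PolynomialTransvection}), then combining these with an interpolation lemma (Lemma~\ref{lem:polynomial-map-k-tuple}) and careful counting over finite fields (Lemma~\ref{claim:enlarge}) to move $k$-tuples of $\Gamma$-orbits to a standard position. The bound $p\ge 3E-2$ enters in that counting, not merely as a plausibility heuristic.
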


By considering suitable special cases, we shall obtain the following result, where the degrees of the alternating group quotients are explicit (the extra assumptions in part (ii) are needed to ensure that the alternating quotient is of the correct size).

\begin{coro}\label{coro:Alt-intro}
\ 
\begin{enumerate}[label=(\roman*)]
    \item  Let  $p \geq 5$ be a prime and $n\geq 3$ an integer. Then rank~$n$ group $G_{\F_p,( 1, 1, \dots, 1, 2)}$ has a quotient isomorphic to $\Alt(\frac{p^{n\ell}-p^n}\ell)$ for each prime $\ell \geq 3$. 
    
    \item Let $\sigma \in \Aut(\F_p[x_1, x_2, x_3])$ be the automorphism defined by  $\sigma(x_i) = x_{i+1}$, where indices are taken modulo~$3$. Let $\widetilde G = \langle \sigma \rangle \ltimes G_{\F_p,(2, 2, 2)}$. If $p \geq 23$ and $p \neq 1 \mod 7$, then the group  $\widetilde G$ has a quotient isomorphic to $\Alt(\frac{p^{3\ell}-p^3}\ell)$ for each prime $\ell \geq 5$.
\end{enumerate}
\end{coro}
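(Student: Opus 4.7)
The plan is to make the alternating quotients from Theorem~\ref{thm:Alt-quotient-intro} explicit by choosing the extension field $k$ of $\F_p$ to be $\F_{p^\ell}$ with $\ell$ prime, and by pinning down the orbit on which the $4$-transitive action happens. Set $k := \F_{p^\ell}$; since $\ell$ is prime, the only proper subfield of $k$ is $\F_p$ itself. Put $X := k^n \setminus \F_p^n$. The subset $X$ is preserved by the natural action of $\Aut(\F_p[x_1, \dots, x_n])$, and the Galois group $\Gamma := \Aut(k/\F_p) \cong \Z/\ell\Z$ acts on $X$ coordinatewise. A point of $X$ has at least one coordinate outside $\F_p$, which is therefore moved by every non-trivial element of $\Gamma$ (because the fixed field of any non-trivial element of the simple group $\Gamma$ must be a proper subfield of $k$, i.e.\ $\F_p$). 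Thus $\Gamma$ acts \emph{freely} on $X$ and $|X/\Gamma| = (p^{n\ell}-p^n)/\ell$. Since the action of the tame automorphism group commutes with $\Gamma$ (all generators are $\F_p$-polynomial), it descends to an action on $X/\Gamma$.

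For part (i), we have $E = e_1 \cdots e_n = 2$, so the hypothesis $p \geq 3E - 2 = 4$ of Theorem~\ref{thm:Alt-quotient-intro} is satisfied whenever $p \geq 5$. The proof of that theorem will show that $G_{\F_p,n;1,\dots,1,2}$ acts $4$-transitively on $X/\Gamma$ for each prime $\ell \geq 3$. Under our assumptions $|X/\Gamma| \geq (5^9-5^3)/3 \gg 25$, so the CFSG-based result recalled after Theorem~\ref{thm:T-intro} identifies the image of $G_{\F_p,n;1,\dots,1,2}$ in $\Sym(X/\Gamma)$ as either the alternating or the symmetric group. Since $G_{\F_p,n;1,\dots,1,2}$ is generated by elements of order $p$, an odd prime, this image is contained in $\Alt(X/\Gamma)$, and part (i) follows.

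For part (ii), $E = 8$, so the hypothesis of Theorem~\ref{thm:Alt-quotient-intro} becomes $p \geq 3E-2 = 22$, i.e.\ $p \geq 23$. The argument runs along the same lines with $\widetilde G = \langle \sigma \rangle \ltimes G_{\F_p, 3;2,2,2}$ in place of $G$; note that $\sigma$ normalizes $G_{\F_p, 3;2,2,2}$ (it cyclically permutes the three polynomial transvections $\tau_1^{(2)}, \tau_2^{(2)}, \tau_3^{(2)}$), so $\widetilde G$ is a well-defined subgroup of $\Aut(\F_p[x_1,x_2,x_3])$, and its action on $X/\Gamma$ is again contained in $\Alt(X/\Gamma)$ since the $\tau_i^{(2)}$ have order $p$ and $\sigma^3 = \id$ acts as an even permutation of points. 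The extra hypotheses $\ell \geq 5$ and $p \not\equiv 1 \pmod 7$ are what one needs in order to ensure that the $\widetilde G$-orbit on $X/\Gamma$ has the expected size $(p^{3\ell}-p^3)/\ell$, and to rule out the few degree coincidences that could allow $\widetilde G$ to act through a smaller $4$-transitive group (of Mathieu type, or linear type such as $\PSL_3(2)$ which is $\Alt(7)$ acting on $7$ points and relates to the congruence $p \equiv 1 \pmod 7$).

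The main obstacle, in both parts, is the sharp $4$-transitivity of the induced action on the specific quotient $X/\Gamma$: one must exhibit enough elements of $G$ (resp.\ $\widetilde G$) that move prescribed $4$-tuples of $\Gamma$-orbits on $X$ to arbitrary $4$-tuples. This is the combinatorial heart of Theorem~\ref{thm:Alt-quotient-intro}; once it is in hand, the corollary reduces to the orbit-counting and CFSG arguments sketched above.
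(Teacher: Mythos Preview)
Your treatment of part~(i) is essentially the paper's own argument: since $E=2$, the only symmetry of $k^n$ commuting with the $G$-action is the Galois group, so quotienting $X=k^n\setminus\F_p^n$ by the Frobenius gives the right set, and the $4$-transitivity established in the course of proving Theorem~\ref{thm:Alt-quotient-intro} finishes the job.

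Part~(ii), however, has a genuine gap. When $E=e_1e_2e_3=8>2$, the Galois group is \emph{not} the full group of permutations of $k^3$ commuting with $G$. There is an additional cyclic action of $\Z/(E-1)\Z=\Z/7\Z$, given by scaling each coordinate by a suitable power of a primitive $7$-th root of unity $\lambda\in k$ (concretely, $(a_1,a_2,a_3)\mapsto(\lambda a_1,\lambda^4 a_2,\lambda^2 a_3)$); this commutes with all the generators $\tau_i^{(2)}$ because it respects the $\Z/7\Z$-grading that those generators preserve. The paper's $\Gamma$ is therefore $\Z\ltimes\Z/7\Z$, and Theorem~\ref{thm:Alt-quotient-intro} only yields $4$-transitivity on $\Gamma\backslash\Phi$, not on your $X/\mathrm{Gal}$. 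If $k=\F_{p^\ell}$ contains a nontrivial $7$-th root of unity, these quotients differ and your count $(p^{3\ell}-p^3)/\ell$ is wrong.

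This is exactly what the hypotheses $p\not\equiv 1\pmod 7$ and $\ell\geq 5$ are for: together they force $p^\ell\not\equiv 1\pmod 7$ (the multiplicative order of $p$ modulo $7$ divides $\gcd(6,\ell)=1$), so $\F_{p^\ell}$ contains no nontrivial $7$-th root of unity, the $\Z/7\Z$-action is trivial, and the paper's $\Gamma$ collapses to your Galois group. Your explanation in terms of ruling out Mathieu-type or $\mathrm{PSL}_3(2)$-type coincidences is not the correct mechanism (and incidentally $\mathrm{PSL}_3(2)$ is not isomorphic to $\Alt(7)$). A secondary point you also skip is why the large $G$-orbit $\Phi$ coincides with all of $k^3\setminus\F_p^3$; this too uses $p\not\equiv 1\pmod 7$ to rule out points $\phi$ with $A_{\phi,0}=\F_p$ but $A_\phi$ a proper degree-$7$ extension.
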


The dependence on the CFSG can actually be removed --- indeed,  the proof of Theorem~\ref{thm:Alt-quotient-intro} relies on Theorem~\ref{thm:almost-k-transitivity}, which provides a $t$-transitive action of the group $G_{\F_p,\mathbf{e}}$  on a finite set set of size $d$, with $t \approx d^{1/n}$. In view of results of L.~Pyber~\cite{Pyber} (or earlier results of L.~Babai~\cite{Babai}), that do not depend on the CFSG, it follows that the image of the group contains the alternating group $\Alt(d)$ as soon as $d$ is large enough.

\subsection{Hyperbolic Kazhdan groups}\label{sec:Hyp-Kazhdan}

As a consequence of Theorems~\ref{thm:T-intro} and~\ref{thm:Alt-quotient-intro}, we obtain examples of hyperbolic groups with Kazhdan's property (T) admitting infinitely many alternating group quotients.  Indeed, for $1\leq e_1, e_2, e_3\leq 2$, the group $G_{\F_p,(e_1, e_2, e_3)}$ is a quotient of a generalized triangle group considered in~\cite{CCKW}, and called a \textbf{KMS group}. 
If $\max\{e_1, e_2, e_3\} = 2$,
that KMS group is hyperbolic and has property (T) if $p \geq 11$ (see~\cite[Sec.~7]{CCKW}). As a consequence of Theorem~\ref{thm:Alt-quotient-intro}, it has infinitely many alternating group quotients as soon as $p$ is sufficiently large. In particular, 
we obtain a positive  answer to the first part of~\cite[Question~1.6]{CCKW}. As a more specific illustration, one can observe that the KMS group%
\footnote{Here $[g,h]=g^{-1} h^{-1} gh$ is the group commutator, and $\llbracket g,h_1,h_2]$, $\llbracket g,h_1,h_2,h_3]$ denote the left normed  commutators, i.e.,  $[[g,h_1],h_2]$, $[[[g,h_1],h_2],h_3]$. }
\begin{align*}
\mathscr G_{HC_2^{(1)}}(p) = \langle a, b, c  \mid  & a^p, b^p, c^p, \llbracket a, b, a], \llbracket a, b, b], \\
& \llbracket b, c, b], \llbracket b, c, c],  \llbracket a, c, a], \llbracket a, c, c, a], \llbracket a, c, c, c]\rangle,
\end{align*}
maps onto $G_{\F_p,(1, 1, 2)}$ by sending the triple $(a, b, c)$ to $( \tau_3^{(2)}, \tau_2^{(1)}, \tau_1^{(1)})$, see Corollary~\ref{cor:nilpotent} below. Similarly, the group
$$\widetilde{\mathscr G}_{HBC_2^{(3)}}(p) = \langle t, a, b    \mid  t^3, a^p, tat^{-1}b^{-1},  \llbracket a, b, a], \llbracket a, b, b, a], \llbracket a, b, b, b ]\rangle $$
naturally maps onto the group $\widetilde G = \langle \sigma \rangle \ltimes G_{\F_p,(2, 2, 2)}$ from Corollary~\ref{coro:Alt-intro}, by sending $(t, a, b)$ to $(\sigma, \tau_1^{(2)}, \tau_2^{(2)})$. (The notation is borrowed from \cite{CCKW}.) By \cite[Theorem~1.3]{CCKW}, those two finitely presented groups are infinite hyperbolic as soon as $p$ is an odd prime; they have property (T) if $p \geq 7$ (resp. $p \geq 11$).  In view of Corollary~\ref{coro:Alt-intro}, we deduce the following. 

\begin{coro}\label{cor:hyp}
If $p \geq 5$ be a prime, the hyperbolic groups $\mathscr G_{HC_2^{(1)}}(p)$ maps onto $\Alt(\frac{p^{3\ell}-p^3}\ell)$ for each prime $\ell \geq 3$. 

If $p \geq 23$ and $p \neq 1 \mod 7$, then $\widetilde{\mathscr G}_{HBC_2^{(3)}}(p)$ maps onto $\Alt(\frac{p^{3\ell}-p^3}\ell)$ for each prime $\ell \geq 5$. 
\end{coro}

To our knowledge, these are the first explicit presentations of hyperbolic Kazhdan groups with infinitely many alternating group quotients. It should be noted that, as a consequence of \cite[Cor.~1.2]{BelegOsin}, every finitely presented Kazhdan group is a quotient of a hyperbolic Kazhdan group. Therefore, the very existence of hyperbolic Kazdhan groups with infinitely many alternating quotients  can be established using the fact that  $\mathrm{Aut}(F_n)$ is finitely presented, has (T) for $n \geq 4$, and  has infinitely many alternating group quotients. 

\subsection{Alternating groups as expanders}

We recall that, given a infinite group $G$ with Kazhdan's property (T), Cayley graphs of  finite quotients of $G$ naturally form  expander graphs  (see \cite[Theorem~6.1.8]{BHV}). In particular, the results mentioned above yield examples of families of expander graphs arising as Cayley graphs for suitable generating sets of alternating groups. For example, the groups in Corollary~\ref{coro:Alt-intro} are respectively $n$-generated and $2$-generated, so we obtain expander Cayley graphs of degree~$2n$ (for any $n \geq 3$) and~$4$ for an infinite family of alternating groups. 

The flexibility of our construction allows us to provide  such Cayley graphs for a larger collection of alternating groups.  Indeed, we shall consider a more general family of groups defined as analogues of the groups mentioned so far, constructed as subgroups of $\Aut(R[x_1, \dots, x_n])$ generated by polynomial transvections, where $R$ is an arbitrary commutative unital ring. In particular,  we shall construction a subgroup $G$ of $\Aut(\Z[1/30][x_1, \dots, x_n])$ with property (T) (see Theorem~\ref{thm:T-bis}). Using the functoriality properties of the construction, we show that for each prime $p \geq 7$, the group $G$ acts by permutations on a set of cardinality $p^n-1$, so that the image contains the natural image of $G_{\F_p, (1, 1, \dots, 1, 2)}$, which is $\Alt(p^n-1)$. This will lead us to the following construction of expanding generating sets for the family of alternating groups $\Alt(p^3-1)$  (resp. $\Alt(p^7-1)$) indexed by the prime $p$.

\begin{thm}\label{thm:expanders-intro}
Let $p$  be an odd prime prime. 

\begin{enumerate}[label=(\roman*)]
    \item The permutations 
$$
\sigma(x,y,z) = (y,z,x)
\quad
\alpha(x,y,z) = (x+y,y,z)
\quad
\beta(x,y,z) = (x+y^2,y,z),
$$
acting on the set $\F_p^3 \setminus \{(0, 0, 0)\}$ of cardinality $p^3-1$, generate the full alternating group $\Alt(p^3-1)$. The associated Cayley graphs form expanders of degree~$6$. 

\item The permutations 
$$
\rho(x_1,x_2, x_3, x_4, x_5, x_6, x_7) = (x_2, x_3, x_4, x_5, x_6, x_7,x_1)
$$
$$
\gamma(x_1,x_2, x_3, x_4, x_5, x_6, x_7) = (x_1 + x_2 ,x_2, x_3, x_4+x_6^2, x_5, x_6, x_7)
$$
acting on the set $\F_p^7 \setminus \{(0, \dots, 0)\}$ of cardinality $p^7-1$, generate the full alternating group $\Alt(p^7-1)$. The associated Cayley graphs form expanders of degree~$4$. 
\end{enumerate}
\end{thm}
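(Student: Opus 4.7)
The plan is to reinterpret each listed permutation as the action on $\F_p^n\setminus\{0\}$ of a specific element of $\Aut(\F_p[x_1,\dots,x_n])$, deduce the generation statement from the machinery developed earlier in the paper, and obtain expansion by lifting the generating sets to the Kazhdan subgroup of $\Aut(\Z[1/30][x_1,\dots,x_n])$ provided by Theorem~\ref{thm:T-bis}, combined with the classical implication (\cite[Th.~6.1.8]{BHV}) that Cayley graphs of finite quotients of a Kazhdan group with respect to a fixed finite generating set form an expander family.

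For (i), the three permutations correspond to the cyclic variable shift $\sigma\colon x_i\mapsto x_{i+1\bmod 3}$ and to the polynomial transvections $\alpha=\tau_1^{(1)}$, $\beta=\tau_1^{(2)}$. Conjugating the latter two by powers of $\sigma$ yields every $\tau_i^{(e)}$ with $i\in\{1,2,3\}$ and $e\in\{1,2\}$, so $\langle\sigma,\alpha,\beta\rangle$ contains $G_{\F_p,3;1,1,2}$. The image of $G_{\F_p,n;1,\dots,1,2}$ in $\Sym(\F_p^n\setminus\{0\})$ is identified earlier in the paper as the full alternating group $\Alt(p^n-1)$ whenever $p\geq 7$; since $\sigma,\alpha,\beta$ all have odd order and hence act as even permutations, it follows that $\langle\sigma,\alpha,\beta\rangle=\Alt(p^3-1)$.

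For (ii), $\rho$ comes from the cyclic variable shift on seven variables, and $\gamma$ corresponds to the commuting product $\tau_1^{(1)}\mu$, where $\mu\in\Aut(\F_p[x_1,\dots,x_7])$ is the polynomial transvection $x_4\mapsto x_4+x_6^2$ (which commutes with $\tau_1^{(1)}$ because their supports are disjoint). The main technical task is to disentangle the two factors of $\gamma$ within $\langle\rho,\gamma\rangle$. The plan is to form the conjugates $\gamma_i=\rho^i\gamma\rho^{-i}=\tau_{1+i}^{(1)}\mu_i$ (indices mod~$7$) and to compute their commutators: the $\tau^{(1)}$-factors pairwise commute and commute with any $\mu_j$ whose support is disjoint from that of $\tau_{1+i}^{(1)}$, whereas $[\mu_i,\mu_j]$ is a computable non-transvection element only for a few overlapping index pairs. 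A careful book-keeping of these commutators and their conjugates by powers of $\rho$ extracts enough pure polynomial transvections so that, together with $\rho$, one recovers a subgroup containing $G_{\F_p,7;e_1,\dots,e_7}$ for a suitable tuple with $e_1\cdots e_7=2$, and hence containing $\Alt(p^7-1)$. Since $\rho$ and $\gamma$ are even permutations, equality follows. (Alternatively, once sufficiently many independent transvections are available, one may verify $4$-transitivity directly and invoke the CFSG-free bound of Pyber mentioned after Theorem~\ref{thm:Alt-quotient-intro}.)

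For the expander claims, the generating sets lift verbatim to $\Aut(\Z[1/30][x_1,\dots,x_n])$, and Theorem~\ref{thm:T-bis} supplies property~(T) for the group they generate. Reduction modulo any prime $p\geq 7$ factors through the permutation actions on $\F_p^n\setminus\{0\}$ established above, so the families of Cayley graphs of $\Alt(p^3-1)$ (resp.\ $\Alt(p^7-1)$) with respect to $\{\sigma,\alpha,\beta\}$ (resp.\ $\{\rho,\gamma\}$) are expander families of degrees~$6$ (resp.~$4$). The main obstacle is the disentangling step in~(ii): the transvection $\mu$ is ``non-standard'' in that it links non-adjacent variables $x_4,x_6$ with exponent~$2$, so the commutator calculus has to be done explicitly, and one must also verify that the two-generator subgroup $\langle\tilde\rho,\tilde\gamma\rangle$ inside $\Aut(\Z[1/30][x_1,\dots,x_7])$ is genuinely the Kazhdan group furnished by Theorem~\ref{thm:T-bis}, since a priori property~(T) need not pass to arbitrary subgroups.
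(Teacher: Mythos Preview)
Your generation arguments are broadly in the right spirit, and for part~(i) they match the paper's proof closely (though note that Corollary~\ref{cor:Alt-base-field} with $E=2$ already covers every odd prime, not just $p\geq 7$). For part~(ii) your plan is vaguer than necessary: the paper computes the single commutator $\tau=[\gamma,\rho\gamma\rho^{-1}]$ directly and finds that it acts as $(x_1+x_3,x_2,\dots,x_7)$, so that $\langle\tau,\rho\rangle$ already contains $\SL_7(\F_p)$; from there one extracts the pure quadratic transvection and hence $G_{\F_p,7;2,1,\dots,1}$. This is cleaner than your proposed ``careful book-keeping'' of all pairwise commutators, but your route would also work once written out.

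The genuine gap is in your expander argument. You propose to lift $\sigma,\alpha,\beta$ (resp.\ $\rho,\gamma$) to $\Aut(\Z[1/30][x_1,\dots,x_n])$ and appeal to Theorem~\ref{thm:T-bis}. But that theorem gives property~(T) for the group generated by $\EL_n(\Z[1/30])$ together with $\trans 1221$, whereas your lifted generators only produce $\SL_n(\Z)$ on the linear side, not $\EL_n(\Z[1/30])$. So the lifted group is a \emph{proper} subgroup of $G_{n,\Z[1/30]}$, and property~(T) does not pass to subgroups. You flag this as an obstacle at the end, but it is in fact fatal: the paper explicitly states, right after Theorem~\ref{thm:expanders-intro}, that it is \emph{not known} whether the groups $\langle\sigma,\alpha,\beta\rangle$ and $\langle\rho,\gamma\rangle$ admit a common Kazhdan cover.

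The paper circumvents this by arguing representation-by-representation rather than through a single covering group. Given an almost-invariant vector $v$ for $\{\sigma,\alpha,\beta\}$ in the permutation representation, one first uses property~(T) for $\SL_3(\Z)$ (which surjects onto $\langle\sigma,\alpha\rangle=\SL_3(\F_p)$) to make $v$ almost invariant under all of $\SL_3(\F_p)$, hence under the full generating set $S=\bigcup_i X_i$ of $G_{\F_p,3;2,1,1}$. Then one invokes Theorem~\ref{thm:T-for-G}, whose Kazhdan constant for $(G_{\F_p,3;2,1,1},S)$ is bounded below \emph{uniformly in $p$}. Since that group surjects onto $\Alt(p^3-1)$, this forces $\varepsilon$ to be bounded away from zero. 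The point is that one chains together two uniform Kazhdan-constant estimates (for $\SL_3(\Z)$ and for the family $G_{\F_p,3;2,1,1}$) rather than invoking property~(T) for a single group.
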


Although the proof of that result relies on the methods introduced to prove Theorems~\ref{thm:T-intro} and~\ref{thm:Alt-quotient-intro},  we do not know if the groups generated by $\sigma, \alpha, \beta$ (resp. $\rho, \gamma$) have a common cover with Kazhdan's property~(T). 

Let  
us briefly compare those results with the corresponding results for $\Aut(F_n)$ mentioned above. In the case of  $\Aut(F_n)$, the existence of infinitely many alternating group quotients is  established by a direct argument in~\cite{Gilman}; it does not rely on the CFSG (which was actually not available at the time). 
The fact that $\Aut(F_n)$ has property (T) for $n \geq 4$ is a recent achievement building upon Ozawa's criterion~\cite{O}, that relies on a fair amount of computer calculations, see~\cite{KNO, KKN, N}. For the groups   $G_{\F_p,( e_1, e_2, \dots, e_n)}$ considered in this paper, the proof of property (T) is a rather straightforward consequence of known criteria from~\cite{EJ, K}. The construction of finite quotients that are realized as multiply transitive permutation groups (of arbitrarily large degree) is elementary and self-contained, though it is quite technical at some places.

This family of expander Cayley graphs for alternating groups cover a  ``denser'' set of degrees, compared to the expanders obtained by using the fact that  $\Aut(F_4)$ has property (T). Indeed, the latter group  has a family of alternating  group quotients in degree approximately equal to $p^{12}$, where  $p$ is prime.

It should be noted that once one constructs  expanding generating sets  (of bounded size) for a family  $\Alt(n_k)$ of alternating groups, where $n_k$ grows not faster than exponentially, it is possible to modify the construction to obtain expanding generating sets  (of bounded size) for all alternating groups.

\subsection*{Acknowledgements}

We thank Jack Button and Piotr Przytycki for their comments on a preliminary version of this paper.  We are grateful to Greg Kuperberg for pointing out a similarity between the
expanding generating sets from Theorem~\ref{thm:expanders-intro} and the Toffoli gate in reversible computing. We also  thank the anonymous referee for numerous suggestions which significantly improved the  the paper.

\section{Preliminaries}

\subsection{A family of nilpotent groups}
\label{subsec:nilpotentgroups}

Let $R$ be a 
commutative ring%
\footnote{All rings considered in the paper are associative with a unit.}
with a unit, and $c \geq 0$ be an integer. We let $R[x]$ be the $R$-module consisting of the polynomials in the indeterminate $x$ with coefficients in $R$, and $R[x]_{\leq c} \cong R^{c+1}$ be the submodule consisting of the polynomials of degree at most~$c$. 

For each $r \in R$, we consider the automorphism $y(r) : R[x] \to R[x]$ acting by a change of coordinates, i.e., $y(r)\big(P(x)\big) = P(x+r)$ . Clearly $y(r)$ preserves $R[x]_{\leq c}$, and the map $r \mapsto y(r)$ is an injective homomorphism of $R$ to $\Aut(R[x]_{\leq c})$, so that the semi-direct product 
$$
\Gamma_{c,R} =   R[x]_{\leq c} \rtimes R \cong  R^{c+1} \rtimes R
$$ 
is well defined. 
Once $c$ is fixed, we set   $X_i(R) = \{r x^{c-i} \mid r \in R\} \cong R$ for all  $i\in \{0, 1, \dots, c\}$ and  $Y(R) = \{ y(r) \mid r \in R\} \cong R$, which we view as subgroups of $\Gamma_{c, R}$.

\begin{prop}
\label{prop:generationGamma}
The group $\Gamma_{c,R}$ is nilpotent of class at most $c+1$.  If  $c!$ is a nonzero element in $R$, then the   nilpotency class  
is exactly $c+1$. 
In addition, if $c!$ is invertible in $R$, then the center of $\Gamma_{c, R}$ is $X_c(R)$,  and   $\Gamma_{c,R} = \big\langle X_0(R) \cup Y(R)\big\rangle$.
\end{prop}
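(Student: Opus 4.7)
The plan is to unpack the multiplication in $\Gamma_{c,R} = N \rtimes R$ (with $N := R[x]_{\leq c}$) and exploit the fact that translation $y(r)$ acts on $N$ through the finite-difference operator $\Delta_r := y(r) - 1$, which strictly lowers degree. First I would write elements as $(P, r)$ with $P \in N$ and compute directly
\[ [(P_1, r_1), (P_2, r_2)] = \bigl( (y(r_1)-1)P_2 - (y(r_2)-1)P_1,\; 0 \bigr). \]
In particular $[\Gamma_{c,R}, \Gamma_{c,R}] \subseteq N$, and for $Q \in N$ and $r \in R$ one has $[y(r), Q] = \Delta_r Q$. By induction on $k$, this gives $\gamma_k(\Gamma_{c,R}) \subseteq R[x]_{\leq c-k+1}$ for $k \geq 2$, so $\gamma_{c+2} = 0$ and the class is at most $c+1$. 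For sharpness when $c! \neq 0$ in $R$, the explicit identity $\Delta_1^{c}(x^c) = c!$ gives a nonzero constant in $\gamma_{c+1}(\Gamma_{c,R})$.

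Now suppose $c!$ is invertible in $R$. Then each $k \in \{1, \dots, c\}$ is invertible in $R$, since $k$ divides $c!$ as an integer. For the center, the inclusion $X_c(R) \subseteq Z(\Gamma_{c,R})$ is immediate because $y(r)$ fixes every constant polynomial. Conversely, if $(P, r)$ is central, plugging $(Q, s) = (x, 0)$ into the commutator formula forces $r = 0$, and then $\Delta_s P = 0$ in $R[x]$ for every $s \in R$. Writing $P = \sum_{k=0}^c a_k x^k$ and reading off the coefficient of $x^{c-1}$ in $\Delta_1 P$ gives $c\, a_c = 0$, whence $a_c = 0$ since $c$ is invertible; iterating downward on $k$ yields $a_k = 0$ for all $k \geq 1$, so $P$ is a constant.

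Finally, set $H := \langle X_0(R) \cup Y(R)\rangle$. Conjugating $(ax^c, 0)$ by $y(r)$ shows that $H \cap N$ contains $a(x+r)^c$ for all $a, r \in R$ and, being stable under $Y$-conjugation, is closed under $\Delta_1$. Iterated application gives
\[ \Delta_1^{c-m}(a x^c) \;=\; \tfrac{c!}{m!}\, a\, x^m \;+\; (\text{polynomial of degree}<m) \;\in\; H \cap N. \]
Since $c!/m!$ is invertible for every $0 \leq m \leq c$, an upward induction on $m$ (starting from the base case $m=0$, where $\Delta_1^c(a x^c) = a\, c!$ ranges over all of $R$) lets us subtract previously-obtained lower-degree terms and conclude $b x^m \in H \cap N$ for every $b \in R$. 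Hence $H \cap N = N$ and, combined with $Y(R) \subseteq H$, this gives $H = \Gamma_{c,R}$.

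The main technical obstacle is handling an arbitrary commutative ring $R$ rather than a field: one cannot use Vandermonde invertibility or evaluation at $c+1$ distinct points, so the argument must manipulate degrees directly through iterated difference operators. The invertibility of $c!$ is precisely what sustains the cascade of leading-coefficient extractions both for pinning down the centre and for generating $N$ from $X_0(R)$.
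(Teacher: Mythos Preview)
Your proof is correct and follows essentially the same approach as the paper: both arguments rest on the observation that commutation with $y(s)$ acts on $N=R[x]_{\leq c}$ as the finite-difference operator, which lowers degree and scales the leading coefficient by the degree, so that iterated commutators produce the descending factorial $c(c-1)\cdots$. Your framing via $\Delta_r = y(r)-1$ is a clean packaging of the paper's binomial computation, and you spell out the generation step (upward induction peeling off leading terms $\tfrac{c!}{m!}\,ax^m$) more explicitly than the paper, which simply says ``a similar argument''; the only cosmetic discrepancy is a sign in your general commutator formula, which depends on the semidirect-product convention and does not affect any of the subsequent steps.
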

\begin{proof}
For $n \in \{0, 1, \dots, c\}$ and $r \in R$, we let $P_n(r) = rx^{c-n} \in R[x]_{\leq c}$ and view it as an element of $\Gamma_{c, R}$. For all $n \in \{0, 1, \dots, c\}$  and $r, s \in R$, we compute the commutator 
\begin{align*}
    [P_{c-n}(r), y(s)] & = 
    -rx^n +  r (x+s)^n\\
    &= -rx^n +  r  \sum_{i=0}^n \binom n i   s^i x^{n-i}\\
    &= r\bigg(ns x^{n-1} + \binom n 2 s^2 x^{n-2} + \dots + n s^{n-1} x + s^n\bigg)\\
    & \in R[x]_{\leq n-1}.
\end{align*}
It follows that $[R[x]_{\leq n}, Y(R)] \leq R[x]_{\leq n-1}$. The groups $R[x]_{\leq c}$ and $Y(R)$ being both abelian, we have  $[\Gamma_{c, R}, \Gamma_{c, R}] = [R[x]_{\leq c}, Y(R)]$, and we infer that $\Gamma_{c, R}$ is indeed nilpotent of class $\leq c+1$. 

The computation above shows that the $k$-fold iterated commutator
$$[\dots\! [[P_0(1), y(1)], y(1)],\dots, y(1)]$$
is a polynomial of degree~$c-k$, and the coefficient of its leading term is equal to $c(c-1)\dots (c-k+1)$. Therefore, if  $c! \neq 0$ in the ring $R$, it follows that the $c+1$-st term of the lower central series of $\Gamma_{c, R}$ contains a non-trivial element, which is actually contained in $X_c(R)$. It follows that the nilpotency class of $\Gamma_{c, R}$ is at least $c+1$, hence it is equal to $c+1$ by the first part of the proof. 

If $c!$ is invertible in $R$, then $n$ is invertible for all $n\in \{1, \dots, c\}$. Given a polynomial $P \in R[x]_{\leq c}$ with degree $d \geq 1$ and leading coefficient $r$, the leading coefficient of $[P, y(1)]$ is $rd \neq 0$. Therefore the center of $\Gamma_{c, R}$ is contained in 
$X_c(R)$, and thus equal to $X_c(R)$ by the first part of the proof above. 

The fact that $\Gamma_{c,R} = \langle X_0(R)  \cup Y(R) \rangle$ when $c!$ is invertible follows by a similar argument. 
\end{proof}

\begin{remark}\label{rem:Comm_Rel_Gamma}
For the sake of future references, we record that, for all $n \in \{0, 1, \dots, c\}$ and all $r, s \in R$, the commutation relation
$$[P_{c-n}(r), y(s)]  
= \sum_{i=1}^n \binom n i    P_{c-n+i}(rs^i)$$
has been established in the course of the proof of Proposition~\ref{prop:generationGamma}.
\end{remark}

\begin{observation}
One can view the groups $\Gamma_{c,R}$ as the $R$ points of a nilpotent group scheme $\Gamma_c$, which is defined over $\Z$, and can be evaluated over all commutative rings. 
In the case $c=1$ the group $\Gamma_{1,R}$ is the Heisenberg group over $R$, which can be viewed as a  maximal unipotent subgroup of $\SL_3(R)$. 
In the case $c=2$ the group $\Gamma_{2,R}$ is isomorphic to a maximal unipotent subgroup of $\Sp_4(R)$. Finally, when $c=3$, the group is
a quotient of the maximal unipotent subgroup of the simple group of type $G_2$ by its center. This  observation will not be needed in the rest of the paper. 
\end{observation}

\subsection{Kazhdan's property (T) and Kazhdan constants}

Let us briefly recall the definition of Kazhdan's property (T). For more details and background, we refer to \cite{BHV}. 

Let $G$ be a discrete group and $(\pi, V)$ be a unitary representation of $G$. Given a subset $Q \subset G$ and a real $\epsilon >0$, a vector $v \in V$ is said to be \textbf{$(Q, \epsilon)$-invariant} if $\sup_{g \in Q} \| \pi(g)v - v\| < \epsilon \| v \|$. We say that $(\pi, V)$ has \textbf{almost invariant vectors} if for all finite subset $Q \subset G$ and all $\epsilon >0$, there is a $(Q, \epsilon)$-invariant vector. We say that $G$ has \textbf{Kazhdan's property (T)} if every unitary representation of $G$ with almost invariant vectors has a non-zero invariant vector. 

Let $Q \subset G$ be a finite subset. Given a unitary representation $\pi$ of $G$, the \textbf{Kazhdan constant associated with $Q$ and $\pi$} is defined as  
$$\kappa(G, Q, \pi) = \inf\, \left\{\max_{g \in Q} \|\pi(g)v - v \| : v \in V, \ \|v\|=1\right\}.$$ 
Thus $\pi$ has almost invariant factors if and only if $\kappa(G, Q, \pi) = 0$ for all finite subsets $Q \subset G$. The infimum of $\kappa(G, Q, \pi) $ taken over all equivalence classes of unitary representations $\pi$ without any non-zero invariant vector is called the \textbf{Kazhdan constant} associated with $Q$. It is denoted by 
$\kappa(G, Q)$. We end this subsection by recording a useful consequence of the definitions. 

\begin{lem}\label{lem:almost-invariant}
Let $G$ be a group and $Q \subset G$ be a generating set $Q$ such that $\kappa(G, Q) >0$. Let $\varepsilon >0$. For any unitary representation $(\pi, V)$ of $G$, each $(Q, \varepsilon)$-invariant unit vector $v \in V$ is also $(G, \frac{2}{\kappa(G, Q)} \varepsilon)$-invariant. 
\end{lem}
\begin{proof}
Let $v = v_0 + v_1$ be the decomposition of $v$ according to the $G$-invariant decomposition $V = V^G \oplus (V^G)^\perp$. If $v_1=0$, then $v$ is $G$-invariant and we are done. We assume henceforth that $v_1 \neq 0$. Since the orthogonal projection map $V \to (V^G)^\perp$ is $1$-Lipschitz, it follows that $v_1$ is $(Q, \varepsilon)$-invariant, hence the unit vector $\frac{v_1}{\| v_1\|}$ is $(Q, \frac{\varepsilon}{\| v_1\|})$-invariant. Since $G$ does not have non-zero invariant vector in $(V^G)^\perp$, we have $\frac{\varepsilon}{\| v_1\|} \geq \kappa(G, Q)>0$, hence $\|v_1\| \leq  \varepsilon/\kappa(G, Q)$. Since $v_0$ is fixed by $G$, we have $\|\pi(g)(v)-v\| = \|\pi(g)(v_1)-v_1\| \leq 2\|v_1\|$ for all $g \in G$. The result follows.  
\end{proof}

\subsection{The representation angle}\label{sec:rep-angle}

We now review a method for proving that a given group has Kazhdan's property (T), and for estimating the Kazhdan constant with respect to a suitable generating set. The genesis of that method goes back to the work by Dymara--Januszkiewicz~\cite{DJ}. It was formalized by Ershov--Jaikin-Zapirain~\cite{EJ}, and improved by Kassabov~\cite{K}. 
The method is based on the notion of the \textbf{angle} formed by subspaces in a Hilbert space. The definitions are as follows.

Let $V$ be a Hilbert space and $V_1,V_2$ be closed subspaces with $V_1 \cap V_2 = \{0\}$. If  $V_1 \neq \{0\} \neq V_2$, the \textbf{angle} formed by $V_1$ and $V_2$, denoted by $\sphericalangle (V_1, V_2)$, is the unique $\alpha \in [0, \pi/2]$ such that $\cos(\alpha) = \sup \{|\langle v_1, v_2\rangle| : v_i \in V_i, \|v_i\|=1\}$.   If $V_1=\{0\}$ or $V_2 = \{0\}$ then we set $\sphericalangle (V_1, V_2) = \frac \pi 2$. 

Observe that $\cos(\alpha) = \sup \{|\mathrm{Re}(\langle v_1, v_2\rangle)| : v_i \in V_i, \|v_i\|=1\}$. This means that $\sphericalangle (V_1, V_2)$ could equivalently be defined in terms of the geometry of the real Hilbert space $V_{\mathbf R}$, which the real vector space $V$ endowed with the inner product $(v, w) = \mathrm{Re}(\langle v, w\rangle)$.

Let now $H$ be a group and  $X, Y \leq H$ be a pair of subgroups such that $H = \langle X \cup Y \rangle$. Given a unitary representation $(\pi, V)$ of $H$ without any non-zero invariant vectors, we denote by $V^X$ and $V^Y$ be the subspaces of $X$- and $Y$-invariant vectors, and we set 
$$
\sphericalangle_\pi (H; X, Y) = \sphericalangle (V^X, V^Y).
$$ 
The infimum of $\sphericalangle_\pi (H; X, Y)$ taken over all equivalence classes of unitary representations $\pi$ without any non-zero invariant vector is called the \textbf{representation angle} associated with $X, Y$. It is denoted by 
$$\sphericalangle (H; X, Y).$$
By the spectral interpretation of the representation angle (see ~\cite{K}), that quantity   behaves well under direct sums: we have $$\sphericalangle_{\oplus \pi_i} (H; X, Y) = \min_i  \sphericalangle_{\pi_i} (H; X, Y).$$
In particular, if $H$ is finite, then the representation angle $\sphericalangle (H; X, Y)$ coincides with the infimum of $\sphericalangle_\pi (H; X, Y)$ taken over all equivalence classes of non-trivial irreducible representations of $H$. 

We also set
$$\varepsilon (H; X, Y) = \cos\big(\sphericalangle (H; X, Y)\big).$$

As mentioned earlier, the machinery initiated in~\cite{DJ} shows that if a group $G$ is generated by a collection $X_1, \dots, X_n$ of finite subgroups  such that 
$$\varepsilon_{ij} =\varepsilon (\langle X_i, X_j \rangle; X_i, X_j) < 2^{-n-1}$$ 
for all $i\not = j$ then the group $G$ has property (T). An explicit, and numerically efficient, incarnation of this phenomenon is provided by 
Theorem 1.2 from \cite{K}, which shows that if the symmetric matrix  
$$
A = \left(
\begin{array}{ccccc}
1 & -\varepsilon_{12} & -\varepsilon_{13} & \dots & -\varepsilon_{1n} \\ 
-\varepsilon_{21} & 1 & -\varepsilon_{23} & \dots & -\varepsilon_{2n} \\
-\varepsilon_{31} & -\varepsilon_{32} & 1 & \dots & -\varepsilon_{3n} \\
\vdots & \vdots & \vdots &  \ddots & \vdots  \\ 
-\varepsilon_{n1} & -\varepsilon_{n2} & -\varepsilon_{n3} & \dots & 1 
\end{array}
\right)
$$
is positive definite, then the group $G$ has property (T). Moreover the Kazhdan constant of $G$
with respect to the generating set  $Q = \bigcup_i X_i$ is related to the smallest eigenvalues of that matrix.

The positive definiteness and the smallest eigenvalue of a symmetric real matrix with non-positive off-diagonal entries can be studied with the tools from \cite[\S4.0--\S4.5]{Kac} that are based on convexity arguments. The set up from loc. cit. also requires the matrix to be indecomposable%
\footnote{The matrix is \textbf{indecomposable} if it can not be conjugated to a block diagonal matrix by a permutation matrix.}, 
which is typically the case for the matrix $A$ above. We illustrate this with the following special case, which is the most relevant for our purposes. 

\begin{prop}\label{prop:pos-def-matrix}
Let $n \geq 2$ be an integer, let $\alpha_1, \dots, \alpha_n>0$ be positive real numbers, and consider the symmetric $n \times n$ matrix
$$
A = \left(
\begin{array}{cccccc}
1 & -\alpha_1 & 0 & \dots & 0 & -\alpha_n \\ 
-\alpha_1 & 1 & -\alpha_2 & \dots & 0 & 0 \\
0 & -\alpha_2 & 1 & \dots & 0 & 0 \\
\vdots & \vdots & \vdots &  \ddots & \vdots & \vdots  \\ 
0 & 0 & 0 & \dots &  1 & -\alpha_{n-1} \\
-\alpha_n & 0 & 0 & \dots & -\alpha_{n-1} & 1 
\end{array}
\right).
$$
Let $\lambda_{\min}$ denote the smallest eigenvalue of $A$ and set
$$
M = \max\big\{\alpha_i + \alpha_{i+1} \mid i =1, \dots, n\big\},
$$ 
where the indices are taken modulo~$n$.  The following assertions hold. 
\begin{enumerate}[label=(\roman*)]
    \item  $\lambda_{\min} \geq 1 - M$. In particular, if $M < 1$ then $A$ is positive definite.
    
    \item $\lambda_{\min} = 1 - M$ if and only if $\alpha_i = \alpha_{i+2}$ for all $i$ (modulo $n$). 
    
    \item Assume that $n \geq 4$ and that $\alpha_1 = \dots = \alpha_{n-1} = \alpha$ and $\alpha_n = \beta$. Then $\lambda_{\min} \geq \lambda$ for each $\lambda \in \mathbf R$ satisfying $\lambda \leq 1-2\alpha$, $\lambda < 1-\beta$ and
    $$\alpha^2 \leq (1-\alpha-\lambda)(1-\beta - \lambda).$$
    In particular, if $\alpha < \frac 1 2$, $\beta < 1$ and $\alpha^2 < (1-\alpha)(1-\beta)$ then $A$ is positive definite. 
\end{enumerate}
\end{prop}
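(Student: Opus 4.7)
The plan is to work directly with the quadratic form $x^T (A - \lambda I) x$. Parts (i) and (ii) will rely on the arithmetic–geometric mean inequality, while part (iii) requires an explicit sum-of-squares decomposition that isolates the role of the special coefficient $\beta$.

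For (i), starting from $x^T A x = \sum_i x_i^2 - 2 \sum_i \alpha_i x_i x_{i+1}$ (indices modulo $n$), the pointwise bound $2 x_i x_{i+1} \leq x_i^2 + x_{i+1}^2$ combined with $\alpha_i > 0$ and the definition of $M$ will yield $x^T A x \geq (1-M)\|x\|^2$. For (ii), tracing when equality is achieved in this chain, the term-by-term equality $2 v_i v_{i+1} = v_i^2 + v_{i+1}^2$ forces $v_i = v_{i+1}$ for every $i$, so any minimiser must be a nonzero constant vector; equality in $\sum(\alpha_{i-1}+\alpha_i) v_i^2 \leq M\|v\|^2$ then forces $\alpha_{i-1}+\alpha_i = M$ for every $i$, equivalently $\alpha_i = \alpha_{i+2}$ modulo $n$. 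Conversely, under that hypothesis every row of $A$ sums to $1-M$, so the all-ones vector is an eigenvector with eigenvalue $1-M$; combined with (i) this gives $\lambda_{\min} = 1-M$.

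For (iii), set $d = 1-\lambda$. The key identity will be the decomposition
\begin{equation*}
x^T (A - \lambda I) x \;=\; \alpha \sum_{i=2}^{n-2} (x_i - x_{i+1})^2 + (d - 2\alpha) \sum_{i=3}^{n-2} x_i^2 + Q(x_1, x_2, x_{n-1}, x_n),
\end{equation*}
where $Q$ is the quadratic form associated with the $4\times 4$ matrix
\begin{equation*}
K_4 \;=\; \begin{pmatrix} d & -\alpha & 0 & -\beta \\ -\alpha & d-\alpha & 0 & 0 \\ 0 & 0 & d-\alpha & -\alpha \\ -\beta & 0 & -\alpha & d \end{pmatrix}.
\end{equation*}
The first two summands are non-negative under $d \geq 2\alpha$ (i.e.\ $\lambda \leq 1-2\alpha$), so it suffices to show $K_4 \succeq 0$. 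Because $K_4$ commutes with the involution $(x_1, x_2, x_{n-1}, x_n) \mapsto (x_n, x_{n-1}, x_2, x_1)$, its symmetric and antisymmetric eigenspaces produce a block diagonalisation into the $2\times 2$ matrices
\begin{equation*}
\begin{pmatrix} d-\beta & -\alpha \\ -\alpha & d-\alpha \end{pmatrix}
\quad \text{and} \quad
\begin{pmatrix} d+\beta & -\alpha \\ -\alpha & d-\alpha \end{pmatrix}.
\end{equation*}
The assumptions $d > \beta$ and $(d-\alpha)(d-\beta) \geq \alpha^2$ make the first block positive semidefinite, and the second block is then automatic since $(d-\alpha)(d+\beta) \geq (d-\alpha)(d-\beta) \geq \alpha^2$. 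The ``In particular'' clause follows by applying the statement with $\lambda$ a small positive real and invoking the strict inequalities in the hypothesis.

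The main obstacle in (iii) is locating the right SOS decomposition. The naive choice $\alpha \sum_{i=1}^{n-1}(x_i-x_{i+1})^2$, which spends every $\alpha$-edge on a squared difference, leaves behind the residual $2\times 2$ block $\bigl(\begin{smallmatrix} d-\alpha & -\beta \\ -\beta & d-\alpha \end{smallmatrix}\bigr)$ and only yields the weaker linear condition $d \geq \alpha + \beta$. Keeping the two $\alpha$-edges incident to the endpoints of the $\beta$-edge inside a larger residual, and then diagonalising this $4\times 4$ block along the $\Z/2$-symmetry that it inherits from $A$, is precisely what upgrades the condition to the sharper quadratic form $(d-\alpha)(d-\beta) \geq \alpha^2$.
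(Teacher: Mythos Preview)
Your proof is correct but follows a genuinely different route from the paper. The paper appeals to the Perron--Frobenius-type machinery in \cite[\S4]{Kac}: for a symmetric indecomposable matrix with nonpositive off-diagonal entries, the existence of a strictly positive vector $v$ with $(A-\lambda I)v \geq 0$ forces $\lambda_{\min} \geq \lambda$, with equality only when $(A-\lambda I)v = 0$. Parts (i) and (ii) then follow from the test vector $v=(1,\dots,1)^\top$, and part (iii) from the symmetric test vector $w=(s,1,\dots,1,s)^\top$; solving the inequalities $(A-\lambda I)w \geq 0$ for $s>0$ yields exactly the same condition $\alpha^2 \leq (d-\alpha)(d-\beta)$ that you obtain. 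Your argument, by contrast, is entirely self-contained: AM--GM handles (i)--(ii), and for (iii) you peel off the interior $\alpha$-edges as squares and reduce to a $4\times 4$ residual which you block-diagonalise along the $\mathbf Z/2$-symmetry. The paper's approach is shorter once one is willing to cite Kac, and the test-vector formulation generalises easily to other sparsity patterns; your approach avoids the external reference and makes explicit why the symmetric ansatz $w=(s,1,\dots,1,s)^\top$ is the right one --- it is precisely the Perron vector of your symmetric $2\times 2$ block. The two arguments are essentially dual manifestations of the same inequality.
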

\begin{proof}
By hypothesis, the matrix $A$ is symmetric, indecomposable with non-positive off-diagonal entries, so the the conditions from \cite[\S4.0]{Kac} are satisfied. Moreover, the same holds for the matrix $A - \lambda I$ for each $\lambda \in \mathbf R$. Given a vector $v= (v_1, \dots, v_n)^\top \in \mathbf R^n$, we write $v >0$ (resp. $v \geq 0$) if $v_i >0$ for all $i$ (resp. $v_i \geq 0$ for all $i$).  

Assume that there exists $v >0$ in $\mathbf R^n$ with $(A-\lambda I)v \geq 0$. We may then invoke \cite[Theorem~4.3 and Lemma~4.5]{Kac}, showing that two cases can occur:  either $A-\lambda I$ is positive definite, or $A-\lambda I$ is semi-positive definite of rank~$n-1$ and $(A-\lambda I)v = 0$. In both cases we have that $A-\lambda I$ is semi-positive definite, i.e., $\lambda_{\min} \geq  \lambda$. 

Applying this observation to the vector $v= (1, 1, \dots, 1)^\top >0$ and the scalar $\lambda = 1-M$, the assertion (i) follows. 

Moreover, if we assume in addition that $\lambda = 1-M$ is an eigenvalue of $A$, then $A-\lambda I$ is not positive definite, so that we must have $(A-\lambda I)v = 0$ by the above. By the definition of $v$, this implies that $\alpha_i = \alpha_{i+2}$ for all $i$ (modulo $n$). Conversely, if $\alpha_i = \alpha_{i+2}$ for all $i$, then $1-M$ is an eigenvalue of $A$ with eigenvector $v$, so that $\lambda_{\min} = 1-M$ by (i). This proves (ii). 

To prove (iii), we apply the same argument as above, this time with the vector $w= (s, 1, \dots,1, s)^\top$ for some $s > 0$ which remains to be determined. We compute that for any $\lambda \in \mathbf R$, we have $(A-\lambda I)w \geq 0$ if and only if 
$$\left\{
\begin{array}{rcl}
s(1-\beta - \lambda)     &  \geq & \alpha\\
s\alpha      & \leq & 1-\alpha - \lambda\\
\lambda & \leq & 1-2\alpha
\end{array}
\right.$$
We see that a real number $s >0$ satisfying those inequalities exists provided $\lambda$ satisfies $\lambda \leq 1-2\alpha$, $\lambda < 1-\beta$ and $\alpha^2 \leq (1-\alpha-\lambda)(1-\beta - \lambda)$. If these three conditions hold, then we obtain $\lambda_{\min} \geq  \lambda$ by the first paragraph above. Clearly, if $\alpha < \frac 1 2$, $\beta < 1$ and $\alpha^2 < (1-\alpha)(1-\beta)$, then any sufficiently small 
$\lambda >0 $ satisfies those three inequalities, so that $A$ is indeed positive definite in that case.
\end{proof}

\subsection{Finite fields}

We need a few technical results about finite fields, which have relatively easy proofs. Throughout this section, we let $p$  denote a  prime. Given a field $k$ containing $\F_p$ and an element $\beta \in k$, we denote by $\F_p(\beta)$  the subfield of $k$   generated by the element $\beta$.

\begin{lem}
\label{lem:count}
Let $N\geq 1$ be an integer, let  $p> N$ be a prime and let $\F_q$ denote a finite field of order $q=p^l$. For each non-zero $\gamma \in \F_q$,   the proportion of those elements $\alpha \in \F_q$ such that 
$\F_q = \F_p(\alpha^N\gamma)$, is at least $1 - N /p$.
\end{lem}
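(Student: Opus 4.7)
The plan is to estimate directly the ``bad'' set
\[
B := \{\alpha \in \F_q : \F_p(\alpha^N\gamma) \subsetneq \F_q\}
\]
and show $|B| \leq Np^{l-1} = (N/p) q$. Since the lattice of subfields of $\F_q = \F_{p^l}$ is isomorphic to the divisor lattice of $l$, the maximal proper subfields of $\F_q$ are exactly the $\F_{p^{l/r}}$ for $r$ ranging over the prime divisors of $l$. Consequently
\[
B = \bigcup_{\substack{r \mid l \\ r \text{ prime}}} B_r, \qquad B_r := \{\alpha \in \F_q : \alpha^N\gamma \in \F_{p^{l/r}}\}.
\]

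Next I would bound each $|B_r|$ by a fibre count. Because the map $\alpha \mapsto \alpha^N$ on $\F_q^\times$ has fibres of size $\gcd(N, q-1) \leq N$, the equation $\alpha^N\gamma = \beta$ has at most $N$ solutions in $\F_q^\times$ for each nonzero $\beta \in \F_{p^{l/r}}$, while $\beta = 0$ is the image only of $\alpha = 0$. This gives $|B_r| \leq 1 + N(p^{l/r} - 1) \leq N p^{l/r}$, and a union bound yields
\[
|B| \leq N \sum_{\substack{r \mid l \\ r \text{ prime}}} p^{l/r}.
\]
The proof then reduces to the elementary inequality $\sum_{r\mid l,\, r\text{ prime}} p^{l/r} \leq p^{l-1}$, which is vacuous for $l = 1$, holds with equality for $l = 2$, reduces to $p \leq p^{l-1}$ for $l \geq 3$ prime, and for composite $l \geq 4$ follows by bounding each summand by $p^{l/2}$ together with $\omega(l) \leq \log_2 l \leq p^{l/2 - 1}$.

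The only subtle point is the tightness of the bound at $l = 2$: the single element $\alpha = 0$ must be counted once overall rather than with an $N$-fold multiplicity attached to the unique proper subfield, in order to close the inequality at the boundary. Beyond this bookkeeping, the argument is a routine combination of union bound and fibre counting, and I do not foresee any substantial obstacle.
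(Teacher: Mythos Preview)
Your proof is correct and follows essentially the same approach as the paper: bound the bad set by counting, for each proper subfield $F \subsetneq \F_q$, the solutions of $\alpha^N\gamma \in F$ (at most $N|F|$), and then control the resulting sum. The only cosmetic difference is that the paper sums over \emph{all} proper divisors $l'$ of $l$ and bounds by the full geometric series $\sum_{l'=1}^{\lfloor l/2 \rfloor} Np^{l'} \leq \frac{N}{p}p^l$, whereas you sum only over the maximal proper subfields $\F_{p^{l/r}}$ and then handle the inequality $\sum_{r\mid l,\ r\text{ prime}} p^{l/r} \leq p^{l-1}$ by a short case split; your remark about counting $\alpha=0$ only once is in fact unnecessary, since $1 + N(p^{l/r}-1) \leq Np^{l/r}$ already closes the bound.
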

\begin{proof}
Without loss of generality we can assume that $l\geq 2$.
For each integer $l'$ dividing  $l$, the number of elements $\alpha \in \F_q$ such that $\F_p(\alpha^N\gamma) \subset \F_{p^{l'}}$ is at most $N p^{l'}$.
Therefore the number of $\alpha \in \F_q$ such that $\alpha^N \gamma$ does not generate $\F_q$ is at most
$$
\sum_{l' | l, \, l'<l} N p^{l'} \leq \sum_{l'=1}^{l/2 } N p^{l'} = N \frac{p^{\frac l 2+1} -p}{p-1} \leq \frac{N}{p} p^{l}.
$$
\end{proof}

\begin{lem}
\label{claim:enlarge}
Let $\alpha,\beta \in \overline{\F_p}$ with $\alpha \neq 0$,  
and 
$k, N$ be  integers with  $0\leq  k < N $.
Assume that $p > N $.

\begin{enumerate}[label=(\roman*)]
    \item There exists $\lambda \in \F_p(\alpha^N)$ such that  $|\F_p\big((\beta + \lambda \alpha^k)^N\big)| \geq | \F_p(\alpha^N)|$.
    
    \item If $\F_p(\alpha^N) \neq \F_p(\alpha^N,\beta^N,\alpha^{k}\beta^{N-1})$,
then there exists $\lambda \in \F_p(\alpha^N)$ such that $|\F_p\big((\beta + \lambda \alpha^k)^N\big)| > | \F_p(\alpha^N)|$.

\end{enumerate}
\end{lem}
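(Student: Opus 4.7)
Set $F := \F_p(\alpha^N)$, $q := |F| = p^l$, and $R(\lambda) := (\beta + \lambda\alpha^k)^N$. The approach is to introduce, for each integer $d \geq 1$, the auxiliary polynomial
\[
S_d(\lambda) \;:=\; (\beta^{p^d} + \lambda^{p^d}\alpha^{kp^d})^N - (\beta + \lambda\alpha^k)^N \in \overline{\F_p}[\lambda]
\]
(obtained by rewriting $R(\lambda)^{p^d} - R(\lambda)$ via Frobenius), whose roots in $\overline{\F_p}$ are exactly those $\lambda$ for which $R(\lambda) \in \F_{p^d}$. First I would note that the leading term of $S_d$ equals $\alpha^{kNp^d}\lambda^{Np^d}$, which is nonzero since $\alpha \ne 0$; hence $S_d$ is a nonzero polynomial of degree $Np^d$ and contributes at most $Np^d$ roots in $F$.

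For part (i), the bad $\lambda$'s are exactly those with $R(\lambda) \in \F_{p^d}$ for some $d < l$. Combining the per-$d$ bounds and using $p \geq N+1$ (so $N/(p-1) \leq 1$),
\[
\#\{\text{bad}\} \;\leq\; \sum_{d=1}^{l-1} Np^d \;=\; \frac{N(q-p)}{p-1} \;\leq\; q - p \;<\; q,
\]
producing some $\lambda \in F$ with $|\F_p(R(\lambda))| \geq q$.

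Part (ii) requires the additional exclusion of $R(\lambda) \in F$. For $\lambda \in F$ the relation $\lambda^q = \lambda$ lets one replace $S_l$ by
\[
\widetilde S_l(\lambda) \;:=\; (\beta^q + \lambda\alpha^{kq})^N - (\beta + \lambda\alpha^k)^N,
\]
a polynomial of degree at most $N$ in $\lambda$. Because $\alpha^{kN} = (\alpha^N)^k \in F^\times$ satisfies $(\alpha^{kN})^q = \alpha^{kN}$, the coefficient of $\lambda^N$ in $\widetilde S_l$ vanishes, so $\deg \widetilde S_l \leq N-1$. Next, using the factorization $X^N - Y^N = \prod_{\zeta^N=1}(X-\zeta Y)$ in $\overline{\F_p}[\lambda]$, I would show that $\widetilde S_l \equiv 0$ is equivalent to the linear polynomials $\beta^q + \lambda\alpha^{kq}$ and $\beta + \lambda\alpha^k$ being proportional by an $N$-th root of unity $\zeta$; comparing coefficients gives $\zeta = \beta^{q-1} = \alpha^{k(q-1)}$ with $\zeta^N = 1$, which rearranges to $\beta^N \in F$ and $\alpha^k\beta^{N-1} \in F$ (after noting that $\beta \neq 0$ is forced by the hypothesis). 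Hence the hypothesis of (ii) is precisely the condition $\widetilde S_l \not\equiv 0$, making it contribute at most $N-1$ roots in $F$. Combining with the count from part (i) then gives
\[
\#\{\text{bad}\} \;\leq\; (q-p) + (N-1) \;<\; q
\]
(using $N \leq p-1 < p$), which yields the required $\lambda$ with $|\F_p(R(\lambda))| > q$.

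The main technical hurdle will be the identification of the vanishing locus of $\widetilde S_l$ with the degenerate situation $\F_p(\alpha^N, \beta^N, \alpha^k\beta^{N-1}) = F$; once that equivalence is established, the remainder reduces to elementary degree-counting using $p > N$.
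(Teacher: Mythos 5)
Your proposal is correct and takes essentially the same route as the paper: for each field size $p^d$ one bounds the bad $\lambda$'s by the degree of the polynomial $R(\lambda)^{p^d}-R(\lambda)$ in $\lambda$, and for $d=l$ one uses $\lambda^q=\lambda$ to drop the degree to $N-1$ after the leading coefficient cancels because $\alpha^{kN}\in F$. The only stylistic difference is in verifying $\widetilde S_l\not\equiv 0$: the paper inspects the $\lambda^0$ and $\lambda^1$ coefficients directly (they equal, up to invertible scalars, the Frobenius-twist differences of $\beta^N$ and of $\alpha^k\beta^{N-1}$), whereas you factor $X^N-Y^N$ over $\overline{\F_p}$ and show that identical vanishing forces proportionality by an $N$-th root of unity; both arrive at the same characterization, and yours additionally yields the converse implication, which the paper does not need.
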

\begin{proof}
The proof consists of a counting argument. 
The strategy is the following:  if the the size of the field $\F_p\big((\beta + \lambda \alpha^k)^N\big)$ is sufficiently small, then $\lambda$ is a root of some equation of small degree. 
The sum of the degrees of all these equations is smaller than the size of $\F_p(\alpha^N)$, which shows that there exists an element $\lambda$ which is not a root of any of these equations.

Let $q =p^l =  |\F_p(\alpha^N)| $. We shall count the number of elements $\lambda \in \F_p(\alpha^N)$ such that the field generated by $\beta'^N$ has $p^t \leq q $ elements for some $t\leq l$, where $ \beta' = \beta + \lambda \alpha^k$.

If $|\F_p(\beta'^N)| = p^t$, it follows that $\beta'^{Np^t} = \beta'^N$. Substituting $\beta' = \beta + \lambda \alpha^k$ in the latter equality, we obtain  an equation  of degree $Np^t$ for $\lambda$, which has no more than $Np^t$ roots in $\F_p(\alpha^N)$. For $t=1, \dots, l-1$, we obtain at most $\sum_{t=1}^{l-1} N p^t = N \frac{p^l-p}{p-1}$ possible values for $\lambda$. Since $N \leq p-1$, we infer that there are at least $p$ values of $\lambda \in \F_p(\alpha^N)$ such that $|\F_p\big((\beta + \lambda \alpha^k)^N\big)| \geq | \F_p(\alpha^N)|$. This proves (i). 

In order to prove (ii), we need to evaluate the number of those $\lambda$'s such that  $|\F_p\big((\beta + \lambda \alpha^k)^N\big)| = | \F_p(\alpha^N)|$.  Hence, we now assume that $|\mathbf F_p(\beta'^N)| = |\mathbf F_p(\alpha^N)|$, so that $\beta'^{Nq} = \beta'^N$. We shall slightly modify the previous argument using that $\lambda^{q} = \lambda$, in order to obtain an equation of degree $N-1$ for $\lambda$ as follows. Observe that 
\begin{align*}
\beta'^{Np^l} & = \left(\beta'^{p^l}\right)^N \\
& = \left( \beta^{p^l} + \lambda^{p^l} \alpha^{kp^l}\right)^N \\
& =    \left( \beta^{p^l} + \lambda \alpha^{kp^l}\right)^N \\
& = \beta^{Np^l} + \dots +  \lambda^N \alpha^{kNp^l}
\end{align*}
and that
\begin{align*}
\beta'^N 
& = \left( \beta + \lambda \alpha^k\right)^N \\
&= \beta^{N} + \dots +  \lambda^N \alpha^{kN}.
\end{align*}
Recalling that $\beta'^{Np^l} = \beta'^N$, we obtain an equation of degree $N$ for $\lambda$. Moreover, 
since $\alpha^N \in \F_{p^l}$, the coefficients in front of $\lambda^N$ in both sides of the equation  cancel out.
Observe that the resulting equation of degree $N-1$ for $\lambda$ is non-trivial. Indeed, if the independent term and the coefficient for the term of degree~$1$ both vanished, we would deduce that $\beta^{Nq} = \beta^N$ and that $(\alpha^k\beta^{N-1})^q =\alpha^k\beta^{N-1}$, so that $\beta^N$ and $\alpha^k\beta^{N-1}$ would be both contained in  $\F_p(\alpha^N)$. This would contradict the hypothesis that $\F_p(\alpha^N)$  is strictly contained in  $\F_p(\alpha^N,\beta^N,\alpha^{k}\beta^{N-1})$.

Overall, it follows that  the number of $\lambda$'s such that $\F_p(\beta'^N)$ has size at most $q$ is bounded above by
$$
N-1  + \sum_{t=1}^{l-1} N p^t < N + N\left(\frac{p^l - p}{p-1}\right) \leq  p-1 +  p^l-p = p^l - 1 = q-1
$$
since $N \leq p-1$ by hypothesis. 
Therefore, there exists $\lambda \in \F_p(\alpha^N)$ such that $\F_p(\beta'^N)$ has more than $q$ elements, which proves (ii).
\end{proof}

\subsection{Polynomial functions with prescribed values}

The following elementary result may be viewed as a consequence of the Chinese remainder theorem. 

Since it plays a key role in the sequel, we include a (short) direct proof for the reader's convenience. 

\begin{lem}
\label{lem:polynomial-map-k-tuple}
Let $K$ be an arbitrary field and $\overline{K}$ denote an algebraic closure. 
Let $\mu_1, \dots, \mu_k \in \overline{K}$ be elements whose respective minimal polynomials over $K$ are pairwise distinct. 
Then for all $\nu_1, \dots, \nu_k \in \overline{K}$ such that $\nu_i \in K(\mu_i)$, there exists a polynomial $f \in K[x]$ satisfying 
$f(\mu_i) = \nu_i$ for all $i \in \{1, \dots, k\}$. 
\end{lem}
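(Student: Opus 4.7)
The plan is to prove the lemma by a direct Chinese Remainder Theorem argument in the polynomial ring $K[x]$.

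First, for each $i \in \{1, \dots, k\}$, let $m_i(x) \in K[x]$ denote the minimal polynomial of $\mu_i$ over $K$. Since $\mu_i$ is algebraic over $K$, $m_i$ is irreducible; by hypothesis the $m_i$ are pairwise distinct, hence pairwise coprime (two distinct monic irreducible polynomials in $K[x]$ share no common factor). Next, I would observe that since $\nu_i \in K(\mu_i) = K[\mu_i]$, there exists a polynomial $g_i \in K[x]$ with $g_i(\mu_i) = \nu_i$; concretely, $g_i$ is a preimage of $\nu_i$ under the surjection $K[x] \twoheadrightarrow K[x]/(m_i) \xrightarrow{\sim} K(\mu_i)$ sending $x \mapsto \mu_i$.

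The remaining step is to produce a single $f$ that simultaneously satisfies $f(\mu_i) = \nu_i$ for all $i$. For this, apply the Chinese Remainder Theorem to the pairwise coprime ideals $(m_1), \dots, (m_k)$ in $K[x]$: the natural map
\[
K[x] \big/ (m_1 \cdots m_k) \;\longrightarrow\; \prod_{i=1}^{k} K[x] / (m_i)
\]
is an isomorphism of $K$-algebras. In particular, there exists $f \in K[x]$ such that $f \equiv g_i \pmod{m_i}$ for every $i$. Evaluating at $\mu_i$ (which kills $m_i$) yields $f(\mu_i) = g_i(\mu_i) = \nu_i$, as desired.

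There is no real obstacle here; the only thing one has to be careful about is confirming that the pairwise distinctness of the \emph{minimal} polynomials (rather than just the $\mu_i$ themselves being distinct) is precisely what guarantees pairwise coprimeness of the $m_i$ and hence the applicability of CRT. If one prefers a more hands-on derivation avoiding CRT, one can instead set $M_i(x) = \prod_{j \neq i} m_j(x)$, note that $\gcd(m_i, M_i) = 1$ so $M_i(\mu_i) \in K(\mu_i)^\times$, and take $f(x) = \sum_{i=1}^{k} g_i(x) \cdot M_i(x) \cdot h_i(x)$ where $h_i \in K[x]$ satisfies $h_i(\mu_i) = M_i(\mu_i)^{-1}$ (which exists by the same $K(\mu_i) = K[\mu_i]$ argument applied to the inverse). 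Either formulation gives a proof of a few lines.
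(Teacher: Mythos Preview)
Your proof is correct and takes essentially the same approach as the paper: the paper itself remarks that the lemma ``may be viewed as a consequence of the Chinese remainder theorem'' and then gives a short hands-on argument by induction on $k$, constructing $f(x) = f_k(x)\varphi(x) + f_1(x)\cdots f_{k-1}(x)\psi(x)$ from the minimal polynomials $f_i$, which is exactly the spirit of your alternative Lagrange-style formula. Your direct invocation of CRT is a slightly cleaner packaging of the same idea.
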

\begin{proof}
We work by induction on $k$. 
In the base case where $k=1$, we observe that the elements $1, \mu_1, \mu_1^2, \dots, \mu_1^{d-1}$ form a basis of $K(\mu_1)$ viewed as a vector space over $K$, where $d = [K(\mu_1):K]$. The required assertion follows. 

Let now $k \geq 2$ and assume that the result is true for $k-1$. For each $i \in \{1, \dots, k\}$, let $f_i \in K[x]$ be the minimal polynomial of $\mu_i$. The hypothesis of the lemma implies that $f_i(\mu_j) \neq 0$ for all $i \neq j$. By the induction hypothesis, there exists a polynomial $\varphi \in K[x]$ such that 
$$\varphi(\mu_i) = \frac{\nu_i}{f_k(\mu_i)} \hspace{1cm} \text{for all } i = 1, \dots, k-1.$$
The induction hypothesis also ensures the existence of a polynomial $\psi \in K[x]$ such that 
$$\psi(\mu_k) = \frac{\nu_k}{f_1(\mu_k)f_2(\mu_k)\dots f_{k-1}(\mu_{k})}.$$
The polynomial $f(x) = f_k(x)\varphi(x) + f_1(x)f_2(x)\dots f_{k-1}(x)\psi(x)$ belongs to $K[x]$ and satisfies the required property.
\end{proof}

\section{Bounding  representation angles for  some nilpotent groups}

Various results providing bounds on the representation angle  for nilpotent groups have been established, starting with \cite[\S 4]{EJ} which deals with groups of nilpotency class~$2$. Further results may be found in \cite[\S 10]{EJK}. In general,  the bounds on the angle depend on the nilpotency class of the group and on the size of the smallest finite quotient. By way of illustration, we record the following proposition that can be deduced from~\cite[Theorem~4.1]{ER18}.   

\begin{prop}\label{prop:rep-angle}
Let $c \geq 1$ be an integer. There is a constant $C$ (depending on $c$) such that for any prime $p$ and any nilpotent group $G$  of class $c$ generated by a pair $X, Y$ of subgroups of order equal to a power of $p$, 
we have  $$\varepsilon(G; X, Y) \leq C/\sqrt[2^{c-1}]{p}.$$ 
\end{prop}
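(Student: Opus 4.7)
The plan is to deduce the bound from \cite[Theorem~4.1]{ER18}. First I would verify that the setting of loc.\ cit.\ applies to our hypotheses. Since $G = \langle X \cup Y \rangle$ is nilpotent and generated by elements of $p$-power order, the standard structure theory of nilpotent groups (the torsion elements form a subgroup, combined with the bound on exponent coming from the Hall--Petrescu formula) forces $G$ to be a finite $p$-group of nilpotency class at most $c$, which matches the framework of \cite[Theorem~4.1]{ER18}. Second, I would specialize the conclusion of that theorem to the $p$-group situation, checking that the general parameter entering the bound reduces, in our case, to a quantity at least linear in $p$, so that the inequality takes the stated form $\varepsilon(G; X, Y) \leq C / \sqrt[2^{c-1}]{p}$, with $C$ depending only on $c$.

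To orient the reader, the underlying strategy is induction on the nilpotency class $c$. The base case $c = 1$ is elementary: $G$ is abelian, so one decomposes $V$ into $G$-characters. The subspace $V^X$ is supported on characters trivial on $X$ and $V^Y$ on those trivial on $Y$; since $V^G = 0$, no character is trivial on the whole of $G = XY$, so the two supports are disjoint. Hence $V^X \perp V^Y$, giving $\varepsilon = 0$. For the inductive step, let $Z = G_c$ be the last non-trivial term of the lower central series, which is central. The orthogonal decomposition $V = V^Z \oplus (V^Z)^{\perp}$ is $G$-invariant, and on $V^Z$ the quotient $G/Z$, of class $c-1$, acts with no invariant vectors. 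The induction hypothesis therefore controls the angle on $V^Z$ by $C_{c-1} \, p^{-1/2^{c-2}}$.

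The technical core is the bound on the complement $(V^Z)^{\perp}$, where $Z$ acts without invariant vectors. Here one picks an element $z \in Z$ of order $p$ and uses that, since $Z \subseteq G_c$, the element $z$ may be written as an iterated commutator of length $c$ in elements of $X \cup Y$; the spectral gap of $\pi(z)$ on $(V^Z)^{\perp}$ is then at least $2\sin(\pi/p)$, which is of order $1/p$. Applying the Cauchy--Schwarz squaring identity $c - 1$ times, peeling off one layer of commutator at each step, one reduces the bound on $|\langle v, w \rangle|$ (for unit vectors $v \in V^X \cap (V^Z)^{\perp}$ and $w \in V^Y \cap (V^Z)^{\perp}$) to a quantity controlled by the spectral gap of $\pi(z)$; the $c-1$ successive square roots convert the factor $1/p$ into $p^{-1/2^{c-1}}$. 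Summing the two contributions and absorbing the weaker one into a single constant yields the proposition. The hard part in writing this out rigorously is the bookkeeping in the Cauchy--Schwarz induction --- one must verify that the constants at each level depend only on $c$ and not on the orders $|X|$, $|Y|$, and that the iterated commutator expression of $z$ does not re-introduce the orders of $X$ and $Y$ into the final estimate.
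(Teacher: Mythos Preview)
Your approach is the same as the paper's: the paper does not give a proof but merely records that the proposition ``can be deduced from~\cite[Theorem~4.1]{ER18}'', and your first paragraph does exactly this, correctly noting that $G$ is forced to be a finite $p$-group. Your subsequent sketch of the inductive mechanism behind the Ershov--Rall result is extra expository material not present in the paper; it is broadly in the right spirit, though the ``Cauchy--Schwarz peeling'' step is vague as written and you yourself flag the bookkeeping as the hard part --- but since the paper itself defers entirely to \cite{ER18}, nothing more is required here.
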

\begin{proof}
We work by induction on $c$. For $c=1$ the group is abelian and we have $\varepsilon(G; X, Y) = 0$ since all irreducible unitary representations of $G$ are $1$-dimensional. 

Let now $G$ have nilpotency class $c+1$ and let $H$ be the center of $G$. The hypotheses imply that $G$ is a finite $p$-group. Therefore, the minimal degree of  an irreducible representation of $G$  whose restriction to $H$ is non-trivial is at least $p$. By \cite[Theorem~4.1]{ER18}, we have $\varepsilon(G; X, Y) \leq \sqrt{\varepsilon(G/H; X', Y')+\frac 1 p}$, where $X' = XH/H$ and $Y' = YH/H$. By induction, there is a constant $C$ such that $\varepsilon(G/H; X', Y') \leq C/\sqrt[2^{c-1}]{p}$. Using that $ p \geq  \sqrt[2^{c-1}]{p}$, we infer that 
$\varepsilon(G; X, Y) \leq \sqrt{C+1}/\sqrt[2^c]{p}$, as required.
\end{proof}


This can be applied to the group $G = \Gamma_{c,\F_q}$, with $X = X_0(\F_q)$ and $Y = Y(\F_q)$ for any $q= p^e$. 
In case of the group $\Gamma_{c,R}$, this bound can be improved using relatively elementary argument provided the ring 
$R$ is countable. 
Notice that the bound below is stronger than the one in Proposition~\ref{prop:rep-angle} since it decays as $p^{-1/2}$ which is much smaller than than $p^{-1/2^{c-1}}$ as $p$ tends to $\infty$.

\begin{thm}
\label{thm:rep-angle-improved}
Let $R$ be a finite or countable commutative ring with a unit. Denote by $p$ be the smallest prime $p$ such that $p$ is not invertible in $R$ (we set $p = \infty$ if $R$ contains the rational numbers $\Q$). For any integer $c$ with $1 \leq c \leq p-1$,  we have 
$$
\varepsilon( \Gamma_{c,R}; X_0(R) , Y(R) ) \leq  \sqrt{c/p}.
$$
\end{thm}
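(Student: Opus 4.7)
The plan is to exploit the abelian normal subgroup $A = R[x]_{\leq c}$ of $\Gamma_{c, R}$. I will fix a unitary representation $(\pi, V)$ of $\Gamma_{c,R}$ without nonzero invariant vector, take unit vectors $u \in V^{X_0(R)}$ and $v \in V^{Y(R)}$, and spectrally decompose $V$ via the Pontryagin dual $\hat A$, letting $E$ denote the associated projection-valued measure. Since $X_0(R) \subset A$, the vector $u$ lies in the range of $E(S)$ where $S = \{\chi \in \hat A : \chi|_{X_0(R)} = 0\}$, so
$$|\langle u, v\rangle|^2 \leq \|E(S) v\|^2 = \mu_v(S),$$
where $\mu_v(B) = \|E(B) v\|^2$ is the spectral measure of $v$. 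The theorem then reduces to showing $\mu_v(S) \leq c/p$.

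I will use two easily established facts about $\mu_v$: it is $Y(R)$-invariant (because $v \in V^{Y(R)}$ and $Y(R)$ acts on $\hat A$ through its action on $A$), and $\mu_v(\{0\}) = 0$ (because $E(\{0\}) v$ would be both $A$-fixed and $Y(R)$-fixed, hence $\Gamma$-fixed, hence zero). Next I will average the identity $\mu_v(S) = \mu_v(y(s) S)$ over the finite subset $T = \{i \cdot 1_R : i = 0, 1, \dots, p-1\} \subset Y(R)$. Every positive integer at most $p-1$ is a product of primes $< p$, each invertible in $R$ by hypothesis, so the $p$ elements of $T$ are distinct in $R$. Rewriting the average as an integral yields
$$\mu_v(S) = \frac{1}{p} \int_{\hat A \setminus \{0\}} \bigl|\{s \in T : (y(-s) \chi)_c = 0\}\bigr|\, d\mu_v(\chi).$$

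The crux is the pointwise bound: for every $\chi \in \hat A \setminus \{0\}$, at most $c$ elements $s \in T$ satisfy $(y(-s)\chi)_c = 0$. Under the identification $\hat A \cong \hat R^{c+1}$, a direct computation (dual to Remark~\ref{rem:Comm_Rel_Gamma}) gives $(y(-s)\chi)_c = \sum_{j=0}^c \binom{c}{j} s^{c-j} \chi_j$, an $\hat R$-valued polynomial in $s$ of degree $\leq c$. If this vanished at $c+1$ distinct points $s_0, \dots, s_c$ of $T$, the resulting linear system in $\chi_0, \dots, \chi_c$ would have coefficient matrix factoring as a Vandermonde matrix in the $s_k$ times the diagonal matrix $\mathrm{diag}\bigl(\binom{c}{0}, \dots, \binom{c}{c}\bigr)$. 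All pairwise differences $s_k - s_{k'}$ have integer absolute value at most $p-1$, and each $\binom{c}{j}$ divides $c!$ with $c \leq p-1$, so both determinants are products of primes less than $p$, hence units of $R$. The matrix is therefore invertible over $R$ and acts invertibly on $\hat R^{c+1}$, forcing $\chi = 0$, a contradiction.

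The main subtlety to handle is that $\hat R$ is generally not an integral domain, so the naive bound ``a degree-$c$ polynomial has at most $c$ roots'' is not directly available; the Vandermonde/binomial determinant calculation bypasses this by exhibiting invertibility of the whole evaluation map as an $R$-module endomorphism of $\hat R^{c+1}$, and this is precisely where the hypothesis $c \leq p-1$ is used. The case $p = \infty$ follows by the same argument, taking $T$ of arbitrarily large size to conclude $\mu_v(S) = 0$.
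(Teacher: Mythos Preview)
Your proof is correct and follows essentially the same approach as the paper's: spectral decomposition over the dual of the abelian normal subgroup $A = R[x]_{\leq c}$, reduction to bounding $\mu_v(X_0^\perp)$, averaging over the $p$ integer translates $y(0), y(1), \dots, y(p-1)$, and the Vandermonde-plus-binomial invertibility argument to show that each nontrivial character lands in $X_0^\perp$ under at most $c$ of those translates. The only differences are cosmetic: the paper first splits off $V^A$ as a $\Gamma$-summand rather than invoking $\mu_v(\{0\})=0$, bounds the angle against the larger subspace $V^{Y(1)} \supseteq V^{Y(R)}$, and packages the averaging step as a separate pigeonhole lemma (Lemma~\ref{lem:aux2}).
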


For the proof, we need the following elementary fact.

\begin{lem}\label{lem:aux2}
Let $(Z, \nu)$ be a probability space and $c, p$ be integers with $1\leq c < p$. Let $A_1, \dots, A_p \subseteq Z$ be subsets of equal measure and assume that each $z \in Z$ belongs to at most $c$ of the sets $A_1, \dots, A_p$. Then $\nu(A_i) \leq c/p$. 
\end{lem}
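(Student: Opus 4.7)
The plan is to use a standard double-counting argument, recast as an integration of the sum of indicator functions. First I would integrate the function $f(z) = \sum_{i=1}^p \mathbf{1}_{A_i}(z)$ against $\nu$. By the hypothesis that every $z \in Z$ lies in at most $c$ of the sets $A_i$, we have the pointwise bound $f(z) \leq c$ on $Z$, and since $\nu$ is a probability measure this gives
$$\int_Z f(z)\, d\nu(z) \leq c \cdot \nu(Z) = c.$$

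Next I would evaluate the same integral term by term: by linearity and the definition of $\nu(A_i)$,
$$\int_Z f(z)\, d\nu(z) = \sum_{i=1}^p \nu(A_i).$$
Since all the $\nu(A_i)$ are equal by hypothesis, this sum equals $p \cdot \nu(A_1)$. Combining the two equalities/inequalities yields $p \cdot \nu(A_1) \leq c$, i.e.\ $\nu(A_i) \leq c/p$ for each $i$.

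There is no real obstacle here; the only mild point worth noting is that the hypothesis $c < p$ is not used in the argument itself (the conclusion $\nu(A_i) \leq c/p$ holds whenever $c \leq p$, and is only informative when $c < p$), so I would not need to invoke it.
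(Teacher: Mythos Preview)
Your proof is correct and is essentially the same double-counting argument as the paper's: the paper phrases it by partitioning $Z$ into the level sets $Z(n)=\{z : z \text{ lies in exactly } n \text{ of the } A_i\}$ and computing $\sum_k \nu(A_k)=\sum_n n\,\nu(Z(n))\leq c$, which is just your integral $\int f\,d\nu$ written out via the values of $f$. Your observation that the hypothesis $c<p$ is not actually needed is also correct.
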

\begin{proof}
For each integer $n \geq 0$, let $Z(n) \subseteq Z$ be the subset consisting of those $z \in Z$ which belong to exactly $n$ of the sets $A_1, \dots, A_p$. By hypothesis, we have $Z = Z(c) \cup Z(c-1) \cup \dots \cup Z(0)$. Moreover, the sets $A_i$ have equal measure, so that
\begin{align*}
p \nu(A_i) & = \sum_{k=1}^p \nu(A_k)\\
& = c \nu(Z(c)) + (c-1) \nu(Z(c-1)) + \dots + \nu(Z(1))\\
& \leq c\nu\big(Z(c) \cup \dots \cup Z(1)\big)\\
& \leq c \nu(Z)\\
&= c.
\end{align*}
The desired assertion follows. 
\end{proof}

\begin{proof}[Proof of Theorem~\ref{thm:rep-angle-improved}]
Let $(\pi, V)$ be a unitary representation of $\Gamma_{c,R}$ without any non-zero invariant vectors.

Since $X$ is normal in $\Gamma = \Gamma_{c, R}$, we have $V = V^X \oplus (V^X)^\perp$ as $\Gamma$-representations. Moreover,  as mentioned above,   the representation angle behaves well with respect to direct sums, i.e. we have $\sphericalangle_{\oplus \pi_i} (H; X, Y) = \min_i  \sphericalangle_{\pi_i} (H; X, Y)$.  
Therefore, we only need to consider separately the  cases where $V$ has no non-zero $X$-invariant vectors, and where the $X$-action on $V$ is trivial.
In the second case $V$ does not have any non-zero $Y(R)$-invariant vectors, thus the representation angle is $\pi/2$ by definition. We assume henceforth that $V$ does not contain any non-zero $X$-invariant vectors.

We invoke the SNAG Theorem for the restriction of the representation to the abelian group  $X$, see \cite[Theorem~2.C.3]{BH_duals}, as well as \cite[Appendix~A.9]{BH_duals}.  
This yields a projection-valued measure 
$\mu$ defined on the dual $\widehat X = \Hom(X, \mathbf S^1)$, with values in the set of orthogonal projections on $V$.
For any non-zero vector $v\in V$, the assignment  
$$\mu_v(E) = \langle v, \mu(E) v \rangle$$
defines a 
measure $\mu_v$ on $\widehat X$. Moreover, if $v$ is a unit vector, then  $\mu_v$ is a probability measure, see  \cite[Appendix~A.9]{BH_duals}.

Since $V$ has  no non-zero $X$-invariant vectors, it follows that the trivial character $\mathbf 1_X \in \widehat X$ is not an atom of the measure $\mu$, see \cite[Prop.~2.D.1(a)]{BH_duals}.
Let 
$$
X_0^\perp  = \{\chi \in \widehat X \mid \chi(g) = 1 \ \forall g \in X_0\}\leq \widehat X
$$ 
be the annihilator of $X_0$. By applying \cite[Prop.~2.D.1(a)]{BH_duals} to the restriction of the representation to $X_0$, we see that the operator $\mu(X_0^\perp)$ coincides with the orthogonal projection on the subspace $V^{X_0}$ consisting of the $X_0$-invariant vectors. In particular, it follows that for each unit vector $v \in V$, the cosine of the angle between $v$ and $V^{X_0}$ is equal to $\sqrt{\mu_v(X_0^\perp)}$.

The group $X$ is normal in $G$ therefore $G$ acts on the dual $\widehat X$. 
In particular we have an action of the integers $\Z$ on $\widehat X$ via the natural homomorphism $\Z \to Y(R): t \mapsto Y(t.1)$ and the action of $
Y(R)$ on $X$
given by conjugation.

\begin{lem}\label{lem:aux1}
Let $\psi \in \widehat X$ be a nontrivial character. Then the number of integers $t \in \{0,1,\dots,  p-1\}$  such that $Y(t.1) \psi \in X_0^\perp$ is at most $c$.
\end{lem}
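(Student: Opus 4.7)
The plan is to translate the condition $Y(t\cdot 1)\psi \in X_0^\perp$ into a concrete vanishing statement about $\psi$, and then to reduce to a linear-algebra question resolved by a Vandermonde-type computation.

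First I would unpack the condition. Since $X = R[x]_{\leq c}$ is normal in $\Gamma_{c,R}$ and the conjugation action of $Y(r)=y(r)$ on $X$ is the change of variables $P(x)\mapsto P(x+r)$, the induced left action on the dual satisfies $(y(r)\cdot \psi)(P)=\psi(P(x-r))$. Evaluating on $X_0 = R\cdot x^c$, the condition $Y(t\cdot 1)\psi \in X_0^\perp$ is equivalent to $\psi(r(x-t)^c)=1$ for every $r\in R$. In other words, $\psi$ is trivial on the cyclic $R$-submodule $R\cdot(x-t)^c \subseteq X$.

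Suppose for contradiction that there exist $c+1$ distinct integers $t_0,\dots,t_c \in \{0,\dots,p-1\}$ all satisfying the condition. Then $\psi$ vanishes on the $R$-submodule $\sum_{i=0}^c R\cdot(x-t_i)^c$, and I aim to show this submodule is all of $X$, which contradicts the non-triviality of $\psi$ and yields the desired bound $\leq c$. Using the binomial expansion $(x-t_i)^c = \sum_{j=0}^c \binom{c}{j}(-t_i)^{c-j}x^j$, the change-of-basis matrix from $\{(x-t_i)^c\}_{i=0}^c$ to $\{x^j\}_{j=0}^c$ has entries $M_{ij}=\binom{c}{j}(-t_i)^{c-j}$, and the standard Vandermonde evaluation gives, up to sign,
$$\det M = \left(\prod_{j=0}^c \binom{c}{j}\right)\cdot \prod_{0\leq i<k\leq c}(t_i-t_k).$$
The core claim is that this is a unit in $R$: every prime divisor of each $\binom{c}{j}$ is at most $c\leq p-1$, hence invertible in $R$ by the minimality of $p$; likewise every nonzero difference $t_i-t_k$ is an integer of absolute value at most $p-1$, so its prime divisors are again units of $R$. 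Consequently $\{(x-t_i)^c\}_{i=0}^c$ is an $R$-basis of $X=R[x]_{\leq c}$, so $\psi$ must be trivial on $X$, the desired contradiction.

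I expect the only technical point to be the Vandermonde-style evaluation of $\det M$ and the simultaneous verification that both its binomial-coefficient factors and its pairwise-difference factors are units of $R$; this is exactly where the double hypothesis $c \leq p-1$ and the minimality definition of $p$ play their roles. Everything else is essentially bookkeeping around the action of $Y(R)$ on $\widehat X$.
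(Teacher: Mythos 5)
Your proof is correct and uses essentially the same approach as the paper: both arguments reduce to the invertibility of a Vandermonde-type matrix whose determinant factors into binomial coefficients $\binom{c}{j}$ and pairwise differences $t_i - t_k$, all of which are units in $R$ because $c \leq p-1$. The paper works dually in $\widehat X$ (solving the linear system to recover the components $\psi_i$), while you work directly in $X$ (showing $\{(x-t_i)^c\}$ spans $R[x]_{\leq c}$), but these are two sides of the same computation.
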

\begin{proof}
The abelian group $X$ decomposes as a direct sum  $X = \bigoplus_{i=0}^c X_i(R)$. Hence it  can be identified with the direct sum of $c+1$ copies of the additive group $R$, and the dual $\widehat X$ can be identified with the direct sum of $c+1$ copies of $\widehat R$.  Under this identification, we write $\psi = (\psi_0,\psi_1, \dots, \psi_c) \in \bigoplus_{i=0}^c \widehat R$, and the action of $\Z$ is given by
$$
Y(t.1) ( \psi_0,\psi_1, \dots, \psi_c) =  ( \psi'_0,\psi'_1, \dots, \psi'_c),
$$
where 
$\psi'_n = \sum_{i=0}^{c-n} \binom{c-n} i t^i \psi_{n+i}$.

Assume that there exist pairwise distinct integers
 $t_0, t_1, \dots, t_c \in  \{0,1,\dots,  p-1\}$ such that $Y(t_s.1) \psi  \in X_0^\perp$ for all $s$. We must deduce that $\psi$ is the trivial character. 
 
For each $s \in \{0, 1, \dots, c\}$, let $\phi_s$ denote the first coordinate of
$Y(t_s.1) \psi = Y(t_s.1)( \psi_0,\psi_1, \dots, \psi_c)$, so that $\phi_s = \sum_{i=0}^{c} \binom{c} i (t_s)^i \psi_{i}$. 
Considering those  equations for $s=0, 1, \dots, t$, we obtain the equation 
$(\phi_0, \dots, \phi_t)^\top = 
V \big(\dots, \binom c i \psi_i, \dots \big)^\top$, 
where $V$ is a Vandermonde matrix. By hypothesis, we have $p>c$ and the number $c!$ is invertible in $R$, so that   the binomial coefficients $\binom c i$ are invertible in $R$ for all $i$. Moreover 
$t_s - t_{s'}$  is also invertible for all $s\not =s'$. Therefore, we may express all the $\psi_i$ as linear combinations of the $\phi_s$.

By the definition of $\phi_s$, the condition that $Y(t_s.1) \psi$ belongs to the annihilator $X_0^\perp$ is equivalent to saying that $\phi_s =0$ in $\widehat R$. 
The preceding discussion then implies that $\psi_i=0$ for all $i$. In other words $\psi$ is the trivial character.
\end{proof}

The above lemma allows us to complete the proof of Theorem~\ref{thm:rep-angle-improved} as follows. First observe that 
$$\varepsilon(\Gamma_{c, R};X_0, Y(R)) \leq \cos\big(\sphericalangle(V^{X_0}, V^{Y(1)})\big)$$
since $V^{Y(R)} \subseteq V^{Y(1)}$. 
Therefore, it suffices to provide an upper bound on the cosine of  the angle formed by  $V^{X_0}$ and $V^{Y(1)}$. 
Consider an arbitrary unit vector $v \in V^{Y(1)}$. Since $v$ is fixed by $Y(1)$, the measure $\mu_v$ is  invariant under  $Y(t.1) $ for all $t \in \mathbf Z$.
Therefore $\mu_v(Y(t.1) X_0^\perp) = \mu_v(X_0^\perp)$ for any $t\in \mathbf Z$. The previous lemma gives that any non-trivial character in $\widehat X$ belongs to at most $c$  of the sets $\mu_v(Y(t.1) X_0^\perp)$, where $t=0,\dots, p-1$. Since $\mathbf 1_X \in \widehat X$ is not an atom of the measure $\mu_v$, we may invoke Lemma~\ref{lem:aux2} for the probability space $(\widehat X \setminus \{\mathbf 1_X\}, \mu_v)$. We deduce   
that  $\mu_v(X_0^\perp)\leq c/p$. This finishes the proof since we have seen above that 
$$\cos\big(\sphericalangle(V^{X_0}, v) \big)= \sqrt{\mu_v(X_0^\perp)},$$ so that $\sqrt{c/p}$ is indeed an upper bound for the cosine of the   angle $\sphericalangle(V^{X_0}, V^{Y(1)})$.
\end{proof}

\section{On tame automorphisms of the affine space}

Let $R$ be a commutative 
ring with a unit and consider the polynomial ring 
$$R_n=R[x_1,\dots, x_n].$$
For any $1 \leq i \not = j \leq n$ and any integer $e \geq 0$, we define  $\trans ijer \colon  R_n\to R_n$
as  the automorphism of
$R_n$ such that
\begin{align*}
 \trans ijer: \left\{
 \begin{array}{ll}
    x_i \mapsto x_i + r x_j^e   &  \\
    x_\ell \mapsto x_\ell & \mbox{for } \ell \not =i.  
 \end{array}\right.
\end{align*}
For a fixed triple $i,j,e$, the subgroup 
$$
\Trans ije = \left\langle \trans ijer \mid r \in R\right\rangle
$$ 
is isomorphic to the additive group of the ring $R$. 

We recall that, throughout this paper, we use the notation $[g, h] = g^{-1} h^{-1} gh$ for the commutator of $g$ and $h$. 
The automorphisms $\trans ijer$ satisfy the following commutation relations that are easy to verify, where different symbols are assumed to represent  different indices:
\begin{itemize}
    \item $[\trans ijer, \trans {i'}{j'}{e'}{r'}] =\id$,
    \item $[\trans ijer, \trans {i}{j'}{e'}{r'}] =\id$,
    \item $[\trans ijer, \trans {i'}{j}{e'}{r'} ] =\id$.
\end{itemize}

We shall next collect important information on the subgroup of $\Aut(R_n)$ generated by $\Trans ije$ and $\Trans jk{e'}$. To that end, 
 it is useful to define a few more elements in $\Aut(R_n)$. Let 
$\dtrans ijkcdr \colon  R_n\to R_n$ be  the automorphism of
$R_n$ such that
\begin{align*}
 \dtrans ijkcdr: \left\{
 \begin{array}{ll}
    x_i \mapsto x_i + r x_j^cx_k^d   &  \\
    x_\ell \mapsto x_\ell & \mbox{for } \ell \not =i.  
 \end{array}\right.
\end{align*}
For a fixed tuple $i,j,k,c,d$, the subgroup $\dTrans ijkcd$ generated by $\dtrans ijkcdr$ is isomorphic to the additive subgroup of the ring $R$. 
It is easy to see that these elements satisfy commutation relations similar to the ones satisfied by $\trans ijer$.

In the following result, we retain the notation of Proposition~\ref{prop:generationGamma} concerning the group $\Gamma_{c, R}$. In particular $P_\ell(r)$ denotes the monomial $rx^{c-\ell}$, viewed as an element of $\Gamma_{c, R}$. 

\begin{prop}
\label{prop:nilpotent}
Let  $c \geq 1$ and $d \geq 0$ be integers. Let $i, j, k \in \{1, \dots, n\}$ be pairwise distinct.  Then the assignments 
$$P_\ell(r) \mapsto \dtrans ijk{c-\ell}{d\ell}r 
\qquad \text{and} \qquad 
y(s) \mapsto \trans jkds,$$
where $r,s \in R$ and $\ell \in \{0, 1, \dots, c\}$, 
extend to an isomorphism
$$\Gamma_{c, R} \to \left\langle (\bigcup_{\ell=0}^c \dTrans ijk{c-\ell}{d\ell}) \cup \Trans jkd\right\rangle \leq \Aut(R_n).$$
In particular, if $c!$ is an invertible element in $R$, then $\left\langle \Trans ijc  \cup \Trans jkd \right\rangle$ is isomorphic to $\Gamma_{c, R}$ and contains $\dtrans ijk{c-\ell}{d\ell}r$ for all $r \in R$ and $\ell \in \{0, 1, \dots, c\}$.
\end{prop}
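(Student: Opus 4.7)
The plan is to realize $\Gamma_{c,R}$ as the semidirect product $R[x]_{\leq c} \rtimes R$ and invoke its universal property: specify the homomorphism on each factor and verify a single compatibility relation between them. On the normal factor $R[x]_{\leq c}$, I send each $P_\ell(r)$ to $\dtrans{i}{j}{k}{c-\ell}{d\ell}{r}$; on $Y(R) \cong R$, I send each $y(s)$ to $\trans{j}{k}{d}{s}$. The first assignment is well defined because each $\dtrans{i}{j}{k}{c-\ell}{d\ell}{r}$ modifies only $x_i$, and only by a polynomial in the fixed variables $x_j, x_k$; any two such automorphisms therefore commute, and the assignment is additive in $r$ within each summand $X_\ell(R)$. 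These facts assemble into a group homomorphism on the abelian group $R[x]_{\leq c} = \bigoplus_\ell X_\ell(R)$. The second assignment is manifestly a homomorphism.

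The central verification is the semidirect-product compatibility, which by additivity reduces to the identity
$$\trans{j}{k}{d}{s}\cdot \dtrans{i}{j}{k}{n}{d(c-n)}{r} \cdot \trans{j}{k}{d}{s}^{-1} = \prod_{m=0}^n \dtrans{i}{j}{k}{n-m}{d(c-n+m)}{\binom{n}{m} s^m r}$$
for $0 \leq n \leq c$, with the right-hand side being an unordered product of pairwise commuting automorphisms. A direct calculation, using that $\trans{j}{k}{d}{s}$ substitutes $x_j \mapsto x_j + sx_k^d$, shows that the left-hand side sends $x_i$ to $x_i + r(x_j+sx_k^d)^n x_k^{d(c-n)}$ and fixes $x_j, x_k$. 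Expanding the binomial matches this with the right-hand side, which is exactly the image of $r(x+s)^n = \sum_m \binom{n}{m} s^m P_{c-n+m}(r) \in R[x]_{\leq c}$ under the first assignment. By the universal property of the semidirect product, the two assignments combine into a homomorphism $\phi \colon \Gamma_{c,R} \to \Aut(R_n)$ implementing the prescribed formulas.

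For injectivity, write an arbitrary element of $\Gamma_{c,R}$ in its unique normal form $\big(\prod_{\ell=0}^c P_\ell(r_\ell)\big) \cdot y(s)$. Its image under $\phi$ sends $x_j$ to $x_j + sx_k^d$, which forces $s = 0$ whenever $\phi$ of the element is trivial; the resulting action on $x_i$ is then $x_i \mapsto x_i + \sum_\ell r_\ell x_j^{c-\ell}x_k^{d\ell}$, and the monomials $x_j^{c-\ell}x_k^{d\ell}$ for $\ell = 0, \dots, c$ have pairwise distinct first exponents, hence are $R$-linearly independent in $R[x_j, x_k]$, so every $r_\ell$ vanishes. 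Thus $\phi$ is an isomorphism onto the displayed subgroup of $\Aut(R_n)$.

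For the final clause, observe that $\phi$ sends $X_0(R)$ to $\Trans{i}{j}{c}$ (since $P_0(r) = rx^c$ maps to $\dtrans{i}{j}{k}{c}{0}{r} = \trans{i}{j}{c}{r}$) and $Y(R)$ to $\Trans{j}{k}{d}$, so when $c!$ is invertible in $R$, the last assertion of Proposition~\ref{prop:generationGamma} gives $\Gamma_{c,R} = \langle X_0(R) \cup Y(R)\rangle$, whence $\langle \Trans{i}{j}{c}\cup \Trans{j}{k}{d}\rangle = \phi(\Gamma_{c,R})$, and this group necessarily contains every $\dtrans{i}{j}{k}{c-\ell}{d\ell}{r}$. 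The only real obstacle is bookkeeping in the compatibility computation: matching the binomial expansion of $(x_j + sx_k^d)^n$ with the $\phi$-image of the translated polynomial $r(x+s)^n$, and being careful about the right-to-left composition convention on automorphisms.
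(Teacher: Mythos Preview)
Your proof is correct and follows essentially the same approach as the paper's: both verify that the assignments respect the conjugation/commutator relation between the abelian normal subgroup and $Y(R)$ (the paper computes $[\dtrans ijk{m}{n}r,\trans jkds]$ and matches it with Remark~\ref{rem:Comm_Rel_Gamma}, while you compute the equivalent conjugation identity), and both invoke Proposition~\ref{prop:generationGamma} for the final clause. Your explicit injectivity argument via the normal form and linear independence of the monomials $x_j^{c-\ell}x_k^{d\ell}$ makes precise a step the paper leaves implicit when it asserts that the abelian parts are isomorphic.
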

\begin{proof}
Let us first observe that the given assignments establish an isomorphism between the abelian subgroup $R[x]_{\leq c} \leq \Gamma_{c, R}$ and $\left\langle \bigcup_{\ell=0}^c \dTrans ijk{c-\ell}{d\ell}\right\rangle$. We must now compare the respective conjugation actions of $y(s)$ and $\trans jkds$ on those abelian groups. To this end, we consider non-negative integers  $m, n, d$  and elements  $r, s \in R$. We compute that 
\begin{align*}
[\dtrans ijk{m}{n}r, \trans jkds] : \left\{
\begin{array}{ll}
  x_i \mapsto x_i + r x_k^n \left((x_j+sx_k^d)^m - x_j^m\right)   &  \\
    x_\ell \mapsto x_\ell & \mbox{for } \ell \not =i.  
\end{array}
\right.
\end{align*}
Using an additive notation for the subgroup of $\Aut(R_n)$ generated by $\dtrans ijk{m'}{n'}r$ with $i, j, k$ fixed, we infer that 
$$
[\dtrans ijk{m}{n}r, \trans jkds] = \sum_{t=1}^m \binom m t \dtrans ijk{m-t}{n+dt}{rs^t}.
$$
It follows from Remark~\ref{rem:Comm_Rel_Gamma} that the assignments 
$P_\ell(r) \mapsto \dtrans ijk{c-\ell}{d\ell}r$  and $y(s) \mapsto \trans jkds$ indeed extend to an isomorphism as required.  If in addition $c!$ is invertible in $R$, we deduce the extra conclusions follow from Proposition~\ref{prop:generationGamma}. 
\end{proof}

\begin{coro}
\label{cor:nilpotent}
Let $p$ be a prime such that $p>c$ and set $R= \F_p$. 
Let $i, j,  k \in \{1, \dots, n\}$  be pairwise distinct, and let $c \geq 1$ and $d \geq 1$ be integers. The assignments
$$x \mapsto \trans ijc1 \qquad \text{and} \qquad y \mapsto \trans jkd1$$
extend to an injective group homomorphism 
$$\Gamma_{c,\F_p} \to \Aut(\F_p[x_1, \dots, x_n]). $$
\end{coro}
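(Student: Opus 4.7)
The plan is to deduce this corollary as an immediate specialization of Proposition~\ref{prop:nilpotent} to $R = \F_p$. Since $p > c$, the element $c!$ is nonzero in $\F_p$ and hence invertible; this places us in the ``in particular'' clause of Proposition~\ref{prop:nilpotent}, so $\langle \Trans ijc \cup \Trans jkd\rangle \leq \Aut(\F_p[x_1, \dots, x_n])$ is isomorphic to $\Gamma_{c, \F_p}$ under the map $\varphi$ determined by $P_\ell(r) \mapsto \dtrans ijk{c-\ell}{d\ell}r$ and $y(s) \mapsto \trans jkds$.

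Next I would identify the elements ``$x$'' and ``$y$'' of $\Gamma_{c, \F_p}$ appearing in the statement of the corollary. By Proposition~\ref{prop:generationGamma}, still using invertibility of $c!$, we have $\Gamma_{c, \F_p} = \langle X_0(\F_p) \cup Y(\F_p)\rangle$. Both subgroups are isomorphic to the additive group of $\F_p$, hence cyclic of prime order; so I take $x = P_0(1)$ as a generator of $X_0(\F_p)$ and $y = y(1)$ as a generator of $Y(\F_p)$. Together, they generate $\Gamma_{c, \F_p}$, and it then suffices to show that $\varphi$ sends them to the prescribed targets and is injective.

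Finally, I would verify the two assignments by direct inspection. The rule $y(s) \mapsto \trans jkds$ at $s=1$ gives $\varphi(y) = \trans jkd1$; and the rule $P_\ell(r) \mapsto \dtrans ijk{c-\ell}{d\ell}r$ at $\ell = 0$, $r = 1$ gives $\varphi(x) = \dtrans ijk{c}{0}{1}$, the automorphism $x_i \mapsto x_i + x_j^c x_k^0 = x_i + x_j^c$, which is exactly $\trans ijc1$. Composing $\varphi$ with the inclusion into $\Aut(\F_p[x_1, \dots, x_n])$ produces the required injective homomorphism. Since the substantive work is entirely absorbed into Proposition~\ref{prop:nilpotent}, I do not expect any real obstacle; the only point worth noting is the degenerate identification $\dtrans ijk{c}{0}{1} = \trans ijc1$ arising from the vanishing $x_k$-exponent.
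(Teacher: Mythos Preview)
Your proposal is correct and follows exactly the intended route: the paper states this corollary without a separate proof, treating it as an immediate consequence of Proposition~\ref{prop:nilpotent}, and you have simply unpacked that deduction. The only points requiring care---the invertibility of $c!$ in $\F_p$ when $p>c$, the identification of the generators $x=P_0(1)$ and $y=y(1)$, and the degenerate identity $\dtrans ijk{c}{0}{1}=\trans ijc1$---are all handled correctly.
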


The computation in the proof of  Proposition~\ref{prop:nilpotent} actually establishes the following technical variant of that statement.

\begin{prop}
\label{prop:nilpotent:variant}
If $c!$ is an invertible element in $R$, then  
$\left\langle \dTrans ijkcd \cup \Trans jke\right\rangle$ is isomorphic to $\Gamma_{c,R}$.
That subgroup of $\Aut(R_n)$ contains the automorphisms $\dtrans ijk{c-\ell}{d+e\ell}r$ for all $r \in R$ and $\ell \in \{0, 1, \dots, c\}$.
\end{prop}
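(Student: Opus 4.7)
My plan is to observe that this variant essentially reuses the machinery of Proposition~\ref{prop:nilpotent}, with the only modification being a shift in the exponent of $x_k$ in the elementary ``monomial'' transvections. The idea is to define the assignment
$$P_\ell(r) \mapsto \dtrans{i}{j}{k}{c-\ell}{d+e\ell}{r} \qquad \text{and} \qquad y(s) \mapsto \trans{j}{k}{e}{s}$$
and show this extends to an isomorphism of $\Gamma_{c,R}$ onto $\langle \dTrans{i}{j}{k}{c}{d} \cup \Trans{j}{k}{e}\rangle$. Note that the target of $P_\ell(r)$ under the previous proposition was $\dtrans{i}{j}{k}{c-\ell}{d\ell}{r}$ using $d$ as the exponent; here we replace the pair $(0,d)$ of exponents (for the generator $\dTrans{i}{j}{c}{0}$ resp.\ $\Trans{j}{k}{d}$) by $(d,e)$, producing the shifted exponent $d+e\ell$.

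First I would verify, exactly as in the proof of Proposition~\ref{prop:nilpotent}, that the elements $\dtrans{i}{j}{k}{c-\ell}{d+e\ell}{r}$ pairwise commute (they all act trivially on every $x_m$ with $m\neq i$, and modify $x_i$ additively by a polynomial in $x_j,x_k$), so that the abelian group $R[x]_{\leq c}$ maps isomorphically onto their joint subgroup. Then I would redo the commutator computation
$$[\dtrans{i}{j}{k}{m}{n}{r}, \trans{j}{k}{e}{s}] = \sum_{t=1}^{m}\binom{m}{t}\dtrans{i}{j}{k}{m-t}{n+et}{r s^t},$$
which is the same formula as in Proposition~\ref{prop:nilpotent} with the role of $d$ played by $e$. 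Specializing to $m=c-\ell$ and $n=d+e\ell$ and re-indexing with $t' = \ell+t$, the right-hand side becomes $\sum_{t'=\ell+1}^{c}\binom{c-\ell}{t'-\ell}\dtrans{i}{j}{k}{c-t'}{d+et'}{rs^{t'-\ell}}$, which matches precisely the relation $[P_\ell(r),y(s)] = \sum_{t=1}^{c-\ell}\binom{c-\ell}{t}P_{\ell+t}(rs^t)$ of Remark~\ref{rem:Comm_Rel_Gamma}. So the assignment respects the defining commutation relations of $\Gamma_{c,R}$ and therefore extends to a homomorphism, which is injective because its restriction to the abelian part is.

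Finally, to conclude that the image is exactly $\langle \dTrans{i}{j}{k}{c}{d} \cup \Trans{j}{k}{e}\rangle$, and that the intermediate elements $\dtrans{i}{j}{k}{c-\ell}{d+e\ell}{r}$ lie in this subgroup, I would invoke the hypothesis that $c!$ is invertible in $R$: by Proposition~\ref{prop:generationGamma}, this forces $\Gamma_{c,R} = \langle X_0(R) \cup Y(R)\rangle$, and the images of $X_0(R)$ and $Y(R)$ under our assignment are precisely $\dTrans{i}{j}{k}{c}{d}$ and $\Trans{j}{k}{e}$. The claim about containing $\dtrans{i}{j}{k}{c-\ell}{d+e\ell}{r}$ for every $\ell$ then follows since these are the images of $P_\ell(r) \in \Gamma_{c,R}$.

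The only real obstacle I anticipate is purely bookkeeping: keeping track of the two ``degree'' parameters carefully enough that the combinatorial identity in Remark~\ref{rem:Comm_Rel_Gamma} matches the computed commutator after the substitution. No new conceptual input is needed beyond Proposition~\ref{prop:nilpotent} and Proposition~\ref{prop:generationGamma}, so the argument is essentially a bookkeeping variant of the earlier proof.
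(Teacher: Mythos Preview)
Your proposal is correct and follows exactly the same approach as the paper: the paper's proof is a one-sentence remark that ``the same computation as in the proof of Proposition~\ref{prop:nilpotent} shows that the assignments $P_\ell(r) \mapsto \dtrans ijk{c-\ell}{d+e\ell}r$ and $y(s) \mapsto \trans jkes$ extend to the required isomorphism,'' and you have simply written out that computation in detail. Your invocation of Proposition~\ref{prop:generationGamma} to handle generation under the hypothesis that $c!$ is invertible is also what is implicitly being used.
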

\begin{proof}
The same computation as in the proof of Proposition~\ref{prop:nilpotent} shows that the assignments $P_\ell(r) \mapsto \dtrans ijk{c-\ell}{d+e\ell}r$  and $y(s) \mapsto \trans jkds$  extend to the required isomorphism.
\end{proof}

\section{A family of Kazhdan groups of tame automorphisms}\label{sec:Kazhdan}

Let $n \geq 3$, let $e_1, \dots, e_n \geq 1$ be positive integers and $R$ be a commutative ring with a unit. We set $\mathbf e = (e_1, \dots, e_n)$.
Our main object of study is the subgroup $G_{R,\mathbf{e}}$ of $ \Aut(R_n)$ defined by 
$$
G_{R,\mathbf{e}} = \left\langle 
\trans i{i+1}{e_i}r \, \big| \,i=1, 2, \dots, n;\  r\in R 
\right\rangle, 
$$
where indices are taken modulo~$n$. The number $n$ is called the \textbf{rank} of the group $G_{R,\mathbf{e}}$. To lighten the notation, we set 
$\tau_i(r) = \trans i{i+1}{e_i}r$ and $X_i = \langle \tau_i(r) \rangle = \Trans i{i+1}{e_i}$ 
for all $i \mod n$. 

Our next goal is to establish the following.

\begin{thm}\label{thm:T-for-G}
Let $n \geq 3$ and $e_1, \dots, e_n \geq 1$ be positive integers. 
Let $p$ be an integer such that $(p-1)!$ is invertible in $R$, $p> e_i$ for all $i$ and that 
$$
M = \max\left\{\sqrt{e_i/p} + \sqrt{e_{i+1}/p}\, \big|\, i=1, \dots, n\right\}<1,
$$
where indices are taken modulo~$n$. For each $i\in \{1, \dots, n\}$, let $X_i =\Trans i{i+1}{e_i}$.
Then the Kazhdan constant of the  group $G_{R,\mathbf{e}}$ with respect to the generating set 
$S = \bigcup_i X_i$ is bounded below by
$$
\kappa(G_{R,\mathbf{e}}, S ) \geq 
\sqrt{\frac{2(1-M)}{n}} >0.
$$
If in addition $|R| = q < \infty$, then 
$$
\kappa(G_{R,\mathbf{e}}, S ) \geq 
\sqrt{\frac{2q}{q-1}} \sqrt{\frac{1-M}{n}}.
$$
In particular, if $R$ is  finite 
and if $p > 4 \max \{ e_i\}$, 
then $G_{R,\mathbf{e}}$  has  property (T).
\end{thm}
\begin{proof}
Each $X_i$ is a subgroup of $G=G_{R,\mathbf{e}}$ isomorphic to the additive subgroup of $R$.
For any $i, j$ with $|i-j| \geq 2$ the subgroups $X_i$ and $X_j$ commute, thus the representation angle between them is $0$.
The subgroup $\langle X_i, X_{i+1}\rangle$ is isomorphic to $\Gamma_{e_i,R}$ by Proposition~\ref{prop:nilpotent}, thus the cosine of the  representation angle is 
bounded above by $\sqrt{e_i/p}$ by Theorem~\ref{thm:rep-angle-improved}.
This allows us to invoke Kassabov's results from \cite{K} recalled in Section~\ref{sec:rep-angle}. 
To this end, consider a unitary representation $(\pi, V)$ of $G$ that does not contain any non-zero invariant vector. Let $\varepsilon >0$ and $v \in V$ be a unit vector that is $(S, \varepsilon)$-invariant. Let $d_i$ denote the distance from $v$ to the subspace $V^{X_i}$ of $X_i$-invariant vectors. Since $(\pi, V)$  does not contain any non-zero $G$-invariant vector, we deduce from \cite[Theorem~5.1]{K} that 
$$1 \leq (d_1, \dots, d_n) A^{-1} (d_1, \dots, d_n)^\top,$$ 
where $A$ is the real matrix defined by 
$$A = 
\left(
\begin{array}{cccccc}
1 & -\sqrt{e_1/p} & 0 & \dots & 0 & -\sqrt{e_n/p} \\ 
-\sqrt{e_1/p} & 1 & -\sqrt{e_2/p} & \dots & 0 & 0 \\
0 & -\sqrt{e_2/p} & 1 & \dots & 0 & 0 \\
\vdots & \vdots & \vdots &  \ddots & \vdots & \vdots \\ 
0 & 0 & 0 &\dots & 1 & -\sqrt{e_{n-1}/p}\\
-\sqrt{e_n/p} & 0 & 0 & \dots & -\sqrt{e_{n-1}/p} & 1 
\end{array}
\right).
$$
 In view of Proposition~\ref{prop:pos-def-matrix}(i), we know that the smallest eigenvalue of $A$, say $\lambda_{\min}$, satisfies $\lambda_{\min} \geq 1-M$. By the definition of $v$, any two vectors in the $X_i$-orbit of $v$ are at distance at most~$\varepsilon$ apart, so that $d_i \leq \varepsilon/ \sqrt{2} $ for all $i$. We infer that 
 \begin{align*}
     1 &\leq (d_1, \dots, d_n) A^{-1} (d_1, \dots, d_n)^\top\\
     & \leq \frac{1}{2} \varepsilon^2 \|(1, \dots, 1)\|^2 \lambda_{\min}^{-1}\\
     & \leq \frac{1}{2} \varepsilon^2 n (1-M)^{-1}.
 \end{align*}
 This directly implies that the Kazhdan constant satisfies 
 $\kappa(G_{R,\mathbf{e}}, S ) \geq \varepsilon \geq 
\sqrt{\frac{2(1-M)}{n}}$.

Assume now that $R$ is finite of order $q$. Then $X_i$ is a group of order $q$ for all $i$, and the $X_i$-orbit of $v$ is contained in an affine subspace of $V$ of dimension~at most $q-1$. By Jung's theorem \cite{Jung}, in the $(q-1)$-dimensional Euclidean space, the circumradius $R$ of any subset of diameter $D$ satisfies the inequality $R \leq D \sqrt{\frac{q-1}{2q}}$. Therefore, in the present case, the distance $d_i$ satisfies $d_i \leq \varepsilon \sqrt{\frac{q-1}{2q}}$. Now the inequality
$\kappa(G_{R,\mathbf{e}}, S ) \geq 
\sqrt{\frac{2q}{q-1}} \sqrt{\frac{1-M}{n}}$ follows from the same argument as above.
\end{proof}

The previous result does not give property (T) when the ring $R$ is infinite, since the generating set $S$ is not finite. 
However, it is known~\cite{EJ,Shalom} that the subgroup $\EL_n(R) \leq \SL_n(R)$ generated by elementary matrices has Kazhdan's property $(T)$ provided $R$ is finitely generated and  $n\geq3$. Observe that $\EL_n(R)$ may be viewed as the subgroup of $\Aut(R[x_1,\dots, x_n])$ generated by the set $\{\alpha_{i; j}^{(1)}(r) \mid 1\leq i \neq j \leq n, \ r \in R\}$. 
Relying on the fact that $\EL_n(R)$ has property (T),  we establish  the following.  

\begin{thm}
\label{thm:T-bis}
For any $n\geq 3$ and any finitely generated ring $R$, such that $30=2\times 3 \times 5$ is invertible in $R$, the subgroup $G_{n,R}$ of
$\Aut(R[x_1,\dots, x_n])$ generated by $\EL_n(R)$ and $\trans 1221$ has Kazhdan property (T).
If $n \geq 4$, the same conclusion holds as soon as $6$ is invertible in $R$. 
\end{thm}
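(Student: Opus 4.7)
My plan is to follow the same high-level strategy as in Theorem~\ref{thm:T-for-G}, except that one of the ``building-block'' subgroups is no longer a nilpotent $\Gamma_{c,R}$ but rather $\EL_n(R)$ itself, whose property (T) is provided by Shalom~\cite{Shalom} (for $\Q$-algebras) and Ershov--Jaikin-Zapirain~\cite{EJ} (in general). Concretely, I would invoke Kassabov's angle-matrix criterion from \cite{K} applied to a suitable family of subgroups of $G_{n,R}$ whose representation angles are bounded via Theorem~\ref{thm:rep-angle-improved}. The output is that property (T) follows once the associated Gram-like matrix of cosines is positive definite, which in turn reduces (by Proposition~\ref{prop:pos-def-matrix}) to an arithmetic condition on the size of the smallest non-invertible prime in $R$.

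A preliminary step is to verify that, starting from $\EL_n(R)$ and the single element $\trans 1221$, one already generates inside $G_{n,R}$ the full transvection subgroup $\Trans 122 = \{\trans 122r : r \in R\}$, and hence (by conjugating with coordinate permutations lying in $\EL_n(R)$) all subgroups $\Trans ij2$. This should be extracted from commutators of the type $[\trans 1221, \trans 23 1 r]$ and the fact, recorded in Proposition~\ref{prop:nilpotent:variant}, that pure-transvection components can be isolated inside the resulting copy of $\Gamma_{2,R}$.

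Once the degree-$2$ transvection subgroups are visible inside $G_{n,R}$, Proposition~\ref{prop:nilpotent} identifies several subgroups of the form $\langle \Trans ijc, \Trans jk1\rangle \cong \Gamma_{c,R}$ whose internal representation angle cosine is bounded by $\sqrt{c/p}$ (Theorem~\ref{thm:rep-angle-improved}), where $p$ is the smallest prime not invertible in $R$. I would then take as generating family $\EL_n(R)$ together with a short cycle of such nilpotent subgroups, and bound cross angles between them either through the shared inclusion in the bigger group or through Shalom's quantitative (T) for $\EL_n(R)$. The matrix criterion of Proposition~\ref{prop:pos-def-matrix}(iii) (with $\beta$ encoding the ``degree-$2$ edge'' and $\alpha$ the ``degree-$1$ edges'') then yields property (T) as soon as the requisite inequality between $\alpha = \sqrt{1/p}$ and $\beta = \sqrt{c/p}$ holds.

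I expect the main obstacle to be the case $n=3$: the absence of a fourth index makes the generating cycle tight and seems to force the use of nilpotent subgroups of larger class, up to roughly class $4$ or $5$, which forces $5!$ (and hence all of $2,3,5$) to be invertible in $R$ — exactly the hypothesis that $30$ is invertible. For $n \geq 4$, the extra coordinates make it possible to route the degree-$2$ generator through a $\Gamma_{\leq 3, R}$ subgroup and to keep every nilpotency class bounded by~$3$, so that only $6 = 3!$ need be invertible. The bookkeeping of verifying the positive-definiteness of the resulting matrix in the $n=3$ case (with both kinds of entries appearing) is where the technical work will concentrate, but no new conceptual ingredient beyond the tools already developed in Sections~3--4 of the paper should be required.
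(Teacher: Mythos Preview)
Your broad strategy --- combine property~(T) for $\EL_n(R)$ with Kassabov's angle criterion, bounding the relevant angles via Theorem~\ref{thm:rep-angle-improved} --- is the paper's strategy too, but two of your concrete choices would not go through as stated.

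First, placing $\EL_n(R)$ itself as one of the subgroups in the angle matrix is circular. To bound $\varepsilon(\langle \EL_n(R), \Trans ij2\rangle;\,\EL_n(R),\,\Trans ij2)$ you would need control over representations of $\langle \EL_n(R), \Trans ij2\rangle$, which is already the whole of $G_{n,R}$; and property~(T) for $\EL_n(R)$ furnishes a Kazhdan constant, not a representation-angle bound against an external subgroup. The paper sidesteps this by using~(T) for $\EL_n(R)$ purely as a \emph{bootstrapping device}: given a vector $\varepsilon$-almost invariant under a finite generating set, one first upgrades it (via the Kazhdan constant of $\EL_n(R)$) to being $C\varepsilon$-almost invariant under all of $\EL_n(R)$; the single commutator identity $[\trans 311r,\trans 1221]\in\Trans 322$ (note: the variable $r$ sits in the \emph{linear} transvection, which is why one immediately recovers the full subgroup $\Trans 322$) together with conjugation by permutation matrices then makes $v$ almost invariant under every $\Trans ij2$. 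Only \emph{after} this does the paper apply the angle criterion, and to a cycle of \emph{abelian} subgroups $Y_1=\Trans 121,\ldots,Y_{n-1}=\Trans{n-1}n1$, $Y_n=\Trans n11\,\Trans n12\cong R^2$, which together generate $G_{n,R}$.

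Second, your explanation of the $30$-versus-$6$ dichotomy is not the right one. The paper never needs nilpotent groups of class exceeding~$3$: the pairs $(Y_i,Y_{i+1})$ for $i<n-1$ are Heisenberg, $(Y_{n-1},Y_n)$ is class~$2$, and $(Y_1,Y_n)$ is class~$3$, with cosine bounds $\sqrt{1/p}$, $\sqrt{1/p}$, $\sqrt{2/p}$ respectively. The split between $n=3$ and $n\geq 4$ is purely the numerics of Proposition~\ref{prop:pos-def-matrix}: for $n=3$ only part~(i) is available, and $\sqrt{1/p}+\sqrt{2/p}<1$ forces $p\geq 7$; for $n\geq 4$ the sharper criterion in part~(iii) applies and already succeeds at $p=5$.
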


\begin{proof}
Let $p$ denote the smallest prime which is not invertible in the ring $R$.
By hypothesis, we have  $p \geq 7$.  
Let $S'$ be a finite generating set of $G_{n,R}$, which contains a generating set $S''$ of $\EL_n(R)$ and $\trans 1221$, and let
$\rho$ be a representation of $G_{n,R}$ on a Hilbert space $\mathcal{H}$ with an $\varepsilon$-almost invariant unit vector $v$ under $S$, for some sufficiently small $\varepsilon$. 
Using Kazhdan's property (T) for $\EL_n(R)$, we  deduce from Lemma~\ref{lem:almost-invariant} that the vector $v$ is $C_n\varepsilon$-almost invariant under the whole group $\EL_n(R)$ for some constant $C_n \geq 1$. In particular $v$
is almost fixed by the elementary matrices $\trans ij1r$.

We claim that there exists constant $D_n \geq   C_n$ such that $v$ is $D_n\varepsilon$ almost invariant under the subgroups  
$\Trans ije$ for $1 \leq i,j \leq n$ for all $i\not=j \in \{1, \dots, n\}$ and all $e \in \{1,2\}$. Indeed,  for $e=1$, we have $\trans ij1r \in \EL_n(R)$, so that $\Trans ij1$ almost fixes the vector $v$. For $e=2$, we first remark that $v$ is $\varepsilon$-almost invariant under $\trans 1221$  by the assumption that $S'$ contains  that element. Using that
$\trans 322r = \left[\trans 311r, \trans 1221\right] $, we see that  all elements in $\Trans 322$ can be expressed by products of at most~$4$ group elements which almost fix $v$. This implies that $v$ is $(2C_n+2)\varepsilon$-almost invariant under $\Trans 322$. Finally, for any pair $i\neq j \in \{1, \dots, n\}$, the group $\Trans ij2$ is conjugate to $\Trans 322$ by a suitable permutation matrix, which belongs to  $\EL_n(R)$. Again, this implies that all elements in $\Trans ij2$ can be expressed by products of three group elements which almost fix $v$. The claim follows, with 
$D_n= 4C_n +2$.

We next observe that the group $G_{n,R}$ is generated by  the abelian subgroups $Y_1= \Trans 121$, $Y_2=\Trans 231$,\dots, $Y_{n-1}= \Trans {n-1}n1$ and $Y_n = \Trans n11 \Trans n12 \cong R^2$.
Let us now estimate the cosine of the representation angle $\varepsilon(\langle Y_i,Y_{j} \rangle, Y_i,Y_{j})$ for all $i \neq j$.

If $j \not= i\pm 1$ (modulo $n$) then $Y_i$ and $Y_j$ commute thus 
$\varepsilon(\langle Y_i,Y_j \rangle, Y_i,Y_j) =0$. 

If $j = i+1$ for $i=1,\dots n-2$ then the group generated by $Y_i$ and $Y_{i+1}$ is the Heisenberg group therefore  
$\varepsilon(\langle Y_i,Y_{i+1} \rangle, Y_i,Y_{i+1})  \leq \sqrt{1/p} $, by Theorem~\ref{thm:rep-angle-improved} (see also~\cite[\S 4]{EJ}).

In the case $i=n-1$ and $j=n$, similar (but easier) computations as in the proof of Proposition~\ref{prop:nilpotent} show that the group generated by $Y_{n-1}$ and $Y_n$ is
nilpotent of class $2$ with abelianization $R^3$ and commutator subgroup is $\Trans {n-1}11 \Trans {n-1}12 \cong R^2$, thus 
$\varepsilon(\langle Y_{n-1},Y_n \rangle, Y_{n-1},Y_n)  \leq \sqrt{1/m(R)}$, where $m(R)$ is the smallest index of a proper ideal in $R$, see \cite[Cor.~4.4]{ER18}. By the definition of $p$, we have $m(R) \geq p$. Therefore $\varepsilon(\langle Y_{n-1},Y_n \rangle, Y_{n-1},Y_n)  \leq \sqrt{1/p}$.

The final case is $i=1$ and $j=n$, in this case the group 
generated by $Y_1$ and $Y_n$ is nilpotent of class $3$ with abelianization $R^3$ and commutator subgroup $\Trans n21 \Trans n22 \dTrans n1211 \cong R^3$, and center $\Trans n21 \Trans n22\cong R^2$. Set $H = \langle \langle Y_1,Y_n \rangle$. We claim that 
$$\varepsilon(H, Y_1,Y_n)  \leq \sqrt{2/p}.$$ 
This can be deduced from Theorem~\ref{thm:rep-angle-improved} as follows.

Set $Y = Y_1$, $X_0 = \Trans n12$ and $X'_0 = \Trans n11$, so that $Y_n = X_0 X'_0$. Let also $X = \Trans n12 \Trans n22 \dTrans n1211 \cong R^3$ and $X' = \Trans n11 \Trans n21 \cong R^2$. By Proposition~\ref{prop:nilpotent}, we have $\langle X_0, Y\rangle \cong \Gamma_{2, R}$ and $\langle X'_0, Y\rangle \cong \Gamma_{1, R}$. Moreover $X X'$ is an abelian normal subgroup of $H$. Therefore, given a unitary representation $(\pi, V)$ of $H$ without any non-zero invariant vectors, the space $V$ has an $H$-invariant decomposition  as a direct sum
$$V = \big(V^X \cap V^{X'}\big) \oplus \big(V^X \cap (V^{X'})^\perp\big) \oplus \big((V^X)^\perp \cap V^{X'}\big)\oplus \big((V^X)^\perp \cap (V^{X'})^\perp\big).$$
To evaluate the representation angle, it therefore suffices to treat one summand at a time. Since $X$ or $X'$ or both act trivially on each of the first three summands, the $H$-action factors through $\langle X'_0, Y\rangle$ or $ \langle X_0, Y\rangle$, so the desired bound directly follows from Theorem~\ref{thm:rep-angle-improved} in each of those cases. To finish the proof of the claim, we may therefore assume that neither $X$ nor $X'$ have a non-zero fixed vector on $V$. In particular $\langle X_0, Y\rangle$ does not have any non-zero fixed vector on $V$, so that $$\cos\big(\sphericalangle(V^{X_0}, V^{Y})\big) \leq \sqrt{2/p}$$
by Theorem~\ref{thm:rep-angle-improved}. The claim follows since  $V^{Y_n} = V^{ X_0   X'_0} \subseteq V^{X_0}$


These bounds can be plugged into the main result in~\cite{K}, and we need to verify that the symmetric matrix $A$ with $A_{ii}=1$, $A_{i, i+1} = A_{i+1, i} = -1/\sqrt{p}$ and $A_{1, n} = A_{n, 1} = -\sqrt{2/p}$  is positive definite. This is indeed the case by Proposition~\ref{prop:pos-def-matrix}(i) as soon as $p \geq 7$. For $p=5$ the latter criterion does not apply, but if in addition  $n \geq 4$, then we may invoke Proposition~\ref{prop:pos-def-matrix}(iii)%
\footnote{Alternatively we can numerically compute the eigenvalues of the resulting $4 \times 4$ matrix and see that the smallest one is approximately $0.0037$.}
since $1/\sqrt 5 \approx 0.447$ and $(1-1/\sqrt 5)(1-\sqrt{2/5}) \approx 0.203> 1/5 = (1/\sqrt 5)^2$. This ensures that $A$ is positive definite for all $n \geq 4$ and  $p \geq 5$. 
This implies that $v$ is close to an invariant vector for the whole group $G_{n,R}$, which implies that $G_{n,R}$ has property (T).
\end{proof}

A careful tracking of all constants involved in the above argument shows that if $S''$ is a generating set of $\EL_3(R)$ and $30$ is inverible in $R$, then
$$
\kappa\left(G_{3,R}, S'' \cup \{\trans 1221\}\right) \geq \frac{1}{20}\kappa(\EL_3(R), S'').
$$

\section{Constructing polynomial transvections}

We shall now see that the group  $G_{R,\mathbf{e}}$ is  quite large as soon as $\max \{e_i\} \geq 2$. The following result ensures that this group contains the element 
$\trans ijtr$ for all integers $t$ satisfying some congruence condition.

\begin{thm}
\label{thm:transvections}
Let $n \geq 3$, let $\mathbf{e}=(e_1, \dots, e_n)$ be a tuple of possitive integers and set $c = \max\{e_1, \dots, e_n\}$. Let also $R$ be a commutative unital ring such that $c!$ is invertible in $R$. 
If
$$  E = e_1e_2\dots e_n \geq 2,$$ 
then, for each integer $m \geq 0$, the group
$G_{R,\mathbf{e}}$   contains the elements
$$
\trans ijtr \quad \mbox{with }t =t_{i,j}  +m (E-1),
$$
where $t_{i,j}= e_ie_{i+1}\dots e_{j-1}$ and indices are ordered cyclically, and taken modulo $n$.
\end{thm}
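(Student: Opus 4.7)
I plan to argue in two stages: Stage~1 establishes the base case $m=0$, and Stage~2 boosts the exponent by $E-1$ inductively on $m$.

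\textbf{Stage 1 (base case).} The idea is a reverse iteration that keeps the target index $j$ fixed and decreases the source index one step at a time. Start from $\tau_{j-1}(r)=\trans{j-1}{j}{e_{j-1}}{r}\in X_{j-1}$. The first fold combines $X_{j-2}$ with $X_{j-1}$ via Proposition~\ref{prop:nilpotent} applied with $c=e_{j-2}$, which is invertible since $e_{j-2}\leq c$: the top element ($\ell=e_{j-2}$) of the resulting $\Gamma_{e_{j-2},R}$ is $\trans{j-2}{j}{e_{j-2}e_{j-1}}{r'}\in G$. At each subsequent fold the current transvection $\trans{i'}{j}{t_{i',j}}{r}$ plays the role of the second subgroup $\Trans{i'}{j}{t_{i',j}}$ in Proposition~\ref{prop:nilpotent:variant}, while the first subgroup is $X_{i'-1}$ viewed as $\dTrans{i'-1}{i'}{j}{e_{i'-1}}{0}$; its exponent $e_{i'-1}$ satisfies $(e_{i'-1})!\mid c!$ so the hypothesis of the proposition is verified, and the top element is $\trans{i'-1}{j}{t_{i'-1,j}}{r''}\in G$. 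Iterating cyclically until the source index reaches $i$ yields $\trans{i}{j}{t_{i,j}}{r}\in G$ for every $r\in R$. The essential point is that in this reverse direction the first subgroup at every step has exponent one of the original $e_k$, never a large accumulated product; this is what makes the hypothesis $c!$ invertible suffice.

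\textbf{Stage 2 (inductive boost).} The reverse iteration of Stage~1 produces along the way not only the base transvections but an entire family of intermediate mixed-exponent elements $\dtrans{\cdot}{\cdot}{\cdot}{\cdot}{\cdot}{\cdot}$ (the outputs of Proposition~\ref{prop:nilpotent:variant} with $\ell\neq 0,e_{i'-1}$). The plan is to conjugate a suitably chosen mixed element $\delta=\dtrans{i}{j_1}{j_2}{a}{b}{r}\in G$ by a base-case transvection $\sigma=\trans{j_2}{j_1}{e'}{s}\in G$: the substitution $x_{j_2}\mapsto x_{j_2}+sx_{j_1}^{e'}$ inside $\delta$ yields on $x_i$ a pure $x_{j_1}$ leading term of exponent $a+be'$ plus mixed correction terms of strictly smaller $x_{j_1}$-exponent. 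Choosing $(a,b,j_2,e')$ so that $a+be'=t_{i,j_1}+(m+1)(E-1)$ and so that each correction monomial already lies in $G$ (either inductively or among the Stage~1 intermediates), one extracts $\trans{i}{j_1}{t_{i,j_1}+(m+1)(E-1)}{rs}\in G$ after cancelling the corrections. Iterating the construction closes the induction on $m$.

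\textbf{The main obstacle.} Stage~1 is a clean iteration once the right direction is identified. The technical heart of the proof is Stage~2: for each $(i,j)$ and each $m$ one must produce a mixed element $\delta$ and conjugator $\sigma$ satisfying three simultaneous requirements --- (a) the leading exponent is $t_{i,j}+(m+1)(E-1)$ with an invertible coefficient, (b) every invocation of Proposition~\ref{prop:nilpotent:variant} in the construction has first subgroup of exponent some $e_k$, so that only $c!$ invertibility is used, and (c) each correction monomial already lies in $G$ and is hence cancellable. Carrying out this combinatorial bookkeeping around the cycle is the delicate step.
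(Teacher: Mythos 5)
Your Stage~1 is correct and is essentially the paper's Step~2: both iterations decrement the source index using $X_i$ as the outer factor in Proposition~\ref{prop:nilpotent} (or its variant), so the exponent whose factorial must be invertible always stays among the $e_k$'s; the paper phrases its induction as ``on $j-i$'' but the actual inductive step passes from $\trans{i+1}{j}{t_{i+1,j}}{r}$ to $\trans{i}{j}{t_{i,j}}{r}$ exactly as you do.

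Stage~2 has a genuine gap. You correctly identify the mechanism (conjugate a mixed element by a Stage-1 transvection going the ``other way around the cycle'' to boost the pure exponent by $E-1$), but you do not supply the specific mixed elements, conjugators, or the organisation of the $m$-induction, and those choices are the whole content of the paper's proof (its Steps~3--7). Concretely, the paper also keeps from Stage~1 the intermediate outputs with $\ell=1$ and $\ell=e_k-1$, in particular $\dTrans i{i+1}{i+2}{e_i-1}{e_{i+1}} = \dTrans i{i+2}{i+1}{e_{i+1}}{e_i-1}$; pairing this with the cyclic Stage-1 transvection $\Trans{i+2}{i+1}{E/e_{i+1}}$ and applying Proposition~\ref{prop:nilpotent:variant} with outer exponent $e_{i+1}$ yields $\trans i{i+1}{e_i+(E-1)}r$; and the $m$-induction is then driven by two moves that both invoke the variant proposition with outer exponent $1$, so the factorial constraint never reappears. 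None of this follows from ``choose $(a,b,j_2,e')$ so that $a+be'$ is right'' --- the available bidegrees are quite restricted, and identifying a sequence of them that threads $t_{i,j}+m(E-1)$ through the cycle is precisely the hard part you flag as undone. Separately, ``cancelling the corrections'' is the wrong move and risks circularity: a raw conjugation $\sigma\delta\sigma^{-1}$ does produce lower-order cross terms, but to strip them away you would need to already know they lie in $G$, which is what you are trying to prove. The paper avoids this entirely: Proposition~\ref{prop:nilpotent:variant} asserts that the full copy of $\Gamma_{c,R}$ generated by $\delta$ and $\sigma$ lies in $G$, so the pure top transvection is extracted directly as a group element, with no subtraction of corrections.
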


The following consequence is immediate. 

\begin{coro}\label{cor:PolynomialTransvection}
Retain the hypotheses of Theorem~\ref{thm:transvections}. Then for each polynomial $P \in R[y]$, the group $G_{R,\mathbf{e}}$   contains the element
$\beta_{i;j}^P$ defined by the assignments
$$\left\{
\begin{array}{rcl}
    x_i & \mapsto & x_i + x_j^{t_{i, j}} P(x_j^{E-1})  \\
    x_\ell & \mapsto & x_\ell \qquad \text{for all } \ell \neq i.
\end{array}
\right.$$
\end{coro}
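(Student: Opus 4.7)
The plan is to deduce Corollary~\ref{cor:PolynomialTransvection} by writing $\beta_{i;j}^P$ as a product of the transvections supplied by Theorem~\ref{thm:transvections}. First I would write $P(y) = \sum_{m=0}^{d} a_m y^m$ with $a_m \in R$, so that
\[
x_j^{t_{i,j}} P(x_j^{E-1}) \;=\; \sum_{m=0}^{d} a_m\, x_j^{\,t_{i,j}+m(E-1)}.
\]

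Theorem~\ref{thm:transvections} then provides, for every $m \geq 0$, the element $\trans ij{t_{i,j}+m(E-1)}{a_m}$ in $G_{R,n;e_1,\dots,e_n}$. Each of these automorphisms fixes every variable other than $x_i$, and modifies $x_i$ by adding a monomial in $x_j$ alone. Consequently they all commute pairwise (a direct check: the composition of two such automorphisms, in either order, sends $x_i$ to $x_i$ plus the sum of the two monomial corrections, since neither correction involves $x_i$ and both leave $x_j$ untouched).

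Having established commutativity, I would form the (finite) product
\[
\prod_{m=0}^{d} \trans ij{t_{i,j}+m(E-1)}{a_m},
\]
which lies in $G_{R,n;e_1,\dots,e_n}$. Evaluating this product on the generators, we see it fixes $x_\ell$ for $\ell \neq i$ and sends $x_i$ to $x_i + \sum_{m} a_m x_j^{t_{i,j}+m(E-1)} = x_i + x_j^{t_{i,j}} P(x_j^{E-1})$. This is precisely the automorphism $\beta_{i;j}^P$, completing the argument. There is no real obstacle here: the corollary is a direct linearity-in-$P$ consequence of Theorem~\ref{thm:transvections} together with the trivial observation that transvections of the shape $\trans ij{\bullet}{\bullet}$, with fixed $(i,j)$ and varying exponent and coefficient, form an abelian subgroup of $\Aut(R_n)$.
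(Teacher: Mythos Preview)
Your argument is correct and is exactly the ``immediate'' derivation the paper has in mind: decompose $P$ into monomials, invoke Theorem~\ref{thm:transvections} to obtain each $\trans ij{t_{i,j}+m(E-1)}{a_m}$, and multiply these pairwise-commuting transvections to recover $\beta_{i;j}^P$. The paper does not spell out any details beyond declaring the corollary immediate, so there is nothing further to compare.
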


Those polynomial transvections $\beta_{i;j}^P$ will play a crucial role in the next section. 

\begin{coro}\label{cor:ElementaryAbelian}
Let $n \geq 3$, let $e_1, \dots, e_n \geq 1$ be integers and $p$ be a prime with $p > \max\{e_1, \dots, e_n\}$. Let also $R$ be an $\F_p$-algebra. If $ \max\{e_1, \dots, e_n\} > 1$, then $G_{R,\mathbf{e}}$   contains elementary abelian $p$-groups of infinte  rank. 
\end{coro}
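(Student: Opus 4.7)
The plan is to apply Corollary~\ref{cor:PolynomialTransvection} directly: it already hands us an infinite supply of elements of $G_{R,n;e_1,\dots,e_n}$ that visibly commute with each other and have order dividing $p$.

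First I would verify the hypotheses of Corollary~\ref{cor:PolynomialTransvection} (equivalently, of Theorem~\ref{thm:transvections}). Setting $c = \max\{e_1,\dots,e_n\}$, the assumption $p > c$ combined with $R$ being an $\F_p$-algebra guarantees that $c! = 1 \cdot 2 \cdots c$ is a product of units in $\F_p \subseteq R$, hence invertible in $R$; the hypothesis $c \geq 2$ moreover forces $E = e_1 e_2 \cdots e_n \geq 2$. Consequently, for any fixed ordered pair $i \neq j$ (in the cyclic order used in the definition of $G_{R,n;e_1,\dots,e_n}$) and every polynomial $P \in R[y]$, the automorphism $\beta_{i;j}^P$ lies in $G_{R,n;e_1,\dots,e_n}$.

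Next I would show that $P \mapsto \beta_{i;j}^P$ is an injective group homomorphism from $(R[y],+)$ into $G_{R,n;e_1,\dots,e_n}$. Since $\beta_{i;j}^P$ fixes every $x_\ell$ with $\ell \neq i$ and adds $x_j^{t_{i,j}} P(x_j^{E-1})$ to $x_i$, composing two such maps simply adds their contributions, so $\beta_{i;j}^P \circ \beta_{i;j}^Q = \beta_{i;j}^{P+Q}$; injectivity follows because distinct polynomials in $y$ yield distinct polynomials in $x_j$. In particular the image of this homomorphism is an abelian subgroup of $G_{R,n;e_1,\dots,e_n}$.

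Finally, since $R$ is an $\F_p$-algebra, $(R[y], +)$ is an $\F_p$-vector space, and the monomials $1, y, y^2, \dots$ are $\F_p$-linearly independent in it. Their images $\beta_{i;j}^{y^k}$, for $k \geq 0$, therefore span an elementary abelian $p$-subgroup of $G_{R,n;e_1,\dots,e_n}$ of infinite rank, as required. There is essentially no obstacle in this argument: once Corollary~\ref{cor:PolynomialTransvection} is available, only the additivity of $P \mapsto \beta_{i;j}^P$ and its injectivity need to be checked, and both are immediate from the explicit formula.
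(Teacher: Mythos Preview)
Your proof is correct and follows essentially the same approach as the paper's: both fix a pair of indices $i \neq j$ and observe that the polynomial transvections $\trans i j t r$ with $t = t_{i,j} + m(E-1)$ (equivalently, your $\beta_{i;j}^P$) commute pairwise, have order dividing $p$, and are independent for distinct exponents. The only difference is cosmetic---you cite Corollary~\ref{cor:PolynomialTransvection} and phrase the conclusion as an injective homomorphism from $(R[y],+)$, whereas the paper cites Theorem~\ref{thm:transvections} directly and restricts to coefficients in $\F_p$; both yield the desired infinite-rank elementary abelian $p$-subgroup.
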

\begin{proof}
This follows from Theorem~\ref{thm:transvections}. Indeed,  for fixed $i, j$, the group generated by 
$
\trans ijtr \quad \mbox{with }t =t_{i,j}  +m (E-1),
$
over all $m$ and all $r \in \mathbf F_p$ is elementary abelian of infinite rank.
\end{proof}

\begin{remark}
In the special case where $n=3$ and $e_i\leq 2$ for all $i$ and $R =\F_p$, these groups are quotients of KMS groups studied in~\cite[\S 7]{CCKW}. The corresponding epimorphisms for a couple of specific examples were described in \S\ref{sec:Hyp-Kazhdan}.  If $ \max\{e_1, e_2,e_3\} > 1$, it is shown in \cite[\S 7]{CCKW} that those KMS groups are hyperbolic. Corollary~\ref{cor:ElementaryAbelian} implies that the natural quotient map we have just mentioned cannot be injective. Indeed, in a hyperbolic group, the finite subgroups fall into finitely many conjugacy classes (see~\cite{Brady}). In particular the order of a finite subgroup is bounded above.
\end{remark}

\begin{proof}[Proof of Theorem~\ref{thm:transvections}]
We divide the proof into several steps. 

\begin{step}\label{st:1}
The theorem holds for $m=0$ and $j=i+1$.
\end{step}

Indeed, this is clear since the required elements are among the generators of $G$ by definition.

\begin{step}\label{st:2}
The theorem holds for $m=0$ and all indices $i \neq j$.
\end{step}

We fix the index $i$. By Step~\ref{st:1} we may assume that $j>i+1$ and proceed by induction on $j-i$.
By the induction hypothesis, the group $G$ contain the elements $\trans {i+1}j{t_{i+1,j}} r $.  We invoke Proposition~\ref{prop:nilpotent} applied to triple of indices $i, i+1, j$, and with the exponents $(c, d) = (e_i, t_{i+1, j})$. Since every positive integer $\leq e_i$ is invertible in $R$ by hypothesis, we deduce from the proposition that $G$ contains $\dtrans i{i+1}j{e_i-\ell}{\ell t_{i+1,j}}r$ for all $r \in R$ and all $\ell \in \{0, 1, \dots, e_i\}$. Taking $\ell = e_i$, we deduce that $G$  contains  
$\trans ij{e_i t_{i+1,j}}r = \trans ij{t_{i,j}}r $. This  completes the induction step. 

\begin{step}\label{st:3}
For all indices $i$, the group $G$ contains $\dTrans i{i+1}{i+2}{e_i-1}{e_{i+1}}$ and $\dTrans i{i+1}j1{(e_i -1)t_{i+1, j}}$. 
\end{step}

This follows by taking $\ell = 1$ and $\ell = e_i-1$ in the proof of the previous step.

\begin{step}\label{st:4}
The theorem holds for $m=1$ and $j=i+1$.
\end{step}

In view of Step~\ref{st:2}, we know that  $G$  contains 
$\Trans {i+2}{i+1}{t_{i+2,i+1}}$. Moreover, by Step~\ref{st:3} we also know that  $G$  contains    $\dTrans i{i+1}{i+2}{e_i-1}{e_{i+1}} = \dTrans i{i+2}{i+1}{e_{i+1}}{e_i-1}$.
Notice that $t_{i+2,i+1} = E/e_{i+1}$. Invoking  Proposition~\ref{prop:nilpotent:variant} to the triple of indices $i, i+2, i+1$ and the exponents $(c, d, e) = (e_{i+1}, e_i-1, E/e_{i+1})$, we deduce that $G$ contains the subgroup
$\dTrans i{i+2}{i+1}{e_{i+1}-\ell}{e_i-1+\ell E/e_{i+1}}$.

Taking $\ell = e_{i+1}$, it follows  that $G$ contains  
$\trans i{i+1}{C_i}r$ for all $r \in R$, where 
$$
C_i = e_i + (E-1). 
$$
This finishes the proof of Step~\ref{st:4}. 

\begin{step}\label{st:5}
For all $i$, the group $G$ contains 
$\dTrans i{i+1}{i+2}{e_i - E/e_{i+1} + E - 1}1 = \dTrans i{i+2}{i+1}1{e_i - E/e_{i+1} + E - 1}$.
\end{step}
This follows by taking $\ell = e_{i+1}-1$ in the proof of the previous step.

\begin{step}                \label{st:6}
If for some $m$ and some indices $i, j$ with $j \not \in \{i-1, i\}$, the group $G$ contains $\Trans ij{t_{i, j}+ m(E-1)}$, then $G$ also contains $\Trans {i-1}j{t_{i-1, j}+ m(E-1)}$.
\end{step}

Recall from Step~\ref{st:3} that $G$ contains $\dTrans {i-1}{i}j1{(e_{i-1} -1)t_{i, j}}$.  We may thus invoke  Proposition~\ref{prop:nilpotent:variant} to the triple of indices $(i-1, i, j)$ and the exponents 
$$(c, d, e) = \big(1, (e_{i-1} - 1)t_{i, j}, t_{i, j} + m(E-1)\big).$$ The required assertion follows.

\begin{step}                \label{st:7}
If for some $m$, the group $G$ contains $\Trans ij{t_{i, j}+ m(E-1)}$ for all indices $i \neq j$, then  $G$ also contains $\Trans i{i+1}{e_i+ (m+1)(E-1)}$.
\end{step}

Recall that $E/e_{i+1} = t_{i+2, i+1}$. 
Recall also from Step~\ref{st:5} that $G$ contains the subgroup $\dTrans i{i+2}{i+1}1{e_i-E/e_{i+1}+E-1}$.  We may thus invoke Proposition~\ref{prop:nilpotent:variant} to the indices $(i, i+2, i+1)$ and the exponents 
$$(c, d, e) = \big(1, e_i-E/e_{i+1} + E-1, E/e_{i+1}+m(E-1)\big).$$ 
It follows that $G$ also contains $\Trans i{i+1}{e_i+(m +1)(E-1)}$, as required.

\medskip

To finish the proof, we  use a double induction on $m$ and on $j-i$, applying alternatively Step~\ref{st:6} and~\ref{st:7}. The conclusion of each of those ensures that the hypothesis of the other is satisfied. The conclusion of the theorem follows.  
\end{proof}

\begin{remark}
The conclusions of Theorem~\ref{thm:transvections} can be strenghtened: indeed, adapting the proof, one can show that  $G_{R,\mathbf{e}}$ also contains elements 
$\dTrans ijk1t$ for any triples of distinct indices $i,j,k$ provided that $t$ satisfies some congruence condition modulo $E-1$. 
\end{remark}

The following definition will play an important role in the next section. 

\begin{definition}
\label{grading}
The ring  $R_n=R[x_1,\dots, x_n]$  has a \textbf{grading} by the cyclic group $\Z/(E-1)\Z$, defined as follows: for each $i \in \{1,\dots,n\}$, the degree of $x_i$ is  $t_{i, 1} = \prod_{j \geq i}^n e_j$. 
Let $R_n^{(s)}$ denote the homogeneous component of $R_n$ of degree $s$, clearly $R_n^{(0)}$ is a subring of $R$, containing, $x_i^{E-1}$, but it also contains elements like $x_i^tx_j$ when $t$ satisfies some congruence restriction mod $E-1$. Observe that each homogeneous component $R_n^{(s)}$ is a module over $R_n^{(0)}$.
\end{definition}

\begin{observation}
The action of the generators of $G$ preserve the grading of $R_n$ mentioned above, because the degree  of $x_i$ is the same as the degree of $x_{i+1}^{e_i}$, so that   each generator of $G$ indeed preserves the grading.  This implies that the modular restriction on $t$ in Theorem~\ref{thm:transvections} cannot be removed.
\end{observation}

\begin{remark}
\label{rem:more-transvections}
The result can be extended to show that the group $G$ contain many automorphisms  $\alpha_{i,f}$ mapping  $\alpha_{i,f}(x_i)= x_i +f$ where $f$ is some polynomial on the other variables, and $\alpha_{i,f}(x_\ell)=x_\ell$ for all $\ell \neq i$. However, we cannot get all such automorphisms: first there is some modular restriction on degrees of the monomials appearing in $f$, which is necessary in order for $\alpha_{i,f}$ to preserve the grading of $R_n$. 

The other issue is that Proposition~\ref{prop:nilpotent:variant} requires that every prime smaller than or equal to the exponent $c$ be invertible in $R$. This  forces that any monomial in $f$ to contain a variable of degree at most $c$. We do not know if this condition can be removed, equivalently we do not know if $G$
coincides with the tame automorphism group of the graded ring $R_n$, however it is clear that  $G$ is a very ``large'' subgroup of the group of tame automorphisms of the graded ring $R_n$. 
\end{remark}

\section{Constructing finite quotients}

\subsection{Action on  affine spaces}
From now on, we shall focus on the case $R = \mathbf F_p$. 
Our next goal is to construct finite quotients of the group $G_{\F_p,\mathbf{e}}$. Those  naturally arise as finite quotients of $\Aut(\F_p[x_1,\dots,x_n])$, by considering the spectrum of the ring $\F_p[x_1,\dots,x_n]$ as the points of an affine scheme which is simply  the $n$-dimensional affine space over the prime field $\F_p$. That viewpoint suggests the following construction. 

Given a commutative $\F_p$-algebra $A$, the set $A^n$ may be identified with the set of $\F_p$-algebra homomorphisms
$\Hom(\F_p[x_1,\dots,x_n], A)$. To an element $(a_1, \dots, a_n)$, one associates the evaluation map 
$$
f \in\F_p[x_1,\dots,x_n] \mapsto f(a_1, \dots, a_n) \in A.
$$
and to a homomorphism $\phi \in \Hom(\F_p[x_1,\dots,x_n], A)$, one associates the $n$-tuple $\big(\phi(x_1), \dots, \phi(x_n)\big)$. 
The group $\Aut(\F_p[x_1,\dots,x_n])$  acts on $\Hom(\F_p[x_1,\dots,x_n], A)$ by pre-composition: each automorphism $\alpha \in \Aut(\F_p[x_1,\dots,x_n])$ yields the map 
$$
\Hom(\F_p[x_1,\dots,x_n], A) \to \Hom(\F_p[x_1,\dots,x_n], A) : \phi \mapsto \phi \circ \alpha^{-1}.
$$
It is straightforward to check that this defines indeed a permutation action of the group $\Aut(\F_p[x_1,\dots,x_n])$. The group $\Aut(A)$ also acts on 
$\Hom(\F_p[x_1,\dots,x_n], A)$ by post-composition. Clearly the actions of $\Aut(A)$ and $\Aut(\F_p[x_1,\dots,x_n])$ commute. 

Using the natural  bijection $A^n \to \Hom(\F_p[x_1,\dots,x_n], A)$ recalled above, we see that an automorphism $\alpha \in \Aut(\F_p[x_1,\dots,x_n])$ acts on $A^n$ via the map
$$
A^n \to A^n : (a_1, \dots, a_n) \mapsto \big(\alpha^{-1}(x_1)(a_1, \dots, a_n), \dots, \alpha^{-1}(x_n)(a_1, \dots, a_n) \big).
$$
We will denote with $e_A:\Aut(\F_p[x_1,\dots,x_n]) \to \Sym(A^n)$
the resulting homomorphism to the symmetric groups of the set $A^n$.

By letting $A$ vary over the collection of finite-dimensional $\F_p$-algebras, we obtain numerous finite quotients of $\Aut(\F_p[x_1,\dots,x_n])$. 
By letting $A$ run over all finite field extensions of $\F_p$ and using  Hilbert’s Nullstellensatz one recovers the well known fact.

\begin{prop}\label{prop:RF}
The group $\Aut(\F_p[x_1,\dots,x_n])$ is residually finite.
\end{prop}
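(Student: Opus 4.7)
The plan is to use the family of permutation actions $e_A \colon \Aut(\F_p[x_1,\dots,x_n]) \to \Sym(A^n)$ constructed in the excerpt, specialized to the case where $A$ ranges over the finite field extensions of $\F_p$. Since each such $\Sym(A^n)$ is a finite group, it will suffice to check that for every non-identity $\alpha \in \Aut(\F_p[x_1,\dots,x_n])$ there exists a finite field extension $A=\F_q$ of $\F_p$ such that $e_A(\alpha) \neq \id$.

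To find such an $A$, I would first observe that if $\alpha \neq \id$, then $\alpha^{-1} \neq \id$, so there is some index $i \in \{1,\dots,n\}$ for which $P := \alpha^{-1}(x_i) - x_i$ is a non-zero element of $\F_p[x_1,\dots,x_n]$. The key point is then that a non-zero polynomial in $\F_p[x_1,\dots,x_n]$ cannot vanish identically on $\overline{\F_p}^n$. This is where the Nullstellensatz (or equivalently, a direct inductive argument using that $\overline{\F_p}$ is infinite) enters: there is some point $(a_1,\dots,a_n) \in \overline{\F_p}^n$ with $P(a_1,\dots,a_n) \neq 0$.

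Finally, since each $a_j$ is algebraic over $\F_p$, I would take $A = \F_p(a_1,\dots,a_n)$, which is a finite field extension of $\F_p$, and observe that $(a_1,\dots,a_n) \in A^n$. Unpacking the definition of the action recalled in the excerpt, the $i$-th coordinate of $e_A(\alpha)(a_1,\dots,a_n)$ equals $\alpha^{-1}(x_i)(a_1,\dots,a_n) = a_i + P(a_1,\dots,a_n) \neq a_i$, so $e_A(\alpha)$ moves $(a_1,\dots,a_n)$, and hence $e_A(\alpha) \neq \id$ in the finite group $\Sym(A^n)$.

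There is no real obstacle in this argument; the only subtlety worth flagging is that one genuinely needs to pass to extensions of $\F_p$, since distinct polynomials in $\F_p[x_1,\dots,x_n]$ may induce the same polynomial function on $\F_p^n$ itself (for instance $x^p$ and $x$). It is precisely the invocation of $\overline{\F_p}$ via the Nullstellensatz that ensures the map $\alpha \mapsto (e_A(\alpha))_A$ is injective when $A$ ranges over all finite extensions of $\F_p$, which is exactly residual finiteness.
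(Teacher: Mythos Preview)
Your proposal is correct and follows exactly the approach indicated in the paper: use the actions $e_A$ for finite field extensions $A$ of $\F_p$ and invoke Hilbert's Nullstellensatz to guarantee that any non-identity automorphism is detected by some such action. The paper's own proof is a one-line pointer to precisely this argument, so your write-up simply fills in the details.
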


In particular, the group $G = G_{\F_p,\mathbf{e}}$ is residually finite. For suitable $n$-tuples $\mathbf{e}$, this can actually be strengthened as follows. 

\begin{prop}\label{prop:Res-p}
Let  $k$ be a finite extension of $\F_p$, and let $\mathbf e = (e_1, \dots, e_n)$ be an $n$-tuple with $e_i \geq 1$ for all $i$.  If $\max\{e_1, \dots, e_n\} \geq 2$, 
then $G_{k,\mathbf{e}}$ is residually-$p$. 
\end{prop}
\begin{proof}
Let $\mathfrak m^0 = k[x_1, \dots, x_n]$, let $\mathfrak m$ be the ideal consisting of those polynomials with zero constant term, and define inductively $\mathfrak m^{d+1}$ as the ideal generated by $\{fg \mid f \in \mathfrak m^d, \ g \in \mathfrak m\}$. Clearly, the ideal $\mathfrak m^d$ is invariant under $\Aut(k[x_1,\dots,x_n])$; this yields a group homomorphism from $\Aut(k[x_1,\dots,x_n])$ to the automorphism group of the quotient algebra $k[x_1,\dots,x_n]/\mathfrak m^{d+1}$. The kernel of that homomorphism is denoted by $\Aut_d(k[x_1,\dots,x_n])$.

Let $k\llbracket x_1, \dots, x_n\rrbracket$ denote the algebra of formal power series in the indeterminates $x_1, \dots, x_n$ with coefficients in $k$. The natural embedding $k[x_1, \dots, x_n] \to k\llbracket x_1, \dots, x_n\rrbracket$ yields an injective homomorphism of $\Aut_i(k[x_1,\dots,x_n])$ into the group $\Aut_i(k\llbracket x_1,\dots,x_n\rrbracket)$ for all $i \geq 0$. It is known that $\Aut_1(k\llbracket x_1,\dots,x_n\rrbracket)$ is a pro-$p$ group (see \S 8.5 in \cite{Shalev} and \S2 in \cite{Veronelli}), so that $\Aut_1(k[x_1,\dots,x_n])$ is residually-$p$. Observe that $G = G_{k,\mathbf{e}}$ is a subgroup of $\Aut_1(k[x_1,\dots,x_n])$ if and only if $e_i \geq 2$ for all $i$. Here, the hypothesis only ensures that $e_i \geq 2$ for some $i$. Without loss of generality, we may assyme that $e_n \geq 2$.  To finish the proof, it suffices to show that  the image of $G$ under  the quotient map  $\phi \colon \Aut(k[x_1,\dots,x_n]) \to \Aut(k[x_1,\dots,x_n])/\Aut_1(k[x_1,\dots,x_n])$ is a finite $p$-group. Since $e_n \geq 2$, the image of $\tau_n(r) = \trans n{1}{e_n}r$  under $\phi$ is trivial.  Therefore, the image of 
$G   = \left\langle 
\trans i{i+1}{e_i}r \, \big| \,i=1, 2, \dots, n;\  r\in k 
\right\rangle$  under $\phi$ coincides with the image of $\left\langle 
\trans i{i+1}{e_i}r \, \big| \,i=1, 2, \dots, n-1;\  r\in k 
\right\rangle$. The latter group is a subgroup of $\mathrm{SL}_n(k)$ consisting of upper unitriangular matrices, and is therefore a finite $p$-group. Hence $\phi(G)$ is a finite $p$-group as well. 
\end{proof}

As mentioned in the introduction, the group $G_{\F_p,(1, 1, \dots, 1)}$  of rank~$n \geq 3$ is isomorphic to $\mathrm{SL}_n(\F_p)$, so the condition that $\max\{e_i\} \geq 2 $ cannot be removed in Proposition~\ref{prop:Res-p}. We also remark that if $p > \max\{e_1, \dots, e_n\}$, then $G_{\F_p, \mathbf e}$ contains an elementary abelian $p$-group of infinite rank by Corollary~\ref{cor:ElementaryAbelian}, so it is not virtually residually-$q$ for any prime $q \neq p$. 

To obtain a more precise description of specific finite quotients of the group $G = G_{\F_p,\mathbf{e}}$, we will use the  $\Aut(\F_p[x_1,\dots,x_n])$-action on $A^n$ mentioned above, in the  special case where the algebra $A$ is a finite field extension of $\F_p$. In other words 
$A$ is a finite field of order $q = p^s$ for some $s \geq 1$. 
The previous proposition actually gives that the group  $\Aut(\F_p[x_1,\dots,x_n])$ embeds into the product of $\prod_\ell \Sym_{p^{n\ell}}$ via the product of the maps $e_{\F_{p^\ell}}$. 
The images of $G_{\F_p,\mathbf{e}}$ under the homomorphisms $e_{\F_{p^\ell}}$ are not the full symmetric groups. 
Indeed, these groups are generated by elements of odd prime order $p$, so any permutation action of $G$ on a finite set consists of even permutations.
Our main results, Theorems~\ref{thm:transitivity}, \ref{thm:almost-k-transitivity} and Corollary~\ref{cor:Alt-quotient}, provide more information about these images. 
The proofs of  these results require some preparation. We will start with a small example which conveys the main idea but avoids most technical difficulties. 

\subsection{The prime field case}
Before considering the general case, we focus on the special case when $A = \F_p$ and the base field $R= \F_p$ does not contain any non-trivial $E-1$ roots of $1$. The latter obviously holds  for example when $E=2$.

\begin{thm}
\label{thm:k-transitivity-base-field}
Let $n \geq 3$ and $e_1, \dots, e_n \geq 1$ be integers such that $E = e_1 \dots e_n \geq 2$. Let $p$ be a prime with $p > \max \{e_1, \dots, e_n\}$, and such that the only root of $x^{E-1} =1$ in $\F_p$ is the trivial root $x=1$. Let $G = G_{\F_p,\mathbf{e}}$, where $\mathbf e =(e_1, \dots, e_n)$. 

Then the $G$-action on $\F_p^n$  fixes the point $0$ and acts $(p-1)$-transitively on $\F_p^n \setminus \{0\}$.
\end{thm}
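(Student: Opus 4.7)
The action of $G$ on $\F_p^n$ fixes $0$ trivially, since each generator sends $x_\ell$ to itself plus a polynomial vanishing at the origin. To establish $(p-1)$-transitivity on $\F_p^n\setminus\{0\}$, I plan to (a) extract a large family of ``generic transvections'' from Corollary~\ref{cor:PolynomialTransvection}, and then (b) use them to reduce any $(p-1)$-tuple of distinct nonzero points to a canonical configuration.

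For (a), the hypothesis that $x=1$ is the only root of $x^{E-1}=1$ in $\F_p$ is equivalent to $\gcd(E-1,p-1)=1$, so $a\mapsto a^{E-1}$ is a bijection on $\F_p^*$. Combined with $p>\max\{e_i\}$, which makes Corollary~\ref{cor:PolynomialTransvection} applicable, I claim that for every pair $i\neq j$ and every function $f\colon \F_p\to\F_p$ with $f(0)=0$, the transformation
\[
T_{i,j,f}\colon (a_1,\ldots,a_n)\longmapsto (a_1,\ldots,a_{i-1},\,a_i+f(a_j),\,a_{i+1},\ldots,a_n)
\]
belongs to $G$. Indeed, to realize $T_{i,j,f}$ as some $\beta_{i;j}^P$, the identity $a^{t_{i,j}} P(a^{E-1}) = -f(a)$ must hold on $\F_p^*$; after substituting $b=a^{E-1}$ and dividing by the nonzero factor $a^{t_{i,j}}$, this specifies $P$ at the $p-1$ nonzero points of $\F_p$, and Lagrange interpolation then yields a suitable $P\in\F_p[y]$.

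For (b), it suffices to show that every distinct tuple $v_1,\ldots,v_{p-1}\in\F_p^n\setminus\{0\}$ can be moved by some $g\in G$ to the canonical tuple $(1\cdot e_n,\,2\cdot e_n,\ldots,(p-1)\cdot e_n)$; applying the same reduction to a target $(w_s)$ and composing with the inverse yields a $g'\in G$ with $g'(v_s)=w_s$. My reduction proceeds in three sub-steps: (A) produce $g_1\in G$ so that $(g_1v_s)_{n-1}$ are pairwise distinct and all lie in $\F_p^*$; (B) apply $T_{n,n-1,f}$ with $f$ obtained by Lagrange interpolation (Lemma~\ref{lem:polynomial-map-k-tuple}) on the $p-1$ distinct nonzero values $(g_1v_s)_{n-1}$, so that the new $n$-th coordinate of each point becomes $s$; (C) for each $i<n$, apply $T_{i,n,f_i}$ with $f_i$ again supplied by interpolation on the now-distinct nonzero $n$-th coordinates, to zero out the first $n-1$ coordinates.

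Sub-step (A) is the main obstacle. My approach is iterative: decrease a ``defect count'' in the $(n-1)$-st column, where a defect is either a coincidence $(v_s)_{n-1}=(v_t)_{n-1}$ for some $s\neq t$ or a vanishing $(v_s)_{n-1}=0$. For a coincidence, distinctness of $v_s,v_t$ gives some $j\neq n-1$ with $(v_s)_j\neq(v_t)_j$; a transvection $T_{n-1,j,f}$ with $f$ supported at a single value of $\F_p^*$ then isolates the correction to the small set of points with that $j$-th coordinate. Vanishings are handled analogously, using that $v_s\neq 0$ guarantees a nonzero coordinate to feed into $f$. The assumption $n\geq 3$ ensures at least two alternative coordinates are available, which is critical for avoiding the creation of new defects. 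Verifying that a suitable complexity measure strictly decreases under this iteration is the delicate combinatorial ingredient; an alternative is a non-iterative genericity argument producing a single separable perturbation $T_{n-1,2,f_2}\circ\cdots\circ T_{n-1,n,f_n}$ that directly makes the $(n-1)$-st column distinct and nonzero, by exploiting the pairwise distinctness of the $v_s$ and the abundant freedom in choosing $(f_2,\ldots,f_n)$.
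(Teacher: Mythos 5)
Your part (a) is correct and is a clean reformulation of Corollary~\ref{cor:PolynomialTransvection} in the prime-field case: since $\gcd(E-1,p-1)=1$, the map $a\mapsto a^{E-1}$ is a bijection of $\F_p^*$, so any $f\colon\F_p\to\F_p$ with $f(0)=0$ is of the form $a\mapsto a^{t_{i,j}}P(a^{E-1})$ for a suitable $P\in\F_p[y]$ obtained by interpolating at the $p-1$ nonzero points. The paper uses the same bijectivity observation, routed through Lemma~\ref{lem:polynomial-map-k-tuple}. Sub-steps (B) and (C) of your part (b) are likewise correct, \emph{given} sub-step (A).

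Sub-step (A) is a genuine gap, and you acknowledge it yourself. The iterative ``decrease the defect count'' plan is not shown to terminate, and it is not clear that it can be patched. When you apply $T_{n-1,j,f}$ with $f$ supported at a single value $c$, you shift the $(n-1)$-st coordinate of the entire fiber $S=\{u:(v_u)_j=c\}$ by a common amount $\delta$, not just the two colliding points. If $|S|>1$, then avoiding new coincidences across the partition $S\sqcup S^c$ and avoiding new vanishings imposes on the order of $|S|\bigl(p-|S|\bigr)$ forbidden values of $\delta$, which can exceed $p-1$ already for $p\geq 5$; the naive union bound therefore fails. Your remark that $n\geq 3$ gives ``alternative coordinates'' does not resolve this, since a colliding pair may differ in only one coordinate $j\neq n-1$ and the fiber over that $j$-value may still be large. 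The ``non-iterative genericity'' alternative is not worked out either: the natural union bound over all pairwise and nonvanishing constraints again overshoots $p$.

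The paper's proof sidesteps this obstacle by reversing the order of your sub-steps: instead of first making the whole column distinct and nonzero and \emph{then} zeroing out the remaining coordinates, it processes the points one at a time and zeros out the free coordinates of each processed point \emph{before} touching the next one. Concretely, after $s$ points have been brought to the form $(0,\dots,0,c_i)$ with $c_1,\dots,c_s$ distinct and nonzero, any transvection $\alpha_{n;j}^{(t)}(r)$ with $j\neq n$ fixes those $s$ points automatically (their $j$-th coordinates are zero), so the $(s+1)$-st point can be moved to a fresh nonzero slot with no risk of disturbing the ones already placed. That is the content of the paper's Steps~2 and~3, and it makes your sub-step~(A) unnecessary. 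Re-ordering your argument along these lines repairs the proof and essentially reproduces the one in the paper.
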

\begin{proof}
Let $k\geq 1$ be an integer with $k \leq p-1$. We fix  a  standard $k$-tuple of vectors, denoted  $(\sigma_1, \dots, \sigma_k)$ and defined by 
$$\sigma_i =(i, 0, \dots, 0) \in \F_p^n.$$
Let now  $\phi_1,\dots, \phi_k$ be distinct elements in $\F_p^n \setminus \{0\}$. We must find an element of $G$ mapping that $k$-tuple to the standard one.  
We proceed in several steps. As we shall see, the key is to combine repeatedly  Lemma~\ref{lem:polynomial-map-k-tuple} with Corollary~\ref{cor:PolynomialTransvection}. Let us briefly discuss the respective assumptions of those results. 

We shall apply Lemma~\ref{lem:polynomial-map-k-tuple} to $k$-tuples of the form $(\mu_1^{E-1}, \dots, \mu_k^{E-1})$, with $\mu_i \in \F_p^*$. The hypothesis made on $\F_p$ ensures that the map $\F_p \to \F_p : x \mapsto x^{E-1} $ is injective. Hence if the $\mu_i$'s are pairwise distinct, then so are their $(E-1)^{\text{st}}$ powers. In particular those elements have pairwise distinct minimal polynomials over $\F_p$. This will ensure that the assumptions of Lemma~\ref{lem:polynomial-map-k-tuple} are fulfilled. By hypothesis, the assumptions of Theorem~\ref{thm:transvections} are equally satisfied. 

For $j \in \{1, \dots, n\}$, we denote the $j^{\text{th}}$ coordinate of an element $\psi \in \F_p^n$ by $\psi(j) \in \F_p$.

\setcounter{step}{0}
\begin{step}\label{st:f_p:1}
There exists $g \in G$ such that $g\phi_1(n) \neq 0$. 
\end{step}

We may assume that $\phi_1(n)=0$. 
Since the vector $\phi_1$ is non-zero, there exists $i \in \{1, \dots, n-1\}$ with $\phi_1(i) \neq 0$. By Theorem~\ref{thm:transvections}, we have $\trans nit1 \in G$ for some integer $t \geq 1$. The required assertion holds with $g = \trans nit1$.

\begin{step}\label{st:f_p:2}
Let $s \in \{1, \dots, k\}$. Assume that for some $\ell \in \{1, \dots, n\}$, the elements $\phi_1(\ell), \phi_2(\ell), \dots, \phi_s(\ell)$ are non-zero and pairwise distinct. Then there exists $g \in G$ such that $g\phi_i(\ell) = \phi_i(\ell)$  and $g\phi_i(j)=0$ for  all $i \in \{1, \dots, s\}$ and all $j \in \{1, \dots, n\} \setminus \{\ell\}$.
\end{step}

Let us first fix $j \in \{1, \dots, n\} \setminus \{\ell\}$. 
We use the notation $t_{j, \ell} = e_j \dots e_{\ell-1}$ from Theorem~\ref{thm:transvections}.  
Applying Lemma~\ref{lem:polynomial-map-k-tuple} to the $s$-tuples $(\mu_1, \dots, \mu_s)$  and $(\nu_1, \dots, \nu_s)$ defined by $\mu_i = \phi_i(\ell)^{E-1}$ and $$\nu_i = \frac{-\phi_i(j)}{\phi_i(\ell)^{t_{j, \ell}}},$$
we obtain a polynomial $f \in \F_p[x]$ with $f(\mu_i) = \nu_i$ for all $i=1, \dots, s$. We then invoke Corollary~\ref{cor:PolynomialTransvection}, ensuring the existence of a polynomial transvection  $g_j \in G$ fixing  the indeterminate $x_m$ for all indices $m \neq j$ and mapping $x_j$ to $x_j + x_\ell^{t_{j, \ell}} f(x_\ell^{E-1})$. By construction, we have  $g_j\phi_i(j) = 0$ for all $i \in \{1, \dots, s\}$.

Doing this for all $j \in \{1, \dots, n\} \setminus \{\ell\}$, we obtain elements $g_j$ that commute pairwise.   It follows that the required assertion holds with $g = \prod_{j \neq \ell} g_j$.

\begin{step}\label{st:f_p:3}
Let $s \in \{1, \dots, k-1\}$. Assume that for some $\ell \in \{1, \dots, n\}$, the elements $\phi_1(\ell), \phi_2(\ell), \dots, \phi_s(\ell)$ are non-zero and pairwise distinct. Then there exists $g \in G$ such that $g\phi_i(\ell)$ are non-zero and pairwise distinct for  all $i \in \{1, \dots, s, s+1\}$, and that $g\phi_i(j)=0$ for  all $i \in \{1, \dots, s, s+1\}$ and all $j \in \{1, \dots, n\}\setminus \{\ell\}$. 
\end{step}

We first invoke the previous step. We may thus assume that $\phi_i(j)=0$ for  all $i \in \{1, \dots, s\}$ and all $j \in \{1, \dots, n\}\setminus \{\ell\}$.  By hypothesis $\phi_{s+1}$ is non-zero and distinct from $\phi_1, \dots, \phi_s$. Therefore, if $\phi_{s+1}(j)=0$ for all  $j \neq \ell$, we are already done. Otherwise, we may assume that $\phi_{s+1}(j)\neq 0$ for some $j \neq \ell$. Let now $\nu$ be any non-zero element of $\F_p$ different from  $\phi_1(\ell), \phi_2(\ell), \dots, \phi_s(\ell)$. Let $f \in \F_p[x]$ be the polynomial defined by $f(x) = x + \frac{\nu - \phi_{s+1}(\ell)}{\mu^{t_{\ell, j}}} - \mu^{E-1}$, where $\mu = \phi_{s+1}(j)$. Hence we have  $f(\mu^{E-1})= \frac{\nu - \phi_{s+1}(\ell)}{\mu^{t_{\ell, j}}}$.   We then invoke Corollary~\ref{cor:PolynomialTransvection}, ensuring the existence of an element $h \in G$  of the form $h = \trans \ell j{t_{\ell, j}}r$ such that $h\phi_i(\ell) = \phi_i(\ell)$  for all $i=1, \dots, s$ and $h\phi_{s+1}(\ell) = \nu$.  We finish by invoking again the previous step. 

\begin{step}
End of the proof. 
\end{step}

By Step~\ref{st:f_p:1} we may assume that $\phi_1(n) \neq 0$. Now we use induction on $s$ and Step~\ref{st:f_p:3} with $\ell = n$. This proves that, after transforming by some element of $g \in G$, we may assume that the elements $\phi_1(n), \phi_2(n), \dots, \phi_k(n)$ are non-zero and pairwise distinct, and moreover $\phi_i(j) = 0$ for all $i=1, \dots, k$ and all $j < n$. 

We then invoke Lemma~\ref{lem:polynomial-map-k-tuple} to the $k$-tuples $(\mu_1, \dots, \mu_k)$  and $(\nu_1, \dots, \nu_k)$ defined by $\mu_i = \phi_i(n)^{E-1}$ and 
$$\nu_i = \frac{i}{\phi_i(n)^{t_{1, n}}}.$$ 
This yields  a polynomial $f \in \F_p[x]$ with $f(\mu_i) = \nu_i$ for all $i=1, \dots, s$. We then apply Corollary~\ref{cor:PolynomialTransvection} to construct a polynomial transvection $h \in G$ fixing  the indeterminate $x_m$ for all $m \neq 1$ and mapping $x_1$ to $x_1 + x_n^{t_{1, n}} f(x_n^{E-1})$. It follows that $h\phi_i(1) = i$ for all $i=1, \dots, k$. By applying Step~\ref{st:f_p:2} with $\ell = 1$, we may now apply another element $g \in G$ so that $gh\phi_i(1) = i$ and $gh\phi_i(j)= 0$ for all  $i \in \{1, \dots, k\}$ and $j \in \{2, \dots, n\}$. It follows that $gh\phi_i = \sigma_i$ for all $i$, and we are done. 
\end{proof}

\begin{coro}\label{cor:Alt-base-field}
Let $n \geq 3$ and $e_1, \dots, e_n \geq 1$ be integers such that $E = e_1 \dots e_n \geq 2$. Let $p$ be a prime with $p > E$
and such that $p-1$ and $E-1$ are relatively prime. 

Then $G = G_{\F_p,\mathbf{e}}$ maps  onto  $\Alt(p^n-1)$.
\end{coro}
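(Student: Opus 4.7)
The plan is to reduce the corollary to Theorem~\ref{thm:k-transitivity-base-field} together with the classical CFSG-based classification of highly transitive finite permutation groups.

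First I would verify that the hypothesis of Theorem~\ref{thm:k-transitivity-base-field} is satisfied. The multiplicative group $\F_p^*$ is cyclic of order $p-1$, so the number of solutions to $x^{E-1}=1$ in $\F_p^*$ equals $\gcd(p-1,E-1)$. The coprimality assumption then forces this $\gcd$ to be~$1$, meaning that $x=1$ is the only solution in $\F_p$. Since also $p>E\geq \max\{e_1,\dots,e_n\}$, Theorem~\ref{thm:k-transitivity-base-field} applies directly and shows that $G$ fixes the origin and acts $(p-1)$-transitively on the set $\Omega=\F_p^n\setminus\{0\}$ of cardinality $p^n-1$.

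Next I would show that the image of $G$ in $\Sym(\Omega)$ is contained in the alternating group $\Alt(\Omega)$. Each generator $\tau_i(r)=\trans{i}{i+1}{e_i}{r}$ has order dividing $p$, and because $p>E\geq 2$ the prime $p$ is odd. As a permutation of the finite set $\Omega$, any element of order $p$ decomposes into a disjoint union of $p$-cycles together with fixed points; a $p$-cycle is an even permutation for odd $p$, so every generator of $G$ acts as an even permutation, and the same holds for the whole image.

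Finally I would combine these two facts with the classification-based theorem (cf.~\cite[Th.~4.11]{Cameron}) asserting that any finite $4$-transitive permutation group on a set of cardinality $\geq 25$ is the full alternating or symmetric group. Since $p>E\geq 2$ and $\gcd(p-1,E-1)=1$, the only interesting cases have $p\geq 5$, hence $p-1\geq 4$, so the $G$-action is in particular $4$-transitive; and $|\Omega|=p^n-1\geq 5^3-1=124\geq 25$. The cited theorem forces the image of $G$ to contain $\Alt(\Omega)$, and the second paragraph shows the image cannot be larger, so the image equals $\Alt(p^n-1)$ as required.

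I do not expect a genuine obstacle: the technical content has been absorbed by Theorem~\ref{thm:k-transitivity-base-field}, and the remaining steps are a one-line coprimality check, a parity argument, and a quotation of the standard CFSG consequence. If one wished to avoid CFSG, the remark following Theorem~\ref{thm:Alt-quotient-intro} indicates that Pyber's bounds on the transitivity degree needed to force an alternating section would suffice in place of \cite[Th.~4.11]{Cameron}.
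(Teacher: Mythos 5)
Your argument breaks down at the claim that the hypotheses force $p\geq 5$: they do not. Take $n=3$, $(e_1,e_2,e_3)=(2,1,1)$, so $E=2$, and $p=3$. Then $p>E$ holds, $E\geq 2$ holds, and $\gcd(p-1,E-1)=\gcd(2,1)=1$, so the corollary applies and asserts a surjection onto $\Alt(26)$. But in this case Theorem~\ref{thm:k-transitivity-base-field} only yields $(p-1)=2$-transitivity, which is well below the $4$-transitivity you need to invoke \cite[Th.~4.11]{Cameron}. So your reduction does not cover the case $p=3$, $E=2$, and you cannot dismiss it as ``not interesting''.

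Everything else in your write-up matches the paper's proof: the coprimality check showing $x\mapsto x^{E-1}$ is injective on $\F_p$, the parity observation that $G$ is generated by elements of odd order $p$ so its image lies in $\Alt(\Omega)$, and the appeal to the classification of $4$-transitive groups in the range $p\geq 5$. What the paper adds, and what you are missing, is a separate treatment of $p=3$: there one only has $2$-transitivity, so the authors exhibit a polynomial transvection in $G$ whose image fixes more than $\tfrac{4}{7}$ of the points of $\F_3^n\setminus\{0\}$, invoke Guralnick--Magaard to conclude that the generalized Fitting subgroup of the image is a product of alternating groups, use Burnside's theorem on minimal normal subgroups of $2$-transitive groups to pin down the socle, and then apply the classification of finite $2$-transitive groups. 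To make your proposal correct you would either need to reproduce something like this argument for $p=3$, or restrict the corollary's hypotheses to $p\geq 5$.
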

\begin{proof}
By Theorem~\ref{thm:k-transitivity-base-field},  the $G$-action on the set  $\F_p^n \setminus \{0\}$  of cardinality $p^n-1$ is $(p-1)$-transitive. 

Assume first that $p \geq 5$. Hence the $G$-action is $4$-transitive.
Recall that a finite $4$-transitive group on a set of cardinality
~$\geq 25$ is the full alternating or symmetric group on that set (see~\cite[Th.~4.11]{Cameron}). By hypothesis, we have  $p^n-1  \geq 25 $. Moreover 
$G_{\F_p,\mathbf{e}}$ does not have any quotient isomorphic to $\Sym(n)$ for $n \geq 2$ because it is generated by elements of odd 
order. The conclusion follows in this case. 

Assume now that $p =3$. By the above,  we know that the image of $G$ in $\Sym(\F_3^n \setminus \{0\})$, that we shall denote by $H$, is $2$-transitive.  Since $p> E \geq 2$, we have $E=2$. 
Without loss of generality, we may assume that $e_1=1$. By Corollary~\ref{cor:PolynomialTransvection}, there exists a polynomial transvection $g \in G$ acting on $\F_3^n$ as 
$$
g \colon (a_1, \dots, a_n) \mapsto (a_1-a_2 + a_2^2, a_2, \dots, a_n).
$$
In particular, the only points that are not fixed by $g$ satisfy $a_2 = 2$. Thus $H$ contains a non-trivial permutation $h$ of $\F_3^n \setminus \{0\}$ fixing at least 
$$
3^n -1 - 3^{n-1}
$$
points. 
Now we invoke~\cite[Corollary~1]{GurMag}, which ensures that if a  primitive group $H$ of degree $d$ contains a non-trivial element fixing more than $\frac 4 7 d$ points, then its general Fitting subgroup $F^*(H)$ is a product of alternating groups. The proportion of fixed points of $h$ is 
$$
1- \frac{3^{n-1}}{3^n-1}=1-\frac{1}{3}\left(1+ \frac{1}{3^n-1}\right) = \frac{2}{3} - \frac{1}{3^{n+1}-3} > \frac{4}{7}
$$
since $n \geq 3$, so that  $F^*(H)$ is a product of alternating groups. By Burnside's theorem (see~\cite[Theorem~4.3]{Cameron}), a minimal normal subgroup of a $2$-transitive group is either elementary abelian (with a regular action) or non-abelian simple (with a primitive action). We infer that the the socle of $H$ is an alternating group. The conclusion now follows from the classification of the finite $2$-transitive groups, see \cite[\S7.4]{Cameron}. 
\end{proof}

\begin{remark}\label{rem:CFSG-free}
As mentioned in the introduction, for $p$ sufficiently large, the use of the CFSG via the classification of $4$-transitive groups in the proof of Corollary~\ref{cor:Alt-base-field} can be bypassed, using the main result of~\cite{Babai} or of~\cite{Pyber}, since the $G$-action on the set  $\F_p^n \setminus \{0\}$ is $(p-1)$-transitive by Theorem~\ref{thm:k-transitivity-base-field}. 
\end{remark}

\begin{coro}\label{cor:Z[1/30]}
For each $n \geq 3$, the group $G_{n,\Z[1/30]}$
defined in Theorem~\ref{thm:T-bis} has property~(T) and maps onto the alternating groups $\Alt(p^n-1)$ for all primes $p \geq 7$. 
The same holds for the group $G_{n,\Z[1/6]}$ for all $n \geq 4$ and all primes $p \geq 5$.
\end{coro}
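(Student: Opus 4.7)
The plan is to establish the two assertions (property (T), and surjection onto $\Alt(p^n-1)$) separately. Property (T) will follow immediately from Theorem~\ref{thm:T-bis}: both $\Z[1/30]$ and $\Z[1/6]$ are finitely generated rings in which $30$ (respectively $6$) is invertible, so the theorem provides property~(T) in each of the two cases. The main work will go into producing the surjections onto $\Alt(p^n-1)$.

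For this, I will fix an odd prime $p$ in the stated range and construct a homomorphism
$$\pi_p \colon G_{n,\Z[1/30]} \to \Aut(\F_p[x_1,\dots,x_n])$$
by reduction modulo $p$. Since $30$ (resp.\ $6$) is invertible in $\F_p$ by assumption, we have a ring map $\Z[1/30] \to \F_p$; any $\Z[1/30]$-algebra automorphism of $\Z[1/30][x_1,\dots,x_n]$ descends, via base change to $\F_p$, to an $\F_p$-algebra automorphism of $\F_p[x_1,\dots,x_n]$, and this assignment is functorial in the group operation. By construction the image of $\pi_p$ contains $\EL_n(\F_p)$ together with the polynomial transvection $\trans 1221$.

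Next, I will show that the image of $\pi_p$ contains the group $G_{\F_p, n; 1,1,\dots,1,2}$. Its generators are $\tau_i^{(1)} = \trans i{i+1}11$ for $i < n$ (already lying in $\EL_n(\F_p)$) together with $\tau_n^{(2)} = \trans n121$. To realise the last generator, I conjugate $\trans 1221$ by the linear automorphism of $\F_p[x_1,\dots,x_n]$ induced by the 3-cycle of variables $(x_1, x_n, x_2) \mapsto (x_n, x_2, x_1)$. Since a 3-cycle is an even permutation, the corresponding matrix lies in $\SL_n(\F_p) = \EL_n(\F_p)$, and tracking each variable through the conjugation confirms that the result is indeed $\trans n121$. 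Therefore $G_{\F_p,n;1,\dots,1,2}$ sits inside the image of $\pi_p$.

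Once this inclusion is in place, I will invoke Corollary~\ref{cor:Alt-base-field} applied with exponents $e_1 = \cdots = e_{n-1} = 1$ and $e_n = 2$, so that $E = 2$ and the hypotheses $p > E$ and $\gcd(p-1, E-1) = 1$ hold automatically for every odd prime. The corollary then gives that $G_{\F_p, n; 1,\dots,1,2}$ surjects onto $\Alt(p^n-1)$ via its permutation action on $\F_p^n \setminus \{0\}$; composing with $\pi_p$ yields the desired surjection $G_{n,\Z[1/30]} \onto \Alt(p^n-1)$. The argument for $G_{n,\Z[1/6]}$ with $n \geq 4$ and $p \geq 5$ is identical. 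There is no serious obstacle: the only explicit calculation is the 3-cycle conjugation realising $\tau_n^{(2)}$ from $\trans 1221$, and everything else is functoriality of base change combined with the two previously established results.
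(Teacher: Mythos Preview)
Your proposal is correct and follows essentially the same route as the paper: reduce along the ring map $R \to \F_p$, observe that the image of $G_{n,R}$ contains $G_{\F_p,n;1,\dots,1,2}$, and then invoke Corollary~\ref{cor:Alt-base-field}. The paper states the containment $G_{R,n;1,\dots,1,2} \leq G_{n,R}$ over $R$ rather than over $\F_p$, but this amounts to the same conjugation argument you spell out; the only point you should tighten is the final sentence---rather than ``composing with $\pi_p$'', you should note that the image of $G_{n,R}$ in $\Sym(\F_p^n\setminus\{0\})$ both contains $\Alt(p^n-1)$ (by the inclusion) and is contained in it (since the generators reduce to elements of order $p$), hence equals $\Alt(p^n-1)$.
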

\begin{proof}
The group $G_{n,R}$ contains $G_{R,(1,\dots, 1,2)}$ as a subgroup. Moreover, a surjective ring homomorphism  $R\to \F_p$ yields an action of   $G_{n,R}$ on the set $\F_p^n\setminus\{0\}$. Since $G_{R,(1,\dots, 1,2)}$ acts through the quotient map $G_{R,(1,\dots, 1,2)} \to G_{\F_p,(1,\dots,1,2)}$, the image of that action is the full alternating group by the previous corollary.
\end{proof}

\section{Alternating groups as expanders}

This section is devoted to proving Theorem~\ref{thm:expanders-intro}. We present it here as it only relies on the results obtained thus far, and does not require the technicalities we will deal with when extending Theorem~\ref{thm:k-transitivity-base-field} over extensions of the base field in the following section. 

For the reader's convenience, we reproduce the statement of the theorem here:

\begin{thm}\label{thm:expanders-core}
Let $p$  be an odd prime prime. 

\begin{enumerate}[label=(\roman*)]
    \item The permutations 
$$
\sigma(x,y,z) = (y,z,x)
\quad
\alpha(x,y,z) = (x+y,y,z)
\quad
\beta(x,y,z) = (x+y^2,y,z),
$$
acting on the set $\F_p^3 \setminus \{(0, 0, 0)\}$ of cardinality $p^3-1$, generate the full alternating group $\Alt(p^3-1)$. The associated Cayley graphs form expanders of degree~$6$. 

\item The permutations 
$$
\rho(x_1,x_2, x_3, x_4, x_5, x_6, x_7) = (x_2, x_3, x_4, x_5, x_6, x_7,x_1)
$$
$$
\gamma(x_1,x_2, x_3, x_4, x_5, x_6, x_7) = (x_1 + x_2 ,x_2, x_3, x_4+x_6^2, x_5, x_6, x_7)
$$
acting on the set $\F_p^7 \setminus \{(0, \dots, 0)\}$ of cardinality $p^7-1$, generate the full alternating group $\Alt(p^7-1)$. The associated Cayley graphs form expanders of degree~$4$. 
\end{enumerate}
\end{thm}

\begin{proof}
We start with (i). Let $H$ be the permutation group generated by $\sigma, \alpha, \beta$, which have order $3, p, p$ respectively. In particular   $\sigma, \alpha, \beta$ are even permutations.

Observe that the permutations  
$$\beta, \sigma\alpha \sigma^{-1},\sigma^{-1}\alpha\sigma \in H$$
coincide with  the  images of the generators $\tau_1(1), \tau_2(1)$ and $\tau_3(1)$ of the group $G_{\F_p, (2, 1, 1)}$ in its natural action on $\F_p^3 \setminus \{(0, 0, 0)\}$. Therefore, it follows from  Corollary~\ref{cor:Alt-base-field} that $H$ is the full   alternating group $\Alt(p^3-1)$.


Set $Q =  \{\sigma, \alpha, \beta\}$. To prove the second assertion, we need to show that the unitary representation $\pi$, defined as the subrepresentation of the regular representation of $\Alt(p^3-1)$ on the orthogonal of the constant functions, does not have $(Q, \varepsilon)$-invariant vectors, where $\varepsilon >0$  is smaller than some constant which is independent of $p$. Without loss of generality, we may assume that $p \geq 11$. 

We now assume that $\pi$ has a $(Q, \varepsilon)$-invariant unit vector $v$, for some $\varepsilon >0$. 
Observe that the subgroup $L$ of $H$ generated by $\alpha$, $\sigma\alpha \sigma^{-1}$ and $\sigma^{-1}\alpha\sigma$ is a copy of $\SL_3(\F_p)$. Hence we may view the restriction $\pi |_L$ as a representation of the group $\SL_3(\Z)$, which factors thorugh $\SL_3(\F_p)$. Using property (T) for the group $\SL_3(\Z)$ we deduce from Lemma~\ref{lem:almost-invariant} that $v$ is $(\SL(\F_p), D\varepsilon)$-invariant vector for some constant $D$ which depends only on the Kazhdan constant of $\SL_3(\Z)$. In particular $D$ is independent of $p$. Using that $\trans 322r = \left[\trans 311r, \trans 1221\right] $, it follows that the vector  $v$ is $(S, 3D\varepsilon)$-invariant, where $S$ is the generating set for  $G_{\F_p,( 2, 1, 1)}$ consisting of 
all those elements $\trans{11}{32}{2}{r},\trans{22}{13}{1}{s}$ and $\trans{3}{21}{21}{t}$, with $r, s, t \in \mathbf F_p$. We now invoke  
Lemma~\ref{lem:almost-invariant} and Theorem~\ref{thm:T-for-G}. Those ensure   that the unit vector $v$ is also a $(G_{\F_p, ( 2, 1, 1)}, C_p\varepsilon)$-invariant vector, where $C_p = \frac {6D} {\kappa_p} $ and $\kappa_p$ is the Kazhdan constant for $G_{\F_p, ( 2, 1, 1)}$ with respect to the generating set $S$. Notice that  Theorem~\ref{thm:T-for-G} provides a lower bound for $\kappa_p$ which is a strictly increasing function of $p$. In particular it is bounded below by the value of that function at $p=11$.  Therefore, we deduce that $v$ is a $(G_{\F_p, ( 2, 1, 1)}, C\varepsilon)$-invariant vector, where $C $ is now   independent of $p$.  In the case where  $C\varepsilon < 1$, we deduce that $\pi$ contains a non-zero vector that it invariant under the whole group $G_{\F_p,( 2, 1, 1)}$. This is impossible since the latter group maps onto $\Alt(p^3-1)$ by Corollary~\ref{cor:Alt-base-field}. 
Thus we have reached contradiction by choosing $\varepsilon<1/C$. This finishes the proof of (i).

\medskip


The proof of part (ii) is similar. We notice that the commutator $\tau=[\gamma,\rho \gamma \rho^{-1}]$
acts as
$$
\tau(x_1,x_2, x_3, x_4, x_5, x_6, x_7) = (x_1+x_3,x_2, x_3, x_4, x_5, x_6, x_7)
$$
thus $\tau$ and $\rho$ generate a copy of the group $\SL_7(\F_p)$. This implies that there is a  word $w$ 
in $\rho$ and $\gamma$ which acts $\F_p^7 \setminus \{0\}$ as
$$
w(x_1,x_2, x_3, x_4, x_5, x_6, x_7) = (x_1,x_2, x_3, x_4+x_6^2, x_5, x_6, x_7),
$$
such that the word $w$ is \textit{short} in the sense that its  length  is bounded above independently of $p$. Indeed,  one may  take $w = \gamma \tau^{-g}$, where $g$ acts on the coordinates as the permutation $(12)(34)$. Such an element $g$  exists and can be expressed as a short   word in  $\rho$ and $\gamma$  since $\SL_7(\Z)$ contains all even permutations of the variables.
Furthermore by conjugating $w$ by a suitable permutation of the variables, there exits another short word which acts as $\trans{1}{2}{2}{1}$.  Therefore the image of $\langle \rho, \gamma\rangle$ in $\Sym(p^7-1)$ contains the image of  $G_{\F_p,( 2, 1, \dots,  1)}$. In particular its image is the full alternating group $\Alt(p^7-1)$ by Corollary~\ref{cor:Alt-base-field} (since $\rho$ is of order $7$ and $\gamma$ is of order $p$, both elements are even permutations)

To finish the proof, we repeat  the argument from part (i) and see that the Cayley graphs of $\Alt(p^7-1)$ with the generating set $\{\rho, \tau, w\}$ are expanders. Since these elements can be expressed as short words in $\gamma$ and $\rho$, we infer  that the Cayley graphs of $\Alt(p^7-1)$ with respect to 
$\{ \gamma, \rho\}$ are also expanders.
\end{proof}
\begin{remark}
It is possible to track the constants in the argument above and obtain bounds for the spectral gap of the resulting expander graphs. A quick computation bounds the gap by $10^{-3}$ when $p$ is sufficiently large.
\end{remark}

\section{Constructing larger finite quotients}

Our next goal is to establish a suitable generalization of Theorem~\ref{thm:k-transitivity-base-field}  over larger fields $A = \F_q$. In that case, the situation is more complicated  in three different ways:
\begin{enumerate}[label=(\alph*)]
    \item  The action on non-zero vectors is not transitive;  there are several orbits, coming from intermediate fields  between $\F_p$ and $\F_q$.
    
    \item  The action of the Frobenius automorphism commutes with the $G$-action, so we cannot hope to have $k$-transitivity. We must consider the $G$-action on the quotient set modulo the Frobenius action.
    
    \item  The existence of  non-trival $E-1$ roots of unity  in the field causes significant technical complications.
\end{enumerate}

The rest of this section aims at addressing all those issues.

\subsection{Orbit invariants}\label{sec:orbits}

As before, we let $n \geq 3$, $G = G_{\F_p, \mathbf{e}}$ and $E=e_1\dots e_n$, and consider an $\F_p$-algebra $A$. 
Since $G$ preserves the grading of $R_n = \F_p[x_1,\dots,x_n]$ introduced in Definition~\ref{grading}, the   homogeneous components of the grading allow us to define orbit-invariants for the $G$-action on $A^n$. More precisely,   for each  $\phi \in \Hom(R_n, A)$  we  define 
the following subsets of $A$: $A_{\phi,s} = \phi(R_n^{(s)})$ and 
$A_{\phi} = \phi(R_n)$. For each $s$, the fibers of the map $\phi \mapsto A_{\phi,s}$ are $G$-invariant, since $A_{g\phi,s}  = A_{\phi,s}$ for all $g \in G$. Notice that  $A_{\phi,0}$  and $A_{\phi}$ are  subrings of $A$. Moreover, for each $s$ the subset $A_{\phi,s}$ is a module over $A_{\phi,0}$, and we have
$$
A_{\phi} = A_{\phi,0} + A_{\phi,1}  + \dots + A_{\phi,E-2}.$$
That sum is however not  direct in general.

\begin{lem}
\label{orbit:invariants}
Let $A$ be an algebraic field extension of $\F_p$,   and assume that $\phi$ is not the zero homomorphism.\footnote{We need to exclude the case when $\phi$ sends all generators to $0$, because in this case the sets $A_{\phi,i}$ consist only of $0$ and are not subfields, for $i \not =0$.} 
Then 
$A_{\phi,0}$ and $A_\phi$ are finite subfields of $A$. For each $s=0, 1, \dots, E-2$, the module  $A_{\phi,s }$ is a one-dimensional vector space over $A_{\phi,0}$.  

Furthermore, for any $\alpha_1 \in A_{\phi,1 } \setminus \{0\}$, we have that that $(\alpha_1)^s$ generates $A_{\phi,s }$ as a vector space over $A_{\phi,0}$, for all $s=1,\dots, E-2$.
In particular, we have  $[A_{\phi}:A_{\phi,0}]\leq E-1$.
\end{lem}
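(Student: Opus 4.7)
The plan is to exploit two elementary facts about the setup: any finitely generated $\F_p$-subalgebra of the algebraic extension $A/\F_p$ is a finite field, and every generator $x_i$ has degree $d_i = e_ie_{i+1}\cdots e_n$ coprime to $E-1$, because $d_i \mid E$ forces $E - 1 \equiv -1 \pmod{d_i}$. The subfield claims reduce to the first fact: $A_\phi = \F_p[\phi(x_1),\dots,\phi(x_n)]$ is finitely generated inside an algebraic extension, hence a finite integral domain, hence a finite field; and $A_{\phi,0}$, being a subring of $A_\phi$, is a finite subfield as well.

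The next step is to show $A_{\phi,s} \neq 0$ for every $s \in \Z/(E-1)\Z$. Since $\phi$ is non-zero, some $\phi(x_{i_0})$ is non-zero; by the coprimality just noted, $d_{i_0}$ generates $\Z/(E-1)\Z$, so as $k$ ranges over the integers the non-zero elements $\phi(x_{i_0})^k \in A_{\phi,\, k d_{i_0} \bmod (E-1)}$ populate every graded piece. With that in hand, the one-dimensionality of $A_{\phi,s}$ over $A_{\phi,0}$ will follow by a multiplication-by-a-fixed-element trick: pick any non-zero $\beta \in A_{\phi,-s}$ and consider $\mu_\beta \colon A_{\phi,s} \to A_{\phi,0}$, $a \mapsto a\beta$. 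This map is well-defined because $A_{\phi,s} \cdot A_{\phi,-s} \subseteq A_{\phi,0}$, it is $A_{\phi,0}$-linear, and it is injective because $A_\phi$ is a domain and $\beta\ne 0$. Since $A_{\phi,0}$ is a field, its only $A_{\phi,0}$-submodules are $\{0\}$ and $A_{\phi,0}$, so the non-zero injective image fills $A_{\phi,0}$ and $\mu_\beta$ is an isomorphism.

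The remaining assertions are then immediate. Take any non-zero $\alpha_1 \in A_{\phi,1}$; then $\alpha_1^s$ is a non-zero element of the $1$-dimensional space $A_{\phi,s}$ (product of non-zero elements in a field), hence generates it, for every $s = 1, \dots, E-2$. The bound $[A_\phi : A_{\phi,0}] \leq E-1$ follows from the surjection $\sum_{s=0}^{E-2} A_{\phi,s} \onto A_\phi$ combined with the $1$-dimensionality of each summand. The main subtlety of the setup --- that the sum $\sum_s A_{\phi,s}$ need not be direct --- never has to be confronted: we argue piecewise through the $A_{\phi,s}$ via the embedding $\mu_\beta$, and the final dimension bound only uses the surjection.
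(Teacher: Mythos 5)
Your proof is correct and follows the same core strategy as the paper: the finite subring/subfield observation, and the multiplication-by-a-fixed-homogeneous-element trick to get an $A_{\phi,0}$-linear embedding $A_{\phi,s}\hookrightarrow A_{\phi,0}$. The one place you go beyond the paper's (terse) argument is that you explicitly justify why every graded piece $A_{\phi,s}$ is non-zero --- via the neat observation that $d_{i_0}\mid E$ implies $\gcd(d_{i_0},E-1)=1$, so $d_{i_0}$ generates $\Z/(E-1)\Z$ and the powers of a single non-zero $\phi(x_{i_0})$ populate all degrees --- which is exactly what is needed both to pick the multiplier $\beta\in A_{\phi,-s}$ and to upgrade "at most one-dimensional" to "exactly one-dimensional"; the paper simply takes this for granted.
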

\begin{proof}
Both $A_{\phi,0}$ and $A_\phi$ are finitely generated as rings. Any non-zero finitely generated subring of $A$ is thus a finite field. The first claim follows.  Since $A_{\phi,s }$ is a module over $A_{\phi,0}$ it is a vector space. That vector space is $1$-dimensional since for any nonzero $\alpha \in A_{\phi, E-1-s}$, we have the inclusion $\alpha A_{\phi,s } \subset A_{\phi,0 }$,  and the multiplication by $\alpha$ is injective since $A_\phi$ is a field. Given any $\alpha_1 \in A_{\phi,1 } \setminus \{0\}$ and $s \in \{1, \dots, E-2$, we have  $\alpha_1^s \neq 0$, hence  $\alpha_1^s$ generates  the $1$-dimensional space  $A_{\phi,s }$   over $A_{\phi,0 }$.
\end{proof}

\begin{coro}
If $E-1$ is a prime number different from $p$,  then one of the following assertions holds.
\begin{enumerate}[label=(\alph*)]
    \item  $A_{\phi}=A_{\phi,s}$ for all $s$.
    \item  $A_{\phi,0}$ does contain all $(E-1)^\text{st}$ roots of $1$;  $A_{\phi}$ is an extension of $A_{\phi,0}$ of degree $E-1$ obtained by adding a $(E-1)^\text{st}$ root of some element in $A_{\phi,0}$.
    \item  $A_{\phi,0}$ does not contain any non-trivial $(E-1)^\text{st}$ root of $1$, and $A_{\phi}$ is obtained from $A_{\phi,0}$ by adjoining the $(E-1)^\text{st}$ roots of $1$. 
\end{enumerate}
\end{coro}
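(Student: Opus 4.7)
The plan is to reduce everything to the structure of $A_\phi$ as a simple extension of $A_{\phi,0}$ generated by the element $\alpha := \alpha_1 \in A_{\phi,1}$ provided by Lemma~\ref{orbit:invariants}. Since $A_{\phi,s} = A_{\phi,0}\cdot \alpha^s$ for every $s \in \{0,1,\dots,E-2\}$, we have $A_\phi = A_{\phi,0}(\alpha)$. Moreover, since the grading is cyclic of order $\ell := E-1$ and $\alpha \in \phi(R_n^{(1)})$, the element $c := \alpha^\ell$ lies in $\phi(R_n^{(\ell)}) = \phi(R_n^{(0)}) = A_{\phi,0}$. Hence the whole question reduces to analyzing the minimal polynomial of $\alpha$ over the finite field $A_{\phi,0}$, which must divide $x^\ell - c$.

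The first branch is immediate: if $\alpha \in A_{\phi,0}$, then each $A_{\phi,s} = A_{\phi,0}\alpha^s = A_{\phi,0}$ coincides with $A_\phi$, which is case~(a). So assume henceforth $\alpha \notin A_{\phi,0}$, and split according to whether $A_{\phi,0}$ contains a primitive $\ell$-th root of unity.

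If it does, the $\ell$ roots of $x^\ell - c$ in $\overline{A_{\phi,0}}$ (all distinct since $\ell \neq p$) have the form $\zeta^i \alpha$ with $\zeta \in A_{\phi,0}$. I will invoke the classical Kummer criterion for prime exponent: over a field containing the $\ell$-th roots of unity, the polynomial $x^\ell - c$ is either totally split or irreducible. Since $\alpha \notin A_{\phi,0}$ it cannot split, so it is irreducible; this gives $[A_\phi : A_{\phi,0}] = \ell$ and places us in case~(b), with $c$ as the prescribed element of $A_{\phi,0}$.

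If instead $A_{\phi,0}$ contains no primitive $\ell$-th root of $1$, then $\ell \nmid |A_{\phi,0}^*|$, so the $\ell$-th power map is a bijection on $A_{\phi,0}^*$; in particular $c$ admits a unique $\ell$-th root $\beta \in A_{\phi,0}$. Then $\alpha/\beta \in A_\phi$ is an $\ell$-th root of unity; it is nontrivial (otherwise $\alpha = \beta \in A_{\phi,0}$), and since $\ell$ is prime it must be primitive. Consequently $A_\phi = A_{\phi,0}(\alpha) = A_{\phi,0}(\alpha/\beta)$ is obtained from $A_{\phi,0}$ by adjoining a primitive $\ell$-th root of unity, which is case~(c). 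The only delicate ingredient is the Kummer-type dichotomy used in case~(b); over finite fields it is completely standard and should cause no trouble.
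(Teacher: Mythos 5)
Your proof is correct. The paper states this corollary without proof, treating it as an immediate consequence of Lemma~\ref{orbit:invariants}, so there is no ``paper's proof'' to compare against; your argument supplies the missing details in the natural way. You correctly identify that everything reduces to $A_\phi = A_{\phi,0}(\alpha)$ with $c := \alpha^{E-1} \in A_{\phi,0}$, and the trichotomy then follows from the structure of $x^{E-1}-c$ over the finite field $A_{\phi,0}$: case~(a) when $\alpha \in A_{\phi,0}$; case~(b) via the Kummer dichotomy for prime exponent when $A_{\phi,0}$ contains the $(E-1)$-st roots of unity (and $c$ is not an $(E-1)$-st power there, since otherwise $\alpha$ would lie in $A_{\phi,0}$); and case~(c) when the $(E-1)$-st power map is a bijection on $A_{\phi,0}^*$, forcing $\alpha/\beta$ to be a nontrivial, hence primitive, root of unity. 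All the small verifications hold up ($\alpha \neq 0$ by Lemma~\ref{orbit:invariants}, the roots of $x^{E-1}-c$ are distinct since $E-1 \neq p$, and the degree in case~(b) equals $E-1$, matching the upper bound $[A_\phi : A_{\phi,0}] \leq E-1$ from the lemma).
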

\begin{proof}
Fix $\alpha_1 \in A_{\phi,1 }$ and let $k$ be the smallest integer such that $\alpha_1^k \in A_{\phi,0 } $. Clearly $k$ divides $E-1$, thus either $k=1$ or $k=E-1$ since $E-1$ is prime. 

The case $k=1$ give $A_{\phi,s } = A_{\phi,0 }$, and  we are in (a). 

The case $k=E-1$ splits in two subcases.

Assume first that  $\alpha_1^{E-1} \not\in A_{\phi,0 }^{E-1}$. Then clearly $A_{\phi,0} \neq A_{\phi,0 }^{E-1}$. Therefore, the map $A_{\phi,0} \to A_{\phi,0 }^{E-1} : x \mapsto x^{E-1}$ is not surjective, hence it is not injective. Since its restriction to the non-zero elements is a group homomorphism, it follows that  $A_{\phi,0}$ contains a non-trivial  $(E-1)^\text{st}$ root of $1$, hence it contains all of them since $E-1$ is prime by hypothesis. This shows that (b) holds.  

It remains to treat the subcase where  $\alpha_1^{E-1} = \beta^{E-1}$ for some $\beta \in  A_{\phi,0 }$. It then follows that $\alpha'_1 = \alpha_1 \beta^{-1}$ is a $(E-1)^\text{st}$ root of $1$ contained in $A_{\phi,1 }$. Since $E-1$ is a prime, the subfield $A_{\phi,0}$ contains all $(E-1)^\text{st}$ roots of $1$ as soon as it contains any of them. Since $\alpha'_1  \not \in A_{\phi,0}$ in the case at hand, we deduce that    (c) holds. 
\end{proof}

Recall that the degree of $x_i$ with respect to the grading from Definition~\ref{grading} equals $ e_i\dots e_n$. 
The following easy observation will be useful. 

\begin{lem}  
\label{lem:A-phi-0}
Let $A$ be an algebraic field extension of $\F_p$,   and let $\phi \in \Hom(R_n, A)$ with $\phi(x_1) \neq 0$, where $R_n = \F_p[x_1,\dots,x_n]$. 
Then we have
$$
A_{\phi, 0} = \F_p\big(\phi(x_1)^{E-1}, \phi(x_1)^{-d_2}\phi(x_2),
\dots, 
\phi(x_1)^{-d_i}\phi(x_i),
\dots
\big),
$$
where  $d_i = e_i\dots e_n$ for all $i$.
\end{lem}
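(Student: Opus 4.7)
Set $d_i = e_i e_{i+1} \cdots e_n$ for $i = 1, \dots, n$, so that $d_1 = E$ and the grading from Definition~\ref{grading} assigns to $x_i$ the residue $d_i \bmod (E-1)$; note in particular that $d_1 = E \equiv 1 \pmod{E-1}$. The formula only makes sense when $\phi(x_1) \neq 0$, so I will assume this throughout (the hypothesis $\phi(1) \neq 0$ is used to invoke Lemma~\ref{orbit:invariants}, which guarantees that $A_{\phi, 0}$ is a field). Write
$$
y_1 = \phi(x_1)^{E-1}, \qquad y_i = \phi(x_1)^{-d_i} \phi(x_i) \text{ for } i \geq 2.
$$
The goal is to prove $A_{\phi,0} = \F_p(y_1, \dots, y_n)$, and I would do this by checking the two inclusions separately.

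For the inclusion $\F_p(y_1, \dots, y_n) \subseteq A_{\phi, 0}$: the element $y_1 = \phi(x_1^{E-1})$ lies in $A_{\phi,0}$ since $x_1^{E-1}$ has degree $E-1 \equiv 0$. For $i \geq 2$, choose the smallest integer $k \geq 0$ with $d_i + k \equiv 0 \pmod{E-1}$; then the monomial $x_i x_1^k$ lies in $R_n^{(0)}$, so $\phi(x_i)\phi(x_1)^k \in A_{\phi,0}$. Since $A_{\phi,0}$ is a field by Lemma~\ref{orbit:invariants} containing the nonzero element $y_1$, it also contains $y_1^{-(k+d_i)/(E-1)} = \phi(x_1)^{-(k+d_i)}$, whence $y_i = \bigl(\phi(x_i)\phi(x_1)^k\bigr) \cdot \phi(x_1)^{-(k+d_i)} \in A_{\phi,0}$.

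For the reverse inclusion $A_{\phi,0} \subseteq \F_p[y_1, \dots, y_n]$: any element of $A_{\phi,0}$ is an $\F_p$-linear combination of products $\phi(x_1)^{a_1}\phi(x_2)^{a_2} \cdots \phi(x_n)^{a_n}$ with $\sum_i a_i d_i \equiv 0 \pmod{E-1}$. Substituting $\phi(x_i) = y_i \phi(x_1)^{d_i}$ for $i \geq 2$ rewrites such a product as $\phi(x_1)^N \prod_{i \geq 2} y_i^{a_i}$, where $N = a_1 + \sum_{i \geq 2} a_i d_i$. The key (and only) arithmetic observation is that $N$ differs from the degree $\sum_i a_i d_i$ by $-a_1(E-1)$ because $d_1 = E$, so $N$ is congruent to the degree modulo $E-1$ and is therefore a non-negative multiple of $E-1$. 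Consequently $\phi(x_1)^N$ is a non-negative power of $y_1$, and the whole product lies in $\F_p[y_1, \dots, y_n]$.

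Combining the two inclusions shows $A_{\phi,0} = \F_p[y_1, \dots, y_n]$, which equals $\F_p(y_1, \dots, y_n)$ because $A_{\phi,0}$ is a finite field. There is no real obstacle beyond the bookkeeping on exponents; the crucial detail is simply to notice that the congruence $\sum a_i d_i \equiv 0 \pmod{E-1}$ translates, via $d_1 \equiv 1$, into the divisibility $(E-1) \mid N$ needed to express $\phi(x_1)^N$ as a power of $y_1$.
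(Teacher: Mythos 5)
Your proof is correct and follows essentially the same route as the paper: introduce $y_1=\phi(x_1)^{E-1}$, $y_i=\phi(x_1)^{-d_i}\phi(x_i)$, and compare $\F_p(y_1,\dots,y_n)$ with $A_{\phi,0}$ by manipulating monomials using the key congruence $d_1=E\equiv 1\pmod{E-1}$. The only difference is that the paper asserts the containment $\F_p(y_1,\dots,y_n)\subseteq A_{\phi,0}$ without comment and only spells out the reverse inclusion, whereas you also justify the forward inclusion via the auxiliary monomial $x_ix_1^k\in R_n^{(0)}$ and the fact that $A_{\phi,0}$ is a field; both parts are correct.
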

\begin{proof}
Let 
$$
F = \F_p\big(\phi(x_1)^{E-1}, \phi(x_1)^{-d_2}\phi(x_2), \dots, \phi(x_1)^{-d_n}\phi(x_n)\big) \subseteq A_{\phi, 0}.
$$
By definition, the field $A_{\phi, 0}$ is the $\F_p$-algebra generated by the set 
$$
\left\{
\phi(x_{i_1})^{s_1}\dots \phi(x_{i_k})^{s_k} \bigg|\, s_j \in \mathbf Z, \ \sum_{j=1}^k d_{i_j} s_j \in (E-1)\mathbf Z
\right\}.
$$
Fix such a generator $\phi(x_{i_1})^{s_1}\dots \phi(x_{i_k})^{s_k}$ of $A_{\phi, 0}$. 
The field $F$ contains the element $(\phi(x_1)^{-d_{i_j}}\phi(x_{i_j}))^{s_j}$ for all $j$, hence also the product of all those elements, namely 
$$\phi(x_1)^{-\sum_{j=1}^k d_{i_j} s_j} \phi(x_{i_1})^{s_1}\dots \phi(x_{i_k})^{s_k}.$$ 
Since $\sum_{j=1}^k d_{i_j} s_j$ is a multiple of $E-1$ and since $F$ contains $\phi(x_1)^{E-1}$, we deduce that $F$ contains $\phi(x_{i_1})^{s_1}\dots \phi(x_{i_k})^{s_k}$. The result follows. 
\end{proof}

Given $\alpha \in A$, we define $\phi_\alpha \in \Hom(\F_p[x_1, \dots, x_n], A)$ by the assignments 
$$\phi_\alpha \colon \left\{
\begin{array}{rcl}
x_1 & \mapsto & \alpha\\
x_\ell & \mapsto & 0 \qquad \text{for all } \ell \geq 2.
\end{array}
\right.
$$
The next result shows that the maps $\phi \mapsto A_{\phi,0}$ and $\phi \mapsto A_{\phi,1}$ form a complete set of orbit invariants when $p \geq  E$. 

\begin{thm}
\label{thm:transitivity}
Let $A$ be an algebraic field extension of $\F_p$,   and let $\phi \in \Hom(R_n, A)$ be non-zero. Assume that $p \geq E$. 
\begin{enumerate}[label=(\roman*)]
    \item  There exists $\alpha \in A_{\phi,1}$ such that $\alpha^{E-1}$ generates the field $A_{\phi,0}$ over $\F_p$.
    
    \item For any $\alpha \in A_{\phi,1}$ such that $\alpha^{E-1}$ generates the field $A_{\phi,0}$ over $\F_p$, the homomorphism $\phi_\alpha$  
    belongs to the $G$-orbit of $\phi$.
\end{enumerate}
In particular, two non-zero points $\phi, \psi \in \Hom(R_n, A) \cong A^n$ are in the same $G$-orbit if and only if $(A_{\phi, 0}, A_{\phi, 1}) = (A_{\psi, 0}, A_{\psi, 1})$.
\end{thm}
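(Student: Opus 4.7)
The plan is to prove (i) and (ii) in turn; the ``in particular'' clause drops out immediately, since the forward direction is the $G$-invariance of the subfields $A_{\phi,s}$ (noted just before the theorem) while the converse is obtained by applying (ii) to a common $\alpha$ provided by (i).

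For (i), Lemma~\ref{orbit:invariants} supplies $\alpha_1 \in A_{\phi,1}$ with $\alpha_1^{E-1} \in A_{\phi,0} \setminus \{0\}$, although this element need not generate $A_{\phi,0}$ over $\F_p$. Since $A_{\phi,1} = A_{\phi,0}\alpha_1$, every candidate $\alpha \in A_{\phi,1}$ is of the form $\lambda\alpha_1$ with $\lambda \in A_{\phi,0}$. Applying Lemma~\ref{lem:count} with $\F_q = A_{\phi,0}$, $N = E-1$, and $\gamma = \alpha_1^{E-1}$ (valid since $p \geq E$) exhibits a positive density of $\lambda$ for which $\F_p(\lambda^{E-1}\alpha_1^{E-1})$ equals $A_{\phi,0}$, and any such $\lambda$ produces the desired~$\alpha$.

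For (ii), I plan to construct $g \in G$ sending $\phi$ to $\phi_\alpha$ in four stages, using throughout the fact that by Corollary~\ref{cor:PolynomialTransvection} the polynomial transvection $\beta_{j;k}^P \in G$ modifies $\phi(x_j)$ by an arbitrary element of $\phi(x_k)^{t_{j,k}} \cdot \F_p(\phi(x_k)^{E-1})$ while fixing the other coordinates. \emph{Stage A:} apply $\trans{1}{j}{t_{1,j}}{1} \in G$ (available by Theorem~\ref{thm:transvections}), for some $j$ with $\phi(x_j) \neq 0$, to make $\phi(x_1) \neq 0$. \emph{Stage B:} iteratively enlarge $F := \F_p(\phi(x_1)^{E-1})$ until $F = A_{\phi,0}$, alternating two moves depending on the current values of $\F_p(\phi(x_i)^{E-1})$ for $i \neq 1$: use Lemma~\ref{claim:enlarge}(ii) with $(\alpha,\beta) = (\phi(x_i),\phi(x_1))$ to grow $F$ whenever some $\F_p(\phi(x_i)^{E-1}) \geq F$, and use the lemma with $(\alpha,\beta) = (\phi(x_1),\phi(x_i))$ to first lift some $\F_p(\phi(x_i)^{E-1})$ above $F$ otherwise. \emph{Stage C:} first populate some $\phi(x_j)$, $j \neq 1$, with $\F_p(\phi(x_j)^{E-1}) = A_{\phi,0}$ (possible since the modification range of $\beta_{j;1}^P$ now spans $A_{\phi,d_j}$ and Lemma~\ref{lem:count} supplies generating elements therein); then apply $\beta_{1;j}^P$ to solve $\phi(x_j)^{t_{1,j}} P(\phi(x_j)^{E-1}) = \phi(x_1) - \alpha$, which is possible since $A_{\phi,1} = A_{\phi,0}\cdot\phi(x_j)^{t_{1,j}} = \F_p(\phi(x_j)^{E-1})\cdot\phi(x_j)^{t_{1,j}}$. \emph{Stage D:} with $\phi(x_1) = \alpha$, kill each $\phi(x_j)$ for $j > 1$ using $\beta_{j;1}^{P_j}$ with $P_j(\alpha^{E-1}) = \phi(x_j)/\alpha^{t_{j,1}} \in A_{\phi,0} = \F_p(\alpha^{E-1})$.

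The main obstacle will be the termination of Stage B with $F = A_{\phi,0}$. If instead the iteration halted at some $F \subsetneq A_{\phi,0}$, the failure of Lemma~\ref{claim:enlarge}(ii) in the ``lifting'' direction would force both $\phi(x_i)^{E-1} \in F$ and $\phi(x_1)^{t_{i,1}}\phi(x_i)^{E-2} \in F$ for every~$i$. Dividing the second by the first yields $\phi(x_1)^{t_{i,1}}/\phi(x_i) \in F$, hence $\phi(x_i)/\phi(x_1)^{t_{i,1}} \in F$; combined with $\phi(x_1)^{E-1} \in F$ and the explicit description of $A_{\phi,0}$ from Lemma~\ref{lem:A-phi-0}, this gives $A_{\phi,0} \subseteq F$, a contradiction. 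A secondary technicality is that Lemma~\ref{claim:enlarge} requires its exponent to be strictly smaller than $E-1$, which fails in edge cases where a tail of the $e_i$'s equals~$1$; I plan to circumvent this by a careful choice of the index~$j$ and, in the most degenerate configurations, by using the richer family of double transvections from Proposition~\ref{prop:nilpotent:variant}.
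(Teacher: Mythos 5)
Your proof is correct and follows the same skeleton as the paper's: part (i) via Lemma~\ref{lem:count}; for (ii), grow $\F_p(\phi(x_1)^{E-1})$ up to all of $A_{\phi,0}$ using Lemma~\ref{claim:enlarge}, with Lemma~\ref{lem:A-phi-0} driving the termination contradiction, then assemble $\phi_\alpha$ by polynomial transvections from Corollary~\ref{cor:PolynomialTransvection}. Your Stages C--D are, however, a genuine improvement over the paper's Step~3 as written: the paper says ``construct $g$ with $g\phi(2)=\alpha$,'' which is impossible in general, because $\alpha$ lies in $A_{\phi,1}$ whereas any element of $G$ can only move $\phi(x_2)$ inside $A_{\phi,d_2}$, and these modules of the grading are genuinely different --- take $(e_1,e_2,e_3)=(2,2,2)$ and $\phi=(\zeta,0,0)$ over $\F_{11}$ with $\zeta$ a primitive $7$th root of unity in $\F_{11^3}$: then $A_{\phi,0}=\F_{11}$, $A_{\phi,1}=\F_{11}\zeta$ but $A_{\phi,d_2}=A_{\phi,4}=\F_{11}\zeta^4 \neq A_{\phi,1}$. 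Your repair --- first park in the $A_{\phi,d_j}$ slot an element whose $(E-1)$st power generates $A_{\phi,0}$, then use $\beta_{1;j}^P$ to shift $\phi(x_1)$ within $A_{\phi,1}$ to $\alpha$, then clean up --- is the correct argument and is what the paper must intend. Two small points on Stage B: the ``grow $F$'' move needs only Lemma~\ref{claim:enlarge}(i), which carries no hypothesis and already yields $\bigl|\F_p((\text{new }\phi(x_1))^{E-1})\bigr| \geq \bigl|\F_p(\phi(x_i)^{E-1})\bigr| > |F|$; citing part (ii) there would require verifying its side condition, which you do not do. And the ``$k<E-1$'' technicality you flag is real but handled more simply than you suggest: $\phi(x_j)^{t}P(\phi(x_j)^{E-1})$ depends on $t$ only modulo $E-1$ (absorb copies of $\phi(x_j)^{E-1}$ into $P$), so one may always reduce $t_{1,i}$ or $t_{i,1}$ mod $E-1$ before invoking Lemma~\ref{claim:enlarge}; no appeal to Proposition~\ref{prop:nilpotent:variant} is needed.
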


\begin{proof}
Given any $\beta \in A_{\phi,1}$, we have $0 \neq \beta^{E-1} \in A_{\phi,0}$. Applying Lemma~\ref{lem:count} with $\gamma = \beta^{E-1}$ and $N = E- 1$ 
yields an element $\alpha' \in A_{\phi, 0}$ such that $A_{\phi, 0} = \F_p((\alpha' \beta)^{E-1})$. The assertion (i) holds by setting $\alpha = \alpha' \beta$.

We now focus on (ii). 
We denote the $G$-orbit of $\phi$ by $G\phi$. We also set $N = E-1$, and to lighten the notation, we slightly abuse notation by writing $\psi(i)$ instead of $\psi(x_i)$ for any $\psi \in \Hom(R_n, A)$. 

\setcounter{step}{0}
\begin{step}
For each $i \in \{1, \dots, n\}$ and each $\psi \in G\phi$, there exists $\chi \in G\phi$ such that $\left|\F_p(\chi(1)^N)\right| \geq \left|\F_p(\psi(i)^N)\right|$. 
\end{step}

Clearly, we may assume that $i \geq 2$. 
We invoke Lemma~\ref{claim:enlarge}(i) with exponent $k=t_{1, i} = e_1\dots e_{i-1}$, $\alpha = \psi(i)$ and $\beta = \psi(1)$. This ensures the existence of some $\lambda \in \F_p(\alpha^N)$ such that $\F_p((\beta + \lambda \alpha^k)^N)$ is at least as large as $\F_p(\alpha^N)$. Using Corollary~\ref{cor:PolynomialTransvection}, we construct a polynomial transvection $g \in G$ such that $g\psi(1) = \psi(1) + \lambda \psi(i)^k$ and $g\psi(\ell) = \psi(\ell)$ for all $\ell \geq 2$. We set $\chi = g \psi$, and we are done. 

\begin{step}
There exists $\psi \in G\phi$ such that $A_{\phi, 0} = \F_p(\psi(1)^N)$.
\end{step}

Let $\psi \in G\phi$ and $j \in \{1, \dots, n\}$ be such that  $\F_p(\psi(j)^N) \subseteq A_{\phi, 0}$ is a subfield of the largest possible cardinality. By the previous step, we may assume that $j=1$. In particular $\psi(1) \neq 0$.

Suppose now for a contradiction that $\F_p(\psi(1)^N)$ is strictly contained in $A_{\phi, 0}$. Then, by Lemma~\ref{lem:A-phi-0}, there exists an index $i\geq 2$ such that $\psi(1)^{-d_i}\psi(i) \not \in \F_p(\psi(1)^N)$. Therefore, we have  $\psi(i)^{N} \not \in \F_p(\psi(1)^N)$ or  $\psi(1)^{d_i}\psi(i)^{N-1} \not \in \F_p(\psi(1)^N)$. We then invoke Lemma~\ref{claim:enlarge}(ii) with exponent $k=t_{i, 1} = e_i\dots e_{n}$, $\alpha = \psi(1)$ and $\beta = \psi(i)$. This ensures the existence of some $\lambda \in \F_p(\alpha^N)$ such that $\F_p((\beta + \lambda \alpha^k)^N)$ is strictly larger than $\F_p(\alpha^N)$. As in the previous step, we may then find some $\chi \in G\phi$ with $\chi(i) = \beta + \lambda \alpha^k$. This contradicts the maximality property of $\psi$. 

\begin{step}
For each $\alpha \in A_{\phi,1}$ such that  $A_{\phi,0} = \F_p(\alpha^N)$, we have $\phi_\alpha \in G\phi$.
\end{step}

By the previous step, we may assume that $A_{\phi,0} = \F_p(\phi(x_1)^N)$. For all $j\not= 1$, we now apply Corollary~\ref{cor:PolynomialTransvection} to construct a polynomial transvection $g_j \in \prod_*\Trans j1* \subset G$ such that $g_j\phi(x_j) = \phi(x_j) + \lambda_j$ for any $\lambda_j \in A_{\phi,j}$. Using a suitable choice of $\lambda_j$ and taking a product over all $j$, we  obtain $g \in G$ such that 
$g\phi(1) = \phi(1)$, 
$g\phi(2) = \alpha$ and $g\phi(i) = 0$ for all $i \not= 1,2$. Applying Corollary~\ref{cor:PolynomialTransvection} a second time, we find $h$ such that $hg\phi(1) = \alpha$ and $hg\phi(i) = g\phi(i)$ for all $i\not = 2$. Finally, we do the same thing a third time to map $hg\phi$ to $\phi_\alpha$. This finishes the proof. 
\end{proof}

\begin{coro}\label{cor:Extension-prime-degree}
Assume that $p \geq E$. Let $q = p^\ell$, where  $\ell$ is a prime with $\ell \geq E$. Then the $G$-action on $\F_q^n$  has exactly three orbits, namely $\{(0,\dots,0)\}$, $\F_p^n \setminus \{(0,\dots,0)\}$ and $\F_q^n \setminus \F_p^n$.
\end{coro}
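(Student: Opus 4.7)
The plan is to apply Theorem~\ref{thm:transitivity}, which shows that non-zero $G$-orbits on $\F_q^n$ are classified by the pair of invariants $(A_{\phi, 0}, A_{\phi, 1})$. The hypothesis that $\ell$ is prime keeps the lattice of subfields of $\F_q$ very small (only $\F_p$ and $\F_q$), so we just need to enumerate the possible invariants.

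First, the origin is a fixed point since every generator of $G$ is a tame automorphism fixing the zero homomorphism, so $\{(0,\dots,0)\}$ is one orbit. Now suppose $\phi \in \F_q^n$ is non-zero. Since $\phi(1) = 1 \in A_\phi$ and $\F_q$ has exactly two subfields ($\F_p$ and $\F_q$), the field $A_\phi$ is either $\F_p$ or $\F_q$, and the same holds for $A_{\phi, 0}$. The next step is to argue that $A_{\phi, 0}$ and $A_{\phi, 1}$ are forced by whether $\phi \in \F_p^n$ or not.

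\textbf{Case 1:} $\phi \in \F_p^n \setminus \{0\}$. Then $\phi(x_i) \in \F_p$ for all $i$, so $A_\phi \subseteq \F_p$, and since $1 \in A_\phi$ we have $A_\phi = \F_p$. In particular $A_{\phi, 0} = \F_p$. By Lemma~\ref{orbit:invariants}, $A_{\phi, 1}$ is a one-dimensional $\F_p$-subspace of $\F_p$, hence equals $\F_p$. Thus $(A_{\phi, 0}, A_{\phi, 1}) = (\F_p, \F_p)$ for every such $\phi$.

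\textbf{Case 2:} $\phi \in \F_q^n \setminus \F_p^n$. Then some $\phi(x_i) \notin \F_p$, so $A_\phi \not\subseteq \F_p$, forcing $A_\phi = \F_q$. The key point — and the crux of the argument — is that $A_{\phi, 0}$ must equal $\F_q$ as well. Indeed, the last assertion of Lemma~\ref{orbit:invariants} gives $[A_\phi : A_{\phi, 0}] \leq E - 1$, so if $A_{\phi, 0} = \F_p$ we would obtain $\ell = [\F_q : \F_p] \leq E - 1$, contradicting $\ell \geq E$. Therefore $A_{\phi, 0} = \F_q$, and then $A_{\phi, 1}$, being a one-dimensional $\F_q$-subspace of $A_\phi = \F_q$, equals $\F_q$ as well. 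Thus $(A_{\phi, 0}, A_{\phi, 1}) = (\F_q, \F_q)$ for every such $\phi$.

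Applying Theorem~\ref{thm:transitivity} in each case shows that $\F_p^n \setminus \{0\}$ is a single orbit and $\F_q^n \setminus \F_p^n$ is a single orbit, completing the proof. The main obstacle is Case 2, where the hypothesis $\ell \geq E$ is used precisely to exclude $A_{\phi, 0} = \F_p$; without it, one could imagine $\phi$'s for which the degree-zero invariants are smaller than $A_\phi$, giving extra orbits.
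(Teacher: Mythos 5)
Your proposal is correct and follows essentially the same route as the paper: both invoke the degree bound $[A_\phi : A_{\phi,0}] \leq E-1$ from Lemma~\ref{orbit:invariants} to exploit $\ell \geq E$ and the primality of $\ell$, and both conclude via the orbit classification of Theorem~\ref{thm:transitivity}. The only cosmetic difference is that the paper starts the case split on $A_{\phi,0} = \F_p$ versus $A_{\phi,0} = \F_q$ and deduces where $\phi$ lies, whereas you start from whether $\phi$ lies in $\F_p^n$ and deduce the invariants; this is the contrapositive of the same implication.
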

\begin{proof}
Under the assumptions,
the only subfields of $\F_q$ are $\F_p$ and $\F_q$. Given a non-zero  $\phi \in \F_q^n$, we have either $A_{\phi, 0} = \F_p$ or $A_{\phi, 0} = \F_q$.

In the former case, using Lemma~\ref{orbit:invariants} we obtain 
$$[A_\phi : \F_p] = [A_\phi : A] \leq E-1 < \ell = [\F_q : \F_p],$$
which implies that $A_\phi = \F_p$. Thus $\phi$ takes its values in $\F_p$, and $A_{\phi, s} = \F_p$ for all $s$. It follows from Theorem~\ref{thm:transitivity} that the $G$-orbit of $\phi$ is $\F_p^n \setminus \{(0,\dots,0)\}$ in this case.

In the latter case, we have $ A_{\phi, 0} = A_{\phi, s}$ for all $s$. By Theorem~\ref{thm:transitivity}, the $G$-orbit of  $\phi$ depends only on the pair  $( A_{\phi, 0}, A_{\phi, 1})$. It follows that all points of $\F_q^n \setminus \F_p^n$ belong to the same $G$-orbit.
\end{proof}

Lemma~\ref{lem:count} yields the following lower bound, showing the existence of one large orbit, which contains practically all points if  $p> 2E$.

\begin{coro}\label{cor:LargeOrbit}
Assume that $p \geq E$ and let $A$ be a finite field extension of $\F_p$ with $|A| = p^\ell$. All homomorphisms $\phi \in \Hom(R_n, A)$ such that $A_{\phi, 0} = A$ form a single $G$-orbit, whose cardinality is greater than 
$$p^{\ell n} \left( 1 -\frac{(E-1)^n}{p^n}\right).$$
\end{coro}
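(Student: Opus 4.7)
The plan is to separate the corollary into its two assertions---the single-orbit statement and the quantitative lower bound---and dispatch each using the results already assembled.

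For the orbit statement, the approach is to reduce to Theorem~\ref{thm:transitivity}, which says that two nonzero $\phi, \psi \in A^n$ lie in the same $G$-orbit precisely when $(A_{\phi,0}, A_{\phi,1}) = (A_{\psi,0}, A_{\psi,1})$. Since $A_{\phi,0} = A$ is given, only $A_{\phi,1}$ remains to be identified. Here I would invoke Lemma~\ref{orbit:invariants}, which asserts that $A_{\phi,1}$ is a one-dimensional module over $A_{\phi,0}$, hence in particular nonzero. When $A_{\phi,0} = A$, the only nonzero one-dimensional $A$-submodule of $A$ is $A$ itself, forcing $A_{\phi,1} = A$. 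Hence every $\phi$ with $A_{\phi,0} = A$ shares the invariant pair $(A, A)$, and the single-orbit claim follows.

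For the cardinality bound, the central observation is that $x_j^{E-1}$ belongs to $R_n^{(0)}$ for every $j \in \{1, \dots, n\}$---indeed, its weighted degree $(E-1) d_j$ (with $d_j = e_j e_{j+1} \cdots e_n$) is trivially a multiple of $E-1$. Therefore $\phi(x_j)^{E-1} \in A_{\phi,0}$ for each $j$. Contrapositively, if $A_{\phi,0} \neq A$, then $\F_p(\phi(x_j)^{E-1}) \neq A$ for every single $j$. Applying Lemma~\ref{lem:count} with $\gamma = 1$ and $N = E-1$, the number of $\alpha \in A$ with $\F_p(\alpha^{E-1}) \neq A$ is at most $((E-1)/p) |A|$. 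Since the coordinates $\phi(x_1), \dots, \phi(x_n)$ may be chosen independently, the total count of ``bad'' $\phi$ is bounded above by $((E-1)/p)^n |A|^n = (E-1)^n p^{\ell n - n}$. Subtracting this from $|A|^n = p^{\ell n}$ yields the claimed lower bound.

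I do not anticipate any serious obstacle: both assertions follow essentially immediately from the structural results in hand. The only subtlety is noticing that the $n$ monomials $x_1^{E-1}, \dots, x_n^{E-1}$ all sit in $R_n^{(0)}$, which is exactly what provides the $n$ independent coordinates needed to upgrade the single-coordinate bound $1 - (E-1)/p$ of Lemma~\ref{lem:count} to the product bound $1 - ((E-1)/p)^n$ that matches the target estimate.
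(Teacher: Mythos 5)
Your proof is correct and matches the paper's argument in all essentials: the single-orbit claim is reduced to Theorem~\ref{thm:transitivity}, and the cardinality bound comes from noting that $\phi(x_j)^{E-1}\in A_{\phi,0}$ for every $j$ and then counting, coordinate by coordinate, those $\phi$ for which every $\phi(x_j)^{E-1}$ generates a proper subfield. The only cosmetic differences: you spell out via Lemma~\ref{orbit:invariants} why $A_{\phi,0}=A$ forces $A_{\phi,1}=A$ (the paper treats this as immediate from Theorem~\ref{thm:transitivity}), and you invoke Lemma~\ref{lem:count} directly for the per-coordinate estimate where the paper reproduces the same counting inline via $\sum_{d\mid\ell,\,d<\ell}p^d$. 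One small point to be aware of: your computation, like the paper's, most cleanly delivers the bound with ``$\geq$'' rather than the stated strict ``$>$''; the slack comes from the counting being non-tight (e.g.\ the bound $Np^{l'}$ per proper subfield overcounts, and the Lemma's chain of inequalities is strict for $\ell\geq 3$), but this is an artifact of the original statement rather than a defect of your argument.
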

\begin{proof}
That the  homomorphisms $\phi$ with $A_{\phi, 0} = A$ form a single $G$-orbit follows directly from Theorem~\ref{thm:transitivity}. A sufficient condition ensuring that $A_{\phi, 0} = A$ is that $\phi(x_i)^{E-1}$ is not contained in a proper subfield of $A$ for some $i \in \{1, \dots, n\}$. The number of those $\phi$ such that $\phi(x_i)^{E-1}$ is  contained in a proper subfield of $A$ for each $i$ is at most $(E-1)^n\left( \sum_{d|\ell, d< \ell} p^d \right)^n$. It follows that the number of those $\phi$ such that $A_{\phi, 0} = A$ is at least
$
p^{\ell n} - (E-1)^n\left( \sum_{d|\ell, d< \ell} p^d \right)^n.
$
For $l\geq 4$, we have 
$\sum_{d|\ell, d< \ell} p^d \leq \sum_{ d\leq \ell/2} p^d \leq p^{n/2} \frac{p}{p-1}$, which implies
\begin{align*}
p^{\ell n} - (E-1)^n\left( \sum_{d|\ell, d< \ell} p^d \right)^n 
& \geq p^{\ell n} - p^{\ell n/2} \left(  (E-1) \frac{ p}{p-1}\right)^n \\
&> p^{\ell n} \left( 1 -\frac{(E-1)^n}{p^n}\right),
\end{align*}
as required, since $p^{-l/2} \frac{p}{p-1}  < 1/p$ for $l\geq 4$.
For $l=2,3$ one can directly verify that 
$$
p^{\ell n} - (E-1)^n\left( \sum_{d|\ell, d< \ell} p^d \right)^n = 
p^{\ell n} - (E-1)^n p^n > p^{\ell n} \left( 1 -\frac{(E-1)^n}{p^n}\right).
$$
\end{proof}

\subsection{Action of $\Z \ltimes \Z/(E-1)\Z$ commuting with $G$} 

Let $A$ be an algebraic field extension of $\F_p$. 
We  now describe an action of the semi-direct product 
$$\Gamma = \Z \ltimes \Z/(E-1)\Z$$ 
that commutes with the $G$-action. Let 
$$F\colon A \to A : a \mapsto a^p$$ 
denote the Frobenius automorphism. The group generated by $F$, acts by post-composition on $A^n \cong \Hom(\F_p[x_1, \dots, x_n], A)$, hence it commutes with the $G$-action. We therefore obtain a $\Z$-action by sending the generator $1$ to $F$. Since every subfield of $A$ is $F$-invariant, we deduce from Lemma~\ref{orbit:invariants} that for each $\phi \in \Hom(\F_p[x_1, \dots, x_n], A)$, the sets $A_\phi$ and $A_{\phi, 0}$ are invariant under $F$, while $A_{\phi, s}$ need not be for $s \geq 1$.

We now describe an action of the cyclic group $\Z/(E-1)\Z$ as follows.

\begin{lem}
\label{lem:cyclic-order-E-1}
Let $n \geq 3$ and $e_1, \dots, e_n \geq 1$ be integers  
and $A$ is a commutative unital $\F_p$-algebra. Let also  $\zeta \in A$ be 
an element such that $\zeta^{E-1} = 1$. 

Then the $G$-action on $A^n$ commutes with the  cyclic group of permutations generated by 
$$
m_\zeta \colon (a_1, \dots, a_n) \mapsto (\zeta a_1,\zeta^{E/e_1}a_2, \zeta^{E/e_1e_2}a_3, \dots , \zeta^{e_n} a_n).
$$
\end{lem}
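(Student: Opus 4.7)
My plan is to reinterpret the permutation $m_\lambda$ via the grading on $R_n = \F_p[x_1, \dots, x_n]$ introduced in Definition~\ref{grading}, and then exploit the fact --- already noted in the claim following that definition --- that the $G$-action preserves this grading.

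First I would observe that the exponents of $\lambda$ appearing in the formula for $m_\lambda$ are exactly the degrees of the variables: for $i \geq 2$ one has $\deg(x_i) = e_i e_{i+1}\cdots e_n = E/(e_1 \cdots e_{i-1})$, while for $i=1$ the stated exponent $1$ agrees with $\deg(x_1) = E$ modulo $E-1$, which is all that matters since $\lambda^{E-1} = 1$. Under the identification $A^n \cong \Hom(R_n, A)$ sending $(a_1, \dots, a_n)$ to the evaluation map $\phi$, the permutation $m_\lambda$ therefore becomes $\phi \mapsto \phi_\lambda$, where $\phi_\lambda$ is the unique homomorphism with $\phi_\lambda(x_i) = \lambda^{\deg(x_i)} \phi(x_i)$.

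The main computational step is the formula
$$\phi_\lambda(f) = \lambda^s \phi(f) \qquad \text{for all } f \in R_n^{(s)},$$
which follows from a one-line monomial calculation (the exponent of $\lambda$ collected from $\prod_j \phi_\lambda(x_{i_j})^{k_j}$ is exactly the degree of the monomial), and uses $\lambda^{E-1}=1$ so that $\lambda^s$ depends only on $s \bmod (E-1)$. Granted this formula, the commutation becomes essentially automatic. The $G$-action is $(g\cdot \phi)(f) = \phi(g^{-1}(f))$, and each generator of $G$ preserves the grading, so $g^{-1}(f) \in R_n^{(s)}$ whenever $f \in R_n^{(s)}$. Consequently both $\bigl(m_\lambda \cdot (g\cdot \phi)\bigr)(f)$ and $\bigl(g \cdot (m_\lambda \cdot \phi)\bigr)(f)$ reduce to $\lambda^s \phi(g^{-1}(f))$, and the two permutations agree on all of $R_n$ by linearity.

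It remains only to verify that the powers of $m_\lambda$ form a cyclic group of permutations, which is formal: a direct check gives $m_{\mu_1} \circ m_{\mu_2} = m_{\mu_1 \mu_2}$, so $\mu \mapsto m_\mu$ is a group homomorphism from the $(E-1)^\text{st}$ roots of unity in $A$ into $\Sym(A^n)$, and its image at $\lambda$ has order dividing $E-1$. I do not anticipate any serious obstacle: the whole argument is bookkeeping once one recognizes $m_\lambda$ as the scaling induced on $A^n = \Hom(R_n, A)$ by multiplication by $\lambda^{\deg(\cdot)}$ on each homogeneous component of $R_n$.
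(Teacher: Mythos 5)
Your proof is correct, and it takes a genuinely different (more conceptual) route than the paper's. The paper simply computes: it writes out the action of a generator $\tau_i(r)$ on coordinates, applies $m_\lambda$ on each side, and checks that the $i$-th coordinate of $\tau_i(r) m_\lambda(a_1,\dots,a_n)$ equals $\lambda^{E/e_1\cdots e_{i-1}}(a_i - r a_{i+1}^{e_i})$, which is the $i$-th coordinate of $m_\lambda\tau_i(r)(a_1,\dots,a_n)$. You instead recognize $m_\lambda$ as the scaling on $\Hom(R_n,A)$ dual to multiplying each homogeneous component $R_n^{(s)}$ by $\lambda^s$ (after noting that the exponents of $\lambda$ in the definition of $m_\lambda$ match $\deg(x_i)$ modulo $E-1$, including the $i=1$ case where $\deg(x_1)=E\equiv 1$), and then conclude by combining the identity $(m_\lambda\cdot\phi)(f)=\lambda^s\phi(f)$ for $f\in R_n^{(s)}$ with the fact recorded in the paper's Claim after Definition~\ref{grading} that $G$ preserves the grading. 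This is cleaner: the paper's coordinate computation is really just an unwinding of your argument for the generators, and the computational content of "why the coordinates work out" is exactly that the transvection $x_i\mapsto x_i+rx_{i+1}^{e_i}$ is homogeneous, which is the grading claim. One small advantage of your phrasing is that it shows the commutation holds not just for $G$ but for any automorphism of $R_n$ preserving the grading, which is relevant to Remark~\ref{rem:more-transvections}. Your final remark that $\mu\mapsto m_\mu$ is a homomorphism is harmless but superfluous for the statement: commutation with $m_\lambda$ automatically implies commutation with every power of $m_\lambda$, hence with the cyclic group it generates.
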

\begin{proof}
Recall that $G$ is generated by $\tau_1(r), \dots, \tau_n(r)$, where $\tau_i(r) = \trans i{i+1}{e_i}r$. 
The automorphism $\tau_i(r)$ maps $(a_1, \dots, a_n)$ on $(a'_1, \dots, a'_n)$, where%
\footnote{We recall from   \S 7 that the action of $G$ on $A^n$ arises from the action of $G$ on $R_n = \F_p[x_1,\dots, x_n]$ and the natural map between  $\Hom(R_n,A)$ with $A^n$. We recall that, using that map, the transvection $\tau_i(r)$ acts as $x_i \mapsto x_i + rx_{i+1}^{e_i}$ on $R_n$, and by $a_i \mapsto a_i - ra_{i+1}^{e_i}$ on $A^n$.}
$a'_i = a_i - r a_{i+1}^{e_i}$ and $a'_s = a_s$ for all $s \neq i$. To check that 
$\tau_i(r) m_\zeta = m_\zeta \tau_i(r)$, it suffices to consider the $i^\mathrm{th}$-coordinate. 
The $i^\mathrm{th}$-coordinate of $\tau_i(r) m_\zeta (a_1,\dots, a_n)$ equals 
$$
\zeta^{E/e_1 \dots e_{i-1}}a_i - r (\zeta^{E/e_1 \dots e_{i}}a_{i+1})^{e_i} = \zeta^{E/e_1 \dots e_{i-1}}(a_i - r a_{i+1}^{e_i}).
$$ 
The right-hand-side is the $i^\mathrm{th}$-coordinate of $m_\zeta \tau_i(r) (a_1,\dots, a_n)$, as required. 
\end{proof}

From now on and in the rest of this paper, we choose $\zeta \in A$ to be a generator of the cyclic group of $(E-1)$-roots of unity. We do not require that the multiplicative order of $\zeta$ equal $E-1$.
We obtain a $\Z/(E-1)\Z$-action by sending the generator $1$ on $m_\zeta$.  Since $F\circ m_\zeta \circ F^{-1} = m_{F(\zeta)}$, we indeed obtain an action of the semi-direct product 
$\Gamma = \Z \ltimes \Z/(E-1)\Z$  
that commutes with the $G$-action. That action need not be faithful: if $A$ is finite the Frobenius automorpisms is of finite order, so the group $\Z$ acts via a proper quotient; similarly if the order of $\zeta$ is less than $E-1$ (this happens if $p$ divides $E-1$, even if $A$ is algebraically closed), the group $\Z/(E-1)\Z$ also acts via a proper quotient.

The orbit invariant $A_{\phi,0}$ is  preserved by the $\Gamma$-action. However $A_{\phi,1}$ and $A_{\phi}$ are not in general.

The following observation is useful to distinguish the $\Gamma$-orbits on the affine space $A^n \cong \Hom(\F_p[x_1, \dots, x_n], A)$. As before, 
we define $\phi_\alpha \in \Hom(\F_p[x_1, \dots, x_n], A)$ by $x_1 \mapsto \alpha$ and $x_s \mapsto 0$ for all $s \geq 2.$ 

\begin{lemma}\label{lem:Gamma-orbits}
Let $A$ 
be an algebraic field extension of $\F_p$  and let $\alpha_1, \dots, \alpha_s \in A$.

\begin{enumerate}[label=(\roman*)]
    \item The homomorphisms $\phi_{\alpha_1}, \dots, \phi_{\alpha_s}$ lie in pairwise distinct $\Gamma$-orbits if and only if the minimal polynomials of $\alpha_1^{E-1}, \dots, \alpha_s^{E-1}$ over $\F_p$ are pairwise distinct. 
    
    \item Let $L \subseteq A$ be a finite subfield of degree $\ell$ over $\F_p$.    Assume that $\F_p(\alpha_i^{E-1})  =L$ for all $i=1, \dots, s$, and that  the minimal polynomials of $\alpha_1^{E-1}, \dots, \alpha_s^{E-1}$ over $\F_p$ are pairwise distinct.  Let  $k \geq s+1$ be an integer. Assume  that
    $$ 
    k \leq \frac{p^{\ell-1} (p-E)}{\ell  E}
    $$
    if $L \neq \F_p$, and that $k\leq \frac{p-1}{E-1}$ if $L = \F_p$. 
    Then there exist elements $\alpha_{s+1}, \dots , \alpha_k \in L$ such that $\F_p(\alpha_i^{E-1}) = L$ for all $i$, and the minimal polynomials of $\alpha_1^{E-1}, \dots, \alpha_k^{E-1}$ over $\F_p$ are pairwise distinct.
\end{enumerate}

\end{lemma}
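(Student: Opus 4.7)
The plan is to translate both statements into elementary arithmetic facts about the elements $\lambda^i \alpha^{p^j}$, using Lemma~\ref{lem:count} as the main counting input. First, since $\phi_\alpha$ sends $x_2, \dots, x_n$ to zero, the Frobenius satisfies $F(\phi_\alpha) = \phi_{\alpha^p}$ and the permutation $m_\lambda$ satisfies $m_\lambda(\phi_\alpha) = \phi_{\lambda \alpha}$. Hence the $\Gamma$-orbit of $\phi_\alpha$ is precisely $\{\phi_{\lambda^i \alpha^{p^j}} \mid i, j \in \Z\}$, and to prove~(i) it suffices to check that $\beta = \lambda^i \alpha^{p^j}$ for some $i, j$ if and only if $\alpha^{E-1}$ and $\beta^{E-1}$ share a minimal polynomial over $\F_p$. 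For the direct implication, $\lambda^{E-1} = 1$ gives $\beta^{E-1} = (\alpha^{E-1})^{p^j}$, which is a Frobenius conjugate of $\alpha^{E-1}$. Conversely, two elements of an algebraic closure of $\F_p$ with the same minimal polynomial over $\F_p$ are Frobenius conjugate, so $\beta^{E-1} = (\alpha^{E-1})^{p^j}$ for some $j$; then $(\beta/\alpha^{p^j})^{E-1} = 1$ (assuming $\alpha \neq 0$; otherwise $\beta = 0$ as well and the statement is trivial), so $\beta/\alpha^{p^j}$ lies in the cyclic group of $(E-1)$-st roots of unity in $A$, hence is a power of $\lambda$ by our choice of $\lambda$.

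For~(ii), I would build the $\alpha_i$'s one at a time. Suppose $\alpha_1, \dots, \alpha_j$ have already been chosen with $s \leq j < k$ satisfying the two conditions. I then want to find $\alpha_{j+1} \in L$ with $\F_p(\alpha_{j+1}^{E-1}) = L$ whose minimal polynomial over $\F_p$ is distinct from those of $\alpha_1^{E-1}, \dots, \alpha_j^{E-1}$. Applying Lemma~\ref{lem:count} with $N = E-1$, $\gamma = 1$ and $q = |L| = p^\ell$, which is legitimate since $p \geq E > E-1$, shows that the first condition holds for at least $p^\ell (p - E + 1)/p$ elements $\alpha \in L$. On the other hand, each of the $j$ forbidden minimal polynomials has degree $\ell$ and hence exactly $\ell$ roots in $L$, and each such root has at most $E - 1$ preimages under $\alpha \mapsto \alpha^{E-1}$, so the excluded set has size at most $j \ell (E-1) \leq (k-1)\ell(E-1)$.

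The induction step therefore succeeds provided $(k-1)\ell(E-1) < p^\ell (p - E + 1)/p$, and I would finish by verifying that this strict inequality follows from the hypothesis $k \leq p^\ell (p - E)/(\ell p E)$. After clearing denominators, the required inequality reduces to $(p - E)(E - 1) - E(p - E + 1) < 0$, and a direct computation gives the left-hand side equal to $-p$, which is indeed negative. I do not anticipate any serious obstacle here; the only delicate point is keeping careful track of $\lambda$ as a generator of the (possibly proper) cyclic group of $(E-1)$-st roots of unity actually present in $A$, which is precisely what is needed to close the converse direction of part~(i).
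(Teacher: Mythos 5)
Your proof is correct and takes essentially the same approach as the paper: part~(i) is virtually identical, resting on the identification of the $\Gamma$-orbit of $\phi_\alpha$ with $\{\phi_{(\lambda^i\alpha)^{p^j}}\}$ and the $\langle F\rangle$-invariance of the group of $(E-1)$-st roots of unity. In part~(ii) you run a greedy induction (pick $\alpha_{j+1}$ avoiding the $j$ forbidden minimal polynomials) while the paper directly counts $\Gamma$-orbits meeting the set $L_0$ of generators, but both arguments reduce to the same comparison between the size of $L_0$ from Lemma~\ref{lem:count} and the orbit-size bound $\ell(E-1)$, and your algebraic verification that the hypothesis $k \leq p^\ell(p-E)/(\ell p E)$ suffices is accurate.
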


\begin{proof}
Two elements of $A$ have the same minimal polynomial over $\F_p$ if and only if they belong to the same $\langle F \rangle$-orbit. Let $\alpha , \beta \in A$. Then $\phi_\alpha$ and $\phi_{\beta}$ are in the same $\Gamma$-orbit if and only if there exist integers $i \in \{0, 1, \dots, E-1\}$ and $j \in \mathbf N$  such that $\beta = (\zeta^i \alpha)^{p^j}$. 

If the latter holds, then we have $\beta^N = (\alpha^N)^{p_j}$, where $N=E-1$. Hence $\alpha^N$ and $\beta^N$ have the same minimal polynomial over $\F_p$. Conversely, if $\beta^N = (\alpha^N)^{p_j}$, then $\beta^{-1}\alpha^{p_j} \in A$ is an $N^{\text{th}}$ root of unity, so there exists an integer  $m$ with $\beta^{-1}\alpha^{p_j} = \zeta^m$. Since the multiplicative group of $N^{\text{th}}$ roots of unity is $\langle F\rangle$-invariant, this implies that there exists $i \in \mathbf Z$ with $\beta = (\zeta^i \alpha)^{p^j}$. The assertion (i) follows. 

For the assertion (ii), we use a similar counting argument as in the proof of Lemma~\ref{lem:count}. The number of elements $\beta \in L$ is such that $\F_p(\beta^N) \neq L$ is at most $N \sum_{t=1}^{\ell-1} p^t = N \frac{p^\ell-p}{p-1}$ since $|L| = p^\ell$. Hence the complement of that set, that we denote by $L_0$, has cardinality at least $\frac{p^\ell(p-E)+p(E-1)}{p-1}$. The group $\Gamma$ acts on $A$ by permutations via the map $\alpha \mapsto \phi_\alpha$. 
As observed at the beginning of the proof, two elements $\alpha, \beta \in L$ are in the same $\Gamma$-orbit if and only if there exist integers $i \in \{0, 1, \dots, E-2\}$  and $j \in \mathbf N$  such that $\beta = (\zeta^i \alpha)^{p^j}$. Since $\zeta$ is a generator of the subgroup of $(E-1)^\mathrm{st}$-roots of $1$ in $A$, the subfield $\F_p(\zeta)$ is invariant under the Frobenius automorphism. Therefore we have  $\beta = \zeta^s \alpha^{p^j}$ for some $s \in \{0, 1, \dots, E-2\}$. Since $\alpha$ belongs to $L$, which has order~$p^\ell$, we may take $j \leq \ell-1$. It follows that the $\Gamma$-orbit of any element $\alpha \in L$ is of size at most $\ell(E-1)$. 
Hence the elements of $L_0$ fall into at least $K$ distinct $\Gamma$-orbits, where
    $$K =  \frac{p^\ell(p-E) + p(E-1)}{\ell(E-1)(p-1)}.$$
Clearly, we have $\frac{p^{\ell-1} (p-E)}{\ell  E} = \frac{p^\ell (p-E)}{\ell p E} \leq K$. The assertion (ii) follows in case $L \neq \F_p$. 

If $L =\F_p$ (i.e. $\ell=1$), the argument above simplifies. Indeed, for each non-zero $\beta \in L$, we have $\F_p(\beta^N) = \F_p$, so that $L_0= \F_p\setminus \{0\}$ has cardinality $p-1$ in this case. The $\Gamma$-orbit of any element $\alpha \in \F_p$ is of size at most $E-1$, so    the elements of $L_0$  fall into at least $\frac{p-1}{E-1}$ distinct $\Gamma$-orbits. The assertion (ii) follows since $k\leq \frac{p-1}{E-1}$ by  hypothesis. 
\end{proof}

\subsection{Higher transitivity}

Throughout this section, we assume that $n \geq 3$. 
The $G$-orbits on $A^n$ are described by Theorem~\ref{thm:transitivity}. Moreover Theorem~\ref{thm:k-transitivity-base-field} shows that, under suitable assumptions on $p$ and $E$, the $G$-action on one of the orbits, namely $\mathbf F_p^n \setminus\{0\}$,  is $k$-transitive for all sufficiently small $k$. Our next goal is to show that the $G$-action is almost $k$-transitive on each $G$-orbit. The obstruction to being $k$-transitive in the strict sense comes from the $\Gamma$-action, that commutes with the $G$-action. The following theorem shows that this is the only obstruction: on each $G$-orbit, the $G$-action is $k$-transitive on the blocks of imprimitivity formed by the $\Gamma$-orbits. 

As before, we let $R_n =\F_p[x_1, \dots, x_n]$, $\mathbf e = (e_1, \dots, e_n)$, $G = G_{\F_p, \mathbf e }$ and  $E=e_1\dots e_n$. Given a field extension $A$ of $\F_p$ and  $\phi \in \Hom(R_n, A)$, we use the notation $A_{\phi, s}$ from \S\ref{sec:orbits}. 

\begin{thm}
\label{thm:almost-k-transitivity}
Let $\F_p \subseteq L \subseteq A$ be finite field extensions of $\F_p$, and let   $\ell = [L:\F_p]$. We assume that 
$$p \geq 3E-2.$$
Let $k$ be an integer, %
and assume that $k \leq \frac{p^{\ell-1} (p-E)}{\ell  E}$ if $L \neq \F_p$,  and that $k \leq \frac{p-1}{E-1}$ if $L= \F_p$. Let also $\phi_1, \dots, \phi_k \in \Hom(R_n, A) \cong A^n$ be homomomorphisms such that $A_{\phi_i, 0} = L$ for all $i$. For each $i$, let also $\alpha_i \in A_{\phi_i, 1}$ be such  that $L = \F_p(\alpha_i^{E-1})$. 

Assume that $\phi_1, \dots, \phi_k$ (resp. $\phi_{\alpha_1}, \dots, \phi_{\alpha_k}$) belong to pairwise distinct $\Gamma$-orbits. 
Then there exists an element $g \in G$ and, for each $i=1, \dots, k$, an element $\gamma_i \in \Gamma$ such that 
$$
g(\gamma_i(\phi_i)) = \phi_{\alpha_i}
\,\, \mbox{for all} \,\, i=1, \dots, k.
$$
\end{thm}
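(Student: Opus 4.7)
Since $G$ and $\Gamma$ commute as permutation groups on $A^n$, the required equality $g(\gamma_i(\phi_i)) = \phi_{\alpha_i}$ is equivalent to asking that $g\phi_i$ lie in the $\Gamma$-orbit of $\phi_{\alpha_i}$ for every $i$. By Lemma~\ref{lem:Gamma-orbits}(i) this $\Gamma$-orbit is determined by the minimal polynomial of $\alpha_i^N$ over $\F_p$ (where $N = E-1$), and the hypothesis ensures these minimal polynomials are pairwise distinct. It therefore suffices to produce a single $g \in G$ such that for each $i$, $g\phi_i = \phi_{\beta_i}$ for some $\beta_i \in A$ whose $N$-th power shares the minimal polynomial of $\alpha_i^N$.

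The core argument is an induction on $s \in \{0, 1, \dots, k\}$ mirroring the proof of Theorem~\ref{thm:k-transitivity-base-field}: we shall build $g_s \in G$ such that for every $i \leq s$ we have $(g_s\phi_i)(j) = 0$ for all $j < n$, while $\xi_i := (g_s\phi_i)(n)$ satisfies $\F_p(\xi_i^N) = L$, and moreover the minimal polynomials of $\xi_1^N, \dots, \xi_s^N$ over $\F_p$ are pairwise distinct. Two features of the polynomial transvections from Corollary~\ref{cor:PolynomialTransvection} enable the inductive step: transvections $\beta_{n;j}^P$ with $j < n$ modify only the $n$-th coordinate of $g_s\phi_{s+1}$ while fixing each $g_s\phi_i$ for $i \leq s$, since the latter's $j$-th coordinates vanish; transvections $\beta_{j;n}^P$ with $P$ vanishing on $\xi_1^N, \dots, \xi_s^N$ (constructible by Lemma~\ref{lem:polynomial-map-k-tuple}) modify only the $j$-th coordinate of $g_s\phi_{s+1}$ while preserving the normal form on the earlier $\phi_i$'s; analogous transvections $\beta_{j;j'}^P$ for $j,j'<n$ with $P(0)=0$ are available for similar intermediate manipulations.

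The inductive step from $s$ to $s+1$ proceeds in three stages. First, alternating transvections of the above types and adapting the proof of Theorem~\ref{thm:transitivity} via iteration of Lemma~\ref{claim:enlarge}(i)--(ii), we enlarge $\F_p((g_s\phi_{s+1})(n)^N)$ until it equals the $G$-invariant $A_{g_s\phi_{s+1},0} = L$. Second, we arrange that the minimal polynomial of the new $(g_s\phi_{s+1})(n)^N$ differs from those of $\xi_1^N, \dots, \xi_s^N$: the bound $k \leq p^\ell(p-E)/(\ell p E)$ together with Lemma~\ref{lem:Gamma-orbits}(ii) guarantees sufficiently many $\Gamma$-orbits of full-degree elements in $L$, and a counting argument along an $L$-line of perturbations produced by a further $\beta_{n;j}^P$ (after preparing some coordinate $(g_s\phi_{s+1})(j)$ with $j<n$ whose $N$-th power generates $L$) yields a representative in an unused orbit. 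Third, for each $j<n$ we zero out $(g_s\phi_{s+1})(j)$ using $\beta_{j;n}^{P_j}$, where $P_j \in \F_p[y]$ is obtained from Lemma~\ref{lem:polynomial-map-k-tuple} by prescribing the value $0$ on $\xi_1^N, \dots, \xi_s^N$ and the value $-(g_s\phi_{s+1})(j)/\xi_{s+1}^{t_{j,n}} \in L$ on $\xi_{s+1}^N$.

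Once the induction reaches $s=k$, each $g_k\phi_i$ is concentrated on the $n$-th coordinate with value $\xi_i$. Since the $L$-line $A_{\phi_i,1}$ is $G$-invariant and simultaneously equals $L\alpha_i$ and $L\xi_i^{t_{1,n}}$, there exists a unique $c_i \in L^*$ with $\alpha_i = c_i\xi_i^{t_{1,n}}$. A polynomial $P \in \F_p[y]$ with $P(\xi_i^N) = c_i$ for all $i$ (Lemma~\ref{lem:polynomial-map-k-tuple}) produces the transvection $\beta_{1;n}^P$ that sends the $1$st coordinate of $g_k\phi_i$ to $\alpha_i$. Then a polynomial $Q \in \F_p[y]$ with $Q(\alpha_i^N) = -\xi_i/\alpha_i^{t_{n,1}}$ for all $i$ (again Lemma~\ref{lem:polynomial-map-k-tuple}; the distinctness of the $\alpha_i^N$'s is our hypothesis, and the prescribed values lie in $L$ via the identity $\alpha_i^{t_{n,1}} = c_i^{e_n}\xi_i^{E}$) produces $\beta_{n;1}^Q$ zeroing out the $n$-th coordinate. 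The resulting $g \in G$ satisfies $g\phi_i = \phi_{\alpha_i}$, so we may take each $\gamma_i = \id$. The principal obstacle is the second stage of the inductive step, where the counting via Lemma~\ref{lem:Gamma-orbits}(ii) must be reconciled with the restriction to transvections preserving the normal form; the numerical bounds on $p$ and $k$ in the hypothesis are calibrated precisely to supply the required slack.
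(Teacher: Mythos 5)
Your reduction to producing a single $g\in G$ with $g\phi_i=\phi_{\beta_i}$, $\beta_i^N$ sharing the minimal polynomial of $\alpha_i^N$, is correct, and the overall skeleton (induction, growing $\F_p(\xi_{s+1}^N)$ to $L$ via Lemma~\ref{claim:enlarge}, zeroing coordinates with polynomial transvections furnished by Lemma~\ref{lem:polynomial-map-k-tuple}, the final change of variable from coordinate $n$ to coordinate $1$) is the right one. However, the second stage of your inductive step has a genuine gap, and it is exactly the hard point of the theorem.

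You insist on preserving the normal form of $g_s\phi_1,\dots,g_s\phi_s$ at every stage, i.e.\ on using only transvections that fix each $g_s\phi_i$ for $i\le s$. Now suppose that after Stage~1 the minimal polynomial of $\xi_{s+1}^N$ coincides with that of some $\xi_i^N$, $i\le s$. Then any polynomial $P\in\F_p[y]$ vanishing at $\xi_1^N,\dots,\xi_s^N$ also vanishes at $\xi_{s+1}^N$ (they are Galois conjugate), so every normal-form-preserving transvection $\beta_{j;n}^P$ with $j<n$ acts trivially on the $j$-th coordinate of $g_s\phi_{s+1}$ as well. Thus your proposed ``preparation'' of a coordinate $j<n$ with $\F_p\big((g_s\phi_{s+1})(j)^N\big)=L$ is obstructed by precisely the constraint you are trying to escape. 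The remaining free transvections $\beta_{n;j}^P$ can only shift $\xi_{s+1}$ within the coset $\phi_{s+1}(j)^{t_{n,j}}\,\F_p\big(\phi_{s+1}(j)^N\big)$, and when $\ell$ is prime and $\phi_{s+1}(j)^N\in\F_p$ this range has cardinality only $p$; the number of excluded targets (conjugates of $\xi_1^N,\dots,\xi_s^N$ together with the subfield locus) is much larger for $\ell\ge 3$, so there is not ``enough slack'', and the closing sentence of your argument does not resolve this.

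The paper handles this case with two ideas you do not use. First, a $\Gamma$-alignment: after growing the $1$st coordinate of $\phi_k$ to generate $L$, one replaces $\phi_k$ by $\gamma\phi_k$ so that its first coordinate becomes \emph{equal} to the colliding $\alpha_s$ (this uses the freedom in $\gamma_i$). Second, a round trip that \emph{temporarily breaks} the normal form of the earlier $\phi_i$'s: a single transvection $\beta_{j;1}^P$ is applied, with $P$ prescribed so that $h\phi_m(j)=\beta_m$ for all $m$ (including $m=s$ and, via the alignment, $m=k$), where the $\beta_m$'s are chosen with pairwise distinct minimal polynomials and full degree using Lemma~\ref{lem:Gamma-orbits}(ii) and the $p\ge 3E-2$ estimate. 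Because $P$ is free at $\alpha_s^N$, the value $\delta=\beta_s$ ranges over all of $L$, which is what makes the counting work; the normal form of $\phi_1,\dots,\phi_{k-1}$ is then restored in the final step. Your scheme, which never lets the earlier points move, cannot access that $L$-line and therefore does not go through as written.
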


\begin{proof}
We proceed by induction on $k$. The base case $k=1$ is afforded by Theorem~\ref{thm:transitivity}. Assume henceforth that $k \geq 2$ and that the required conclusion is true for $k-1$. Using the induction hypothesis, we may assume without loss of generality that $\phi_i = \phi_{\alpha_i}$ for each $i=1, \dots, k-1$. It remains to show that there exists $g \in G$ and $\gamma \in \Gamma$ such that $g(\phi_{\alpha_i}) = \phi_{\alpha_i}$ for all $i \leq k-1$, and $g(\gamma(\phi_k)) = \phi_{\alpha_k}$. To construct  those elements, we proceed in several steps, which essentially follow those taken in the proof of Theorem~\ref{thm:transitivity}.  
As before set $N=E-1$ and, to lighten the notation, we slightly abuse notation by writing $\psi(i)$ instead of $\psi(x_i)$ for any $\psi \in \Hom(R_n, A)$.

\setcounter{step}{0}
\begin{step}\label{step:main:1}
For each $j \in \{2, \dots, n\}$, there exists $h \in G$ such that 
$h(\phi_{\alpha_i}) = \phi_{\alpha_i}$ for all $i \leq k-1$, and that 
$\left|\F_p(h\phi_k(1)^N)\right| \geq \left|\F_p(\phi_k(j)^N)\right|$. 
\end{step}

We invoke Lemma~\ref{claim:enlarge}(i) with  $\alpha = \phi_k(j)$ and $\beta = \phi_k(1)$. This ensures the existence of some $\lambda \in \F_p(\alpha^N)$ such that $\F_p\left((\beta + \lambda \alpha^{t_{1, j}})^N\right)$ is at least as large as $\F_p(\alpha^N)$. We can write $\lambda = P(\alpha^N)$ for some polynomial $P \in \F_p[y]$. Using Corollary~\ref{cor:PolynomialTransvection}, we construct a polynomial transvection $h \in G$ such that for all $\psi \in \Hom(R_n, A)$, we have 
$h\psi(1) = \psi(1) +  \psi(j)^{t_{1, j}}P(\psi(j)^N)$  and $h\psi(m) = \psi(m)$ for all $m \geq 2$. In particular $h\phi_i(1) = \phi_i(1) + 0 = \alpha_i$ for all $i\in \{1, \dots, k-1\}$, and $h\phi_k(1) = \beta + \alpha^{t_{1, j}} \lambda$. The claim follows. 

\begin{step}\label{step:main:2}
Suppose that  $\left|\F_p(\phi_k(1)^N)\right| \geq \left|\F_p(\phi_k(j)^N)\right|$ for all $j$. If $\F_p(\phi_k(1)^N) \neq L$, then there exists $h \in G$ and $j \in \{2, \dots, n\}$ such that $h(\phi_{\alpha_i}) = \phi_{\alpha_i}$ for all $i \leq k-1$ and  
$\left|\F_p(h\phi_k(j)^N)\right| > \left|\F_p(\phi_k(1)^N)\right|$. 
\end{step}

Assume first that $\phi_k(1) = 0$, so that $\F_p(\phi_k(1)^N) = \F_p$ and hence $\F_p(\phi_k(j)^N) = \F_p$ for all $j$ by the assumptions made in this step. By hypothesis, we jave $A_{\phi_i, 0}=L$ for all $i$. In particular $\phi_k$ is non-zero, hence there is some $m$ such that $\phi_k(m) \neq 0$. We invoke Corollary~\ref{cor:PolynomialTransvection} to find a polynomial transvection $h_0 \in G$ such that  $h_0\psi(1 $ and $ h_0\psi(m) = \psi(m)$ for all $m \geq 2$. Hence $h_0 \phi_i = \phi_i$ for all $i=1, \dots, k-1$ and $h_0\phi_k(1) \neq 0$. We set $\phi'_k = h_0 \phi_k$. Since $\F_p(\phi'_k(j)^N) = \F_p(\phi_k(j)^N = \F_p$ for all $j \geq 2$, we have $\left|\F_p(\phi'_k(1)^N)\right| \geq \left|\F_p(\phi'_k(j)^N)\right|$ for all $j$. Moreover $\F_p(\phi'_k(1)^N) \subseteq L$ since $\phi_k$ and $\phi'_k$ are in the same $G$-orbit. Therefore, upon replacing $\phi_k$ by $\phi'_k$, we may and will assume henceforth that $\phi_k(1) \neq 0$. 

By Lemma~\ref{lem:A-phi-0}, there exists an index $j\geq 2$ such that $\phi_k(1)^{-d_j}\phi_k(j) \not \in \F_p\left(\phi_k(1)^N\right)$, where $d_j = e_j\dots e_n$. Therefore, we have  $\phi_k(j)^{N} \not \in \F_p\left(\phi_k(1)^N\right)$ or  $\phi_k(1)^{d_j}\phi_k(j)^{N-1} \not \in \F_p\left(\phi_k(1)^N\right)$. We then invoke Lemma~\ref{claim:enlarge}(ii) with exponent $t_{j, 1} = d_j$, $\alpha = \phi_k(1)$ and $\beta = \phi_k(j)$. This ensures the existence of some $\lambda \in \F_p(\alpha^N)$ such that $\F_p\left((\beta + \lambda \alpha^{d_j})^N\right)$ is strictly larger than $\F_p\left(\alpha^N\right)$.

By hypothesis, the elements $\phi_{\alpha_1}, \dots, \phi_{\alpha_k}$ are in pairwise distinct $\Gamma$-orbits,
this means that $\alpha_1^N, \dots, \alpha_k^N$ have distinct minimal polynomials over $\F_p$. By hypothesis, we have $L = A_{\phi_i, 0}$ for all $i$. In view of Lemma~\ref{lem:A-phi-0}, this implies that   $\F_p\left(\alpha_i^N\right) = L$ for all $i\in \{1, \dots, k-1\}$, while $\F_p\left(\phi_k(1)^N\right) \neq L$ by assumption. Therefore we have that  $\alpha_1^N, \dots, \alpha_{k-1}^N, \phi_k(1)^N$ have distinct minimal polynomials over $\F_p$. Therefore, by Lemma~\ref{lem:polynomial-map-k-tuple}, there is a polynomial $P \in \F_p[y]$ such that $P(\alpha_i^N) = 0$ for all $i=1, \dots, k-1$ and $P(\phi_k(1)^N)= P(\alpha^N) = \lambda$. 

Finally, we invoke Corollary~\ref{cor:PolynomialTransvection} to find a polynomial transvection $h \in G$ such that $h\psi(j) =\psi(j) + \psi(1)^{d_j}P(\psi(1)^N)$ and $h\psi(m) = \psi(m)$ for all $m \neq j$. The claim follows.

\begin{step}\label{step:main:3}
There exists $h \in G$ such that $h(\phi_{\alpha_i}) = \phi_{\alpha_i}$ for all $i \leq k-1$, and that 
 $\F_p(h\phi_k(1)^N) = L$.
\end{step}

By applying iteratively the first two steps, we find a sequence of elements $h_m \in G$ which all fix $\phi_{\alpha_i}$ for $i \leq k-1$, and such that the cardinality of  $\F_p\left(h_m\phi_k(1)^N\right)$ strictly increases with $m$. Recall moreover that $g\phi_k(1)^N \in A_{\phi_k, 0} = L$ for all $g \in G$, so that $\F_p\left(h_m\phi_k(1)^N\right) \subseteq L$ for all $m$. Therefore, the process stops once an element $h_m$ satisfying the condition  that $\F_p\left(h_m\phi_k(1)^N\right) = L$ is found. This proves the claim. 

\begin{step}\label{step:main:4}
Suppose that $\F_p(\phi_k(1)^N) = L$. Then there exists $j \in \{1, \dots, n\}$, $\gamma \in \Gamma$ and $h \in G$ such that the elements $h\phi_1(j)^N, h\phi_2(j)^N, \dots h\phi_{k-1}(j)^N, h\gamma \phi_k(j)^N$ have pairwise different minimal polynomials, and each of them generates $L$ over $\F_p$. 
\end{step}

Set $\beta = \phi_k(1)$. 
Suppose first that $\alpha_1^N, \dots, \alpha_{k-1}^N, \beta^N$ have pairwise different minimal polynomials. Recall that $\phi_i = \phi_{\alpha_i}$ for all $i \leq k-1$. Therefore, the required conclusion is satisfied with $j=1$, $\gamma= 1$ and $h=1$ in this case.

Suppose next that $\phi_k(m) = 0$ for all $m \geq 2$. This means that $\phi_k = \phi_\beta$.  Since $\phi_1, \dots, \phi_k$ lie in pairwise distinct $\Gamma$-orbits, we deduce from Lemma~\ref{lem:Gamma-orbits} that $\alpha_1^N, \dots, \alpha_{k-1}^N, \beta^N$ have pairwise different minimal polynomials. Therefore, we are reduced to the first case above.

We assume henceforth that there is $s \in \{1, \dots, k-1\}$ such that $\alpha_s^N$ and $\beta^N$ have the same minimal polynomials. By the previous paragraph, this implies that $\phi_k(j) \neq  0$ for some $j \geq 2$. Upon replacing $\phi_k$ by $\gamma\phi_k$ for a suitable $\gamma \in \Gamma$, we may then assume that 
$$\phi_s(1) = \alpha_s = \beta = \phi_k(1).$$ 

We next claim that there exists some $\delta \in L$ such that $\delta^N$ and $(\phi_k(j) + \delta)^N$ have distinct minimal polynomials, and that each of these two elements generates $L$ as an $\F_p$-algebra. We verify this   by a counting argument as in Lemma~\ref{claim:enlarge}, as follows. 

If $\F_p(\delta^N)$ is a proper subfield of $L$, then $\delta$ satisfies a polynomial equation over $\F_p$ of degree $Np^t$ for some $t \in \{1, \dots, \ell-1\}$. The same conclusion holds if $\F_p\big((\phi_k(j) + \delta)^N \big)$ is a proper subfield of $L$. Finally, if  $\delta^N$ and $(\phi_k(j) + \delta)^N$ have the same minimal polynomial, then we have $(\phi_k(j) + \delta)^N = \delta^{Np^t}$ for some $t\in \{0, 1, \dots,  \ell-1\}$.  This means that
either $\delta$ satisfies an equation of degree $N-1$, or $\delta$ satisfies an equation of degree $Np^t$ for some $t \in \{1, \dots, \ell-1\}$. Therefore, the set of those $\delta$ that must be excluded is of cardinality at most the sum of the degrees of these equations 
$$
N-1 + 3N \sum_{t=1}^{\ell-1} p^t = N-1 + 3N \frac{p^\ell-p}{p-1}.
$$
Using the hypothesis that $p \geq 3E-2 = 3N+1$, we have 
$$(3N+1) p^\ell \leq p^{\ell+1} < p^{\ell+1} + 2Np + p + N-1.$$
This implies that
$$N-1 + 3N \frac{p^\ell-p}{p-1} < p^\ell,$$
so that the field $L$, which is of order $p^\ell$ contains at least one element $\delta$ outside of that critical set. 

We fix that element $\delta$ and we set $\beta_s = \delta$ and $\beta_k = \phi_k(j) + \delta$. 
Using the hypothesis that $k  \leq \frac{p^\ell (p-E)}{\ell p E}$ if $\ell \geq 2$, and that $k \leq \frac{p-1}{E-1}$ if $\ell = 1$, we deduce from Lemma~\ref{lem:Gamma-orbits}(ii) that there exist $\beta_1, \dots, \beta_{s-1}, \beta_{s+1}, \dots, \beta_{k-1} \in L$ such that $\beta_1^N, \dots, \beta_k^N$ are pairwise distinct minimal polynomials, and $\F_p(\beta_m^N) = L$ for each $m$. 

Since $\alpha_1^N, \dots, \alpha_{k-1}^N$ have pairwise different minimal polynomials, Lemma~\ref{lem:polynomial-map-k-tuple} affords a polynomial $P\in \F_p[y]$ such that 
$$
P(\alpha_m^N) = \frac{\beta_m}{\alpha_m^{d_j}} \qquad \text{for each } m=1, \dots, k-1,
$$ 
where $d_j =t_{j, 1} = e_j \dots e_n$. 
Using Corollary~\ref{cor:PolynomialTransvection}, we obtain a polynomial transvection $h \in G$ such that for each $\psi \in \Hom(R_n, A)$, we have $$h\psi(j) = \psi(j)+\psi(1)^{d_j}P(\psi(1)^N)$$ 
and $h\psi(m) = \psi(m)$ for all $m \neq j$. It follows that $h\phi_i(j) = \beta_i$ for all $i=1, \dots, k$, and the step is complete. 

\begin{step}
End of the proof.
\end{step}

In view of Step~\ref{step:main:3}, we may assume that $\F_p(\phi_k(1)^N) = L$. We then invoke Step~\ref{step:main:4}, and use the symbols $j, h, \gamma$ arising from its statement. Set $\chi_i = h\phi_i$ for $i \leq k-1$, and $\chi_k = h\gamma\phi_k$. Since $n \geq 3$, we may choose an index $l \in \{1, \dots, n\}\setminus \{1, j\}$. By Step~\ref{step:main:4}, we know that $\chi_1(j)^N, \chi_2(j)^N, \dots, \chi_k(j)^N$ have pairwise different minimal polynomials, and each of those elements generate $L$ over $\F_p$. Therefore, we may apply Lemma~\ref{lem:polynomial-map-k-tuple} and Corollary~\ref{cor:PolynomialTransvection}, in order to construct  a polynomial  transvection $h' \in G$ such that $h'\chi_i(l) = \alpha_i$ for all $i$. Repeating the same argument, we obtain another polynomial  transvection $h'' \in G$ such that $h''h'\chi_i(1) = \alpha_i$ for all $i$ (obviously, we may take $h''$ trivial if $l=1$). Finally, we invoke again Lemma~\ref{lem:polynomial-map-k-tuple} and Corollary~\ref{cor:PolynomialTransvection} to find $n-1$ further polynomial transvections $h_2, \dots, h_n \in G$, commuting pairwise, such that $h_m h'' h'\chi_i(m) = 0$  and $h_m h'' h'\chi_i(1) = \alpha_i$ for all $i$. It follows that $h_2 h_3 \dots h_n h'' h'\chi_i = \phi_{\alpha_i}$ for all $i=1, \dots, k$. Therefore, the element $g = h_2 h_3 \dots h_n h'' h' h$ has the desired property. This finishes the proof.
\end{proof}

Specializing to the large orbit described in Corollary~\ref{cor:LargeOrbit}, and remembering the a finite $k$-transitive group in degree~$\geq 25$ contains the full alternating group as soon as $k \geq 4$, we obtain the following. 

\begin{coro}\label{cor:Alt-quotient}
Let $A$ be a finite field extension of $\F_p$ of degree $\ell$. Let $\Phi$ denote the set of those homomorphisms $\phi \in \Hom(R_n, A)$ such that $A_{\phi, 0} = A$. 

If $E \geq 2$,  $p \geq 3E-2$ and $p^{\ell-1} \geq 4\ell$, 
then the image of $G$ in $\Sym(\Gamma\backslash \Phi)$ induced by the $G$-action on the  $\Gamma$-orbits on $\Phi$ induces the full alternating group $\Alt(\Gamma\backslash \Phi)$.
\end{coro}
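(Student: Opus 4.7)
My plan is to deduce Corollary~\ref{cor:Alt-quotient} from three inputs: (1) the $4$-transitivity of $G$ on $X := \Gamma \backslash \Phi$, which will come from Theorem~\ref{thm:almost-k-transitivity}; (2) a lower bound $|X| \geq 25$, which will come from Corollary~\ref{cor:LargeOrbit}; and (3) the CFSG-based classification of finite $4$-transitive groups of degree $\geq 25$ (\cite[Th.~4.11]{Cameron}), which will force the image of $G$ in $\Sym(X)$ to be $\Alt(X)$ or $\Sym(X)$. A parity argument will rule out $\Sym(X)$.

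First I verify that Theorem~\ref{thm:almost-k-transitivity} applies with $L = A$ and $k = 4$. This requires $4 \leq \frac{p^\ell(p-E)}{\ell p E}$. From $p \geq 3E - 2$ I get $p - E \geq 2(E-1)$, and together with $p^{\ell-1} \geq 4\ell$ this yields
$$\frac{p^\ell(p-E)}{\ell p E} \;=\; \frac{p^{\ell-1}(p-E)}{\ell E} \;\geq\; \frac{4\ell \cdot 2(E-1)}{\ell E} \;=\; \frac{8(E-1)}{E} \;\geq\; 4,$$
the last inequality holding because $E \geq 2$. Given this, $4$-transitivity of $G$ on $X$ follows: starting from four distinct $\Gamma$-orbits in $\Phi$, I pick representatives $\phi_i$, use Theorem~\ref{thm:transitivity}(i) to extract elements $\alpha_i \in A_{\phi_i,1}$ generating $A$ over $\F_p$, and invoke Lemma~\ref{lem:Gamma-orbits}(ii) (applicable exactly because $k \leq p^\ell(p-E)/(\ell p E)$) to adjust if necessary so that $\alpha_1^{E-1},\dots,\alpha_4^{E-1}$ have pairwise distinct minimal polynomials over $\F_p$; Theorem~\ref{thm:almost-k-transitivity} then simultaneously moves the tuple into the standard form $(\phi_{\alpha_i})_{i=1}^4$ up to $\Gamma$, which by Lemma~\ref{lem:Gamma-orbits}(i) is exactly transitivity of $G$ on $4$-tuples of distinct elements of $X$.

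Second, I check $|X| \geq 25$. Corollary~\ref{cor:LargeOrbit} gives $|\Phi| \geq p^{\ell n}\bigl(1-(E-1)^n/p^n\bigr)$, and the $\Gamma$-action on $\Phi$ has orbits of size at most $\ell(E-1)$ since the Frobenius contribution has order dividing $\ell$ and $\lambda$ has order dividing $E-1$. Therefore
$$|X| \;\geq\; \frac{p^{\ell n}\bigl(1-(E-1)^n/p^n\bigr)}{\ell(E-1)}.$$
Because $p \geq 3E-2$ forces $(E-1)/p \leq 1/3$ and $n \geq 3$, the correction factor is at least $26/27$, so the right-hand side dominates a large power of $p$ divided by only a linear quantity in $\ell$ and $E-1$; $|X| \geq 25$ is then immediate in every admissible regime (the tightest case $\ell = 1$, $E = 2$, $p = 5$, $n = 3$ already gives $|X| = p^n - 1 = 124$).

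Finally, since $G$ is $4$-transitive on $X$ and $|X|\geq 25$, \cite[Th.~4.11]{Cameron} forces the image of $G$ in $\Sym(X)$ to be $\Alt(X)$ or $\Sym(X)$. Because $p \geq 3E-2 \geq 4$ and $p$ is prime, $p$ is odd; as $G$ is generated by elements of order $p$, every element of $G$ acts on the finite set $X$ as an even permutation, which excludes $\Sym(X)$. I expect the only subtle step of the programme to be the reduction from the technical output of Theorem~\ref{thm:almost-k-transitivity} (which concerns $k$-tuples of honest points together with a genericity assumption on the associated $\alpha_i$'s) to the literal $4$-transitivity of $G$ on $\Gamma$-orbits; the required flexibility in the choice of the $\alpha_i$'s is exactly what Lemma~\ref{lem:Gamma-orbits}(ii) provides, under the numerical hypothesis on $k$ verified in the first step.
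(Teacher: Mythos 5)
Your argument follows the same route as the paper's own proof: verify $4 \leq p^\ell(p-E)/(\ell p E)$, invoke Theorem~\ref{thm:almost-k-transitivity} (with $L = A$) to obtain $4$-transitivity of $G$ on $\Gamma\backslash\Phi$, bound $|\Gamma\backslash\Phi| \geq 25$ via Corollary~\ref{cor:LargeOrbit}, then finish with \cite[Th.~4.11]{Cameron} and the parity of the generators. Two small repairs. First, the ``tightest case $\ell = 1$'' you offer as evidence is not admissible: the hypothesis $p^{\ell-1}\geq 4\ell$ reads $1 \geq 4$ when $\ell = 1$, so the corollary is vacuous there; the admissible regime starts at $\ell \geq 2$, and in place of the spurious example one should do the computation the paper does, namely $|\Gamma\backslash\Phi| \geq \frac{p^{n\ell}}{\ell N}\bigl(1-(N/p)^n\bigr) > p^2\bigl(\frac{p}{N} - (N/p)^{n-1}\bigr) > p^2(3-\frac19) > 25$ with $N = E-1$. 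Second, your $4$-transitivity elaboration is correct in spirit but a bit loosely worded: since the $\alpha_i$ you ``extract'' and adjust depend on the $\phi_i$, the resulting standard tuple is not a priori a fixed target. The clean fix is to observe that because $L = A$ we have $A_{\phi,1} = A$ for every $\phi\in\Phi$ (Lemma~\ref{orbit:invariants} gives $A_{\phi,1}$ as a one-dimensional $A_{\phi,0}$-vector space inside $A$), so one may choose the target tuple $\alpha_1,\dots,\alpha_4$ once and for all, independent of the source; Theorem~\ref{thm:almost-k-transitivity} then sends every source $4$-tuple to the same fixed $\bigl([\phi_{\alpha_1}],\dots,[\phi_{\alpha_4}]\bigr)$, which is exactly $4$-transitivity.
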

\begin{proof}
Since $E\geq 2$ and $p\geq 3E-2$, we have $p-E \geq 2E-2 \geq E$. Since $p^{\ell-1} \geq 4\ell $, we infer that $p^{\ell-1}(p-E) \geq 4\ell  E $, so that $\frac{p^{\ell-1} (p-E)}{\ell  E} \geq 4$. In view of Theorem~\ref{thm:almost-k-transitivity}, it follows that the $G$-action on $\Gamma\backslash \Phi$ is $4$-transitive. Moreover, setting $N = E-1$, we have 
$$
|\Gamma\backslash\Phi| \geq \frac{p^{n\ell}}{\ell N}\left(1-\left(\frac{N}{p} \right)^n \right)
$$
by Corollary~\ref{cor:LargeOrbit}, since the orbits of $\Gamma$ have size at most $\ell N$. 

By hypothesis, we have $p \geq 3N+1 > 3N$. Moreover $n \geq 3$ and $\ell \geq 1$, so $\frac{p^{n\ell}}{\ell N} \geq \frac{p^3} N = p^2 \frac p N$. We deduce that 
$$
|\Gamma\backslash\Phi| \geq p^2 \left(\frac{p}{N} -\left(\frac{N}{p}\right)^{n-1} \right) > p^2 \left(3- \frac{1}{9}\right)> 25.
$$
The conclusion follows using \cite[Th.~4.11]{Cameron}, together with the fact that  
$G$  does not have any quotient isomorphic to $\Sym(n)$ for $n \geq 2$ since it    is generated by elements of odd order.
\end{proof}

\begin{remark}
If the hypothesis that $p^{\ell-1} \geq 4\ell$ is replaced by $p^{\ell-2} \geq \ell$, then the same argument using Theorem~\ref{thm:almost-k-transitivity} shows that the $G$-action on $\Gamma \backslash \Phi$ is $p$-transitive. Similarly as in Remark~\ref{rem:CFSG-free}, the conclusion that $G$ induces the full alternating group on $\Gamma\backslash \Phi$ can then be established using \cite[Corollary to Theorem~A]{Pyber} (and thereby avoiding  the CFSG), provided $p$ is large enough. 
\end{remark}

The following consequence, which is stated as Theorem~\ref{thm:Alt-quotient-intro} in the introduction, is immediate. 

\begin{coro}\label{cor:Alt-quotient:bis}
If $E \geq 2$ and $p \geq 3E-2$, then the group $G$ has a quotient isomorphic to $\Alt(d)$ for infinitely many degrees $d$.
\end{coro}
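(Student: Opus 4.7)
The plan is to derive Corollary~\ref{cor:Alt-quotient:bis} directly from Corollary~\ref{cor:Alt-quotient} by letting the degree of the auxiliary field extension grow. Fix $E \geq 2$ and $p \geq 3E-2$ as in the hypothesis. For each integer $\ell \geq 1$, take $A = \F_{p^\ell}$ and let $\Phi_\ell \subseteq \Hom(R_n, A)$ denote the set of homomorphisms $\phi$ with $A_{\phi, 0} = A$, as in Corollary~\ref{cor:Alt-quotient}.

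First I would verify that the hypothesis $p^{\ell - 1} \geq 4\ell$ holds for all sufficiently large $\ell$: since $p \geq 3E-2 \geq 4$, the exponential $p^{\ell-1}$ eventually dominates the linear function $4\ell$. So for all $\ell \geq \ell_0$ (with $\ell_0$ depending only on $p$), Corollary~\ref{cor:Alt-quotient} applies, producing a surjection
$$G \twoheadrightarrow \Alt(d_\ell), \qquad d_\ell := |\Gamma \backslash \Phi_\ell|.$$

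Next I would check that the sequence $(d_\ell)_{\ell \geq \ell_0}$ takes infinitely many distinct values. The lower bound
$$d_\ell \geq \frac{p^{n\ell}}{\ell(E-1)}\left(1 - \left(\frac{E-1}{p}\right)^n\right)$$
established in the proof of Corollary~\ref{cor:Alt-quotient} (via Corollary~\ref{cor:LargeOrbit} together with $|\Gamma| \leq \ell(E-1)$) shows that $d_\ell \to \infty$ as $\ell \to \infty$, since $p > E-1$ makes the parenthesized factor a positive constant, and $p^{n\ell}/\ell$ grows without bound. In particular, the set $\{d_\ell : \ell \geq \ell_0\}$ is infinite, and each value $d_\ell$ yields a quotient $\Alt(d_\ell)$ of $G$.

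There is essentially no obstacle here beyond bookkeeping: the heavy lifting has already been done in Corollary~\ref{cor:Alt-quotient} (which in turn rests on Theorem~\ref{thm:almost-k-transitivity} and the $4$-transitive group classification from \cite{Cameron}). The only minor point to note is that distinct degrees $d_\ell$ are automatically realized by distinct alternating groups, so we indeed obtain infinitely many non-isomorphic alternating quotients of $G$, as required.
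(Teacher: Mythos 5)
Your argument is correct and matches the paper's intent: the paper labels the corollary as ``immediate'' from Corollary~\ref{cor:Alt-quotient}, and you have simply spelled out the bookkeeping, namely that $p \geq 3E-2 \geq 4$ makes the hypothesis $p^{\ell-1} \geq 4\ell$ hold for all large $\ell$, and the explicit lower bound on $d_\ell = |\Gamma\backslash\Phi_\ell|$ forces $d_\ell \to \infty$, yielding infinitely many distinct alternating degrees.
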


In the special case where $E=2$, we can specify more explicitly some values of $d$. 

\begin{coro}\label{cor:Alt-quotient:ter}
Suppose that $E=2$ and  $p \geq 5$. Then for each odd   prime $\ell$, the group  $G$ has a quotient isomorphic to $\Alt\left(\frac{p^{\ell n} -p^n}{\ell}\right)$.
\end{coro}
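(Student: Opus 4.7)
The plan is to apply Corollary~\ref{cor:Alt-quotient} with the field $A = \F_{p^\ell}$. This reduces the problem to two concrete tasks: identifying the set $\Phi = \{\phi \in \Hom(R_n, A) \mid A_{\phi, 0} = A\}$ together with the $\Gamma$-orbit structure on it, and verifying the numerical hypotheses $E \geq 2$, $p \geq 3E-2$, and $p^{\ell-1} \geq 4\ell$. Since $E=2$, the first two reduce to $p \geq 4$, which is given by assumption; the third is a direct check for $p \geq 5$ and prime $\ell \geq 3$ (already the base case $\ell = 3$ reads $25 \geq 12$, and $p^{\ell - 1}$ grows exponentially in $\ell$ while $4\ell$ grows linearly).

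For the identification of $\Phi$, I would exploit the hypothesis $E = 2$, which forces the grading by $\Z/(E-1)\Z$ from Definition~\ref{grading} to be trivial. Consequently $A_{\phi, 0}$ coincides with $A_\phi$, namely the subring of $A$ generated by $\phi(x_1), \dots, \phi(x_n)$. Since $\ell$ is prime, the only subfields of $\F_{p^\ell}$ are $\F_p$ and $\F_{p^\ell}$, so the condition $A_{\phi, 0} = A$ is equivalent to $\phi \notin \F_p^n$. Hence $|\Phi| = p^{\ell n} - p^n$.

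Next I would analyze the $\Gamma$-action on $\Phi$. Since $E - 1 = 1$, the cyclic factor of $\Gamma = \Z \ltimes \Z/(E-1)\Z$ is trivial and $\Gamma$ reduces to $\Z$ acting through $\langle F \rangle \cong \Z/\ell\Z$ via the Frobenius $F \colon a \mapsto a^p$. For $\phi \in \Phi$, the stabilizer under $\langle F \rangle$ consists of those $F^j$ fixing $\phi(x_i)$ for every $i$; as these values generate $A_\phi = A$ over $\F_p$, that stabilizer is trivial. Every $\Gamma$-orbit on $\Phi$ therefore has exactly $\ell$ elements, giving
$$|\Gamma \backslash \Phi| = \frac{p^{\ell n} - p^n}{\ell}.$$
Corollary~\ref{cor:Alt-quotient} then produces a surjection $G \twoheadrightarrow \Alt(\Gamma \backslash \Phi)$, yielding the desired quotient.

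The main technical obstacle, and the only reason the proof cannot proceed entirely without a case split, is that the hypothesis $p^{\ell - 1} \geq 4\ell$ of Corollary~\ref{cor:Alt-quotient} enforces $4$-transitivity of the $G$-action on $\Gamma \backslash \Phi$, which fails in the borderline cases $\ell = 2$, $p \in \{5, 7\}$. These borderline cases are not covered by Corollary~\ref{coro:Alt-intro}(i) either, and if one insisted on including them in the statement, they would require either invoking the Babai--Pyber refinements alluded to after Theorem~\ref{thm:Alt-quotient-intro} or showing directly that the available $3$-transitivity, combined with the abundance of short-support polynomial transvections coming from Corollary~\ref{cor:PolynomialTransvection}, already forces the image to contain $\Alt(\Gamma \backslash \Phi)$.
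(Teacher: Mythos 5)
Your argument is essentially the same as the paper's: apply Corollary~\ref{cor:Alt-quotient} with $A = \F_{p^\ell}$, use $E=2$ to trivialize the $\Z/(E-1)\Z$-grading so that $A_{\phi,0}=A_\phi$, use primality of $\ell$ to identify $\Phi = A^n \setminus \F_p^n$ (equivalently, cite Corollary~\ref{cor:Extension-prime-degree}), and observe that the $\Gamma$-action reduces to a free $\langle F\rangle$-action with orbits of size $\ell$. This is exactly how the paper proceeds.

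The obstruction you flag at $\ell=2$ is genuine, and it is worth realizing that the paper's own proof shares it verbatim: it simply says ``the conclusion follows from Corollary~\ref{cor:Alt-quotient},'' whose hypothesis $p^{\ell-1}\geq 4\ell$ indeed fails for $p\in\{5,7\}$, $\ell=2$ (Corollary~\ref{coro:Alt-intro}(i) already restricts to $\ell\geq 3$, so this looks like an oversight). However, your diagnosis that $4$-transitivity itself fails in those two cases, and the proposed remedies (Babai--Pyber, or extra transvections), overshoot. For $p=7$, $\ell=2$, the quantity $\frac{p^\ell(p-E)}{\ell p E}=\frac{49\cdot 5}{28}=8.75$ from Theorem~\ref{thm:almost-k-transitivity} already exceeds $4$, so that theorem gives $4$-transitivity directly; only the coarser sufficient condition $p^{\ell-1}\geq 4\ell$ packaged into Corollary~\ref{cor:Alt-quotient} is violated. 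The single case where the \emph{stated} transitivity bound dips below $4$ is $p=5$, $\ell=2$, where $\frac{p^\ell(p-E)}{\ell p E}=3.75$, but even there the loss is incurred entirely in the last line of the proof of Lemma~\ref{lem:Gamma-orbits}(ii), which replaces the sharper count $K=\frac{p^\ell(p-E)+p(E-1)}{\ell(E-1)(p-1)}$ of available $\Gamma$-orbits by the weaker $\frac{p^\ell(p-E)}{\ell p E}$. For $E=2$, $p=5$, $\ell=2$ one computes $K=\frac{75+5}{2\cdot 4}=10$, so the underlying argument still delivers $4$-transitivity. The clean fix is therefore either to run Theorem~\ref{thm:almost-k-transitivity} with the unsimplified bound $K$, or to restrict the statement of the corollary to $\ell\geq 3$ as in the introduction.
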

\begin{proof}
The hypotheses imply that $\Phi = A^n \setminus (\F_p)^n$, see Corollary~\ref{cor:Extension-prime-degree}. Moreover each $\Gamma$-orbit on $\Phi$ has size~$\ell$, since the Frobenius automorphism acts on $\Phi$ without fixed points. Since the map $x \mapsto  x - \log_p(x)$ is strictly increasing for $x \geq 1$, we have $\ell - 2 \geq \log_p(\ell)$ since $\ell \geq 3$ and $p \geq 5$, hence $p^{\ell-1} \geq p \ell \geq 4\ell$.  Thus the conclusion  follows from Corollary~\ref{cor:Alt-quotient}.
\end{proof}

By definition, the group $G = G_{\F_p, \mathbf e}$, with $\mathbf e=( e_1, \dots, e_n)$, is $n$-generated. In the case where $e_1= e_2 = \dots = e_n$, the group $G$ is normalized by the  automorphism $\sigma  \in \Aut(\F_p[x_1, \dots, x_n])$ that  permutes cyclically the indeterminates. We assume that $e_1 > 1$, so that the automorphism $\sigma$ does not preserve the grading. Hence $G$ is a proper subgroup of $\widetilde G =\langle \sigma \rangle  G $ which is isomorphic to the semi-direct product $ \langle \sigma \rangle  \ltimes G$. Clearly  $\widetilde G$ is generated by the pair $\{\sigma, \alpha_{1, 2}^{(e_1)}(1)\}$. Therefore, the following corollary provides an infinite family of Cayley graphs of degree $4$ for alternating groups, that form expanders by Theorem~\ref{thm:T-for-G}.

\begin{coro}\label{cor:2-generated}
Let $p \geq 23$ be a prime such that $p \neq 1 \mod 7$ and set $\widetilde G = \langle \sigma \rangle \ltimes G_{\F_p,  (2, 2, 2)}$. Then for each  prime $\ell \geq 5$, the group  $\widetilde G$ has a quotient isomorphic to $\Alt\left(\frac{p^{3\ell } -p^3}{\ell}\right)$.
\end{coro}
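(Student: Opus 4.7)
The plan is to apply Corollary~\ref{cor:Alt-quotient} to $G = G_{\F_p, 3; 2, 2, 2}$ over the field $A = \F_{p^\ell}$, and then lift the resulting alternating quotient from $G$ to $\widetilde G$. Here $E = 2 \cdot 2 \cdot 2 = 8$ and $n = 3$, so the hypotheses $E \geq 2$, $p \geq 3E - 2 = 22$ and $p^{\ell-1} \geq 4\ell$ are comfortably met by $p \geq 23$ and $\ell \geq 5$. The corollary will produce a surjection $G \twoheadrightarrow \Alt(\Gamma \backslash \Phi)$, where $\Phi = \{\phi \in \Hom(R_3, A) : A_{\phi, 0} = A\}$ and $\Gamma = \Z \ltimes \Z/7\Z$; all that remains is to compute $|\Gamma \backslash \Phi|$ and to bootstrap from $G$ to $\widetilde G$.

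The first task is to identify $\Phi$ explicitly as $A^3 \setminus \F_p^3$. Since $\ell$ is prime, the subfields of $A$ are only $\F_p$ and $A$, so it suffices to show that $A_{\phi, 0} = \F_p$ forces $\phi \in \F_p^3$. The hypothesis $p \not\equiv 1 \pmod 7$ combined with $\ell \geq 5$ prime implies that the multiplicative order of $p$ modulo $7$ (which divides~$6$) cannot divide $\ell$, so $\gcd(7, p^\ell - 1) = 1$ and $x \mapsto x^7$ is a bijection on $A^*$. Applying Lemma~\ref{lem:A-phi-0} when $\phi(x_1) \neq 0$, one reads off that $\phi(x_1)^7 \in \F_p$ forces $\phi(x_1) \in \F_p$, and then $\phi(x_1)^{-4}\phi(x_2)$ and $\phi(x_1)^{-2}\phi(x_3)$ lie in $\F_p$ forces $\phi(x_2), \phi(x_3) \in \F_p$. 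A parallel argument using the surviving monomial generators of $R_3^{(0)}$ (such as $x_2^7, x_3^7, x_2^3 x_3$) handles the cases $\phi(x_1)=0$. Hence $|\Phi| = p^{3\ell} - p^3$.

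Next, compute the $\Gamma$-orbit sizes on $\Phi$. Since $\gcd(7, p^\ell - 1) = 1$, the field $A$ contains no non-trivial $7$th root of unity, so a generator $\lambda$ of the $(E-1)$-th roots of unity in $A$ equals $1$ and the $\Z/7\Z$-factor of $\Gamma$ acts trivially via $m_\lambda$. Hence $\Gamma$ acts on $A^3$ only through the Frobenius $F$, of order $\ell$. For any $\phi \in \Phi$ some coordinate lies outside $\F_p$, and primality of $\ell$ forces that coordinate to generate $A$ over $\F_p$; therefore $F^k \phi = \phi$ only for $\ell \mid k$, and the Frobenius orbit of $\phi$ has size exactly $\ell$. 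Consequently $|\Gamma\backslash \Phi| = (p^{3\ell} - p^3)/\ell$, and Corollary~\ref{cor:Alt-quotient} yields a surjection $G \twoheadrightarrow \Alt\left((p^{3\ell} - p^3)/\ell\right)$.

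To finish, observe that $\sigma$ permutes coordinates and thus preserves both $\F_p^3$ and $\Phi$; it also commutes with $F$ (being defined over $\F_p$) and with the trivial $\Z/7\Z$-action, so it descends to an action on $\Gamma \backslash \Phi$. The image of $\widetilde G$ in $\Sym(\Gamma \backslash \Phi)$ therefore contains the image of $G$, namely $\Alt(\Gamma \backslash \Phi)$, and so equals $\Alt$ or $\Sym$ on that set. But $\widetilde G$ is generated by $\sigma$ (of order $3$) together with polynomial transvections of order $p$, all of odd order, so the image lies in $\Alt$. Hence the image equals $\Alt\left((p^{3\ell} - p^3)/\ell\right)$, giving the required quotient of $\widetilde G$. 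The main technical obstacle is the field-theoretic step that $A_{\phi, 0} = \F_p \Longrightarrow \phi \in \F_p^3$, which is exactly where the congruence hypothesis $p \not\equiv 1 \pmod 7$ enters; the rest is a careful accounting of orbits.
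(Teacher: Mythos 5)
Your proof is correct and follows essentially the same outline as the paper's: establish that $\gcd(7, p^\ell-1) = 1$, identify $\Phi = A^3 \setminus \F_p^3$, compute $|\Gamma\backslash\Phi| = (p^{3\ell}-p^3)/\ell$, invoke Corollary~\ref{cor:Alt-quotient}, and promote from $G$ to $\widetilde G$ using the observation that $\sigma$ has odd order. There is one place where you genuinely sharpen the paper's argument: for the identification $\Phi = A^3\setminus \F_p^3$, the paper simply points to Corollary~\ref{cor:Extension-prime-degree}, but that corollary carries the hypothesis $\ell \geq E = 8$, which is not satisfied when $\ell = 5$ or $\ell = 7$. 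Your direct route --- use $\gcd(7,p^\ell-1)=1$ to conclude that $x \mapsto x^7$ is a bijection on $A^*$ and on $\F_p^*$, then read off from Lemma~\ref{lem:A-phi-0} (or from the monomial generators $x_2^7, x_3^7, \dots$ of $R_3^{(0)}$ when $\phi(x_1)=0$) that $A_{\phi,0}=\F_p$ forces every coordinate of $\phi$ into $\F_p$ --- covers those small values of $\ell$ and is exactly the step needed to make the paper's cited reduction airtight. The remainder (orbit sizes are exactly $\ell$ since the $\Z/7\Z$-factor acts trivially, $\sigma$ descends to $\Gamma\backslash\Phi$ because it commutes with Frobenius, and all generators of $\widetilde G$ have odd order so the image lies in $\Alt$) matches the paper.
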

\begin{proof}
Let $G = G_{\F_p, (2, 2, 2)}$. The hypotheses imply that $\Phi = A^n \setminus (\F_p)^n$, see Corollary~\ref{cor:Extension-prime-degree}. Since $G$ is normal in $\widetilde G$, it follows that $\Phi$ is $\sigma$-invariant. 

Notice that $p^\ell - 1$ is relatively prime to $E-1 = 7$. Indeed,  the equality $p^\ell =1 \mod 7$ implies that the multiplicative order of $p$ modulo $7$ divides the gcd of $6$ and $ \ell$. Since $\ell$ is prime, we obtain three cases: $p = 1 \mod 7$, or $\ell =2$, or $\ell = 3$.  
None of those cases occurs in view of the hypotheses on $p$ and $\ell$. 

It follows that the $\Gamma$-orbits on $\Phi$ all have size $\ell$. Therefore the $G$-action on $\Gamma\backslash \Phi$ has the full alternating group $\Alt(\frac{p^{3\ell } -p^3}{\ell})$ as its image, see Corollary~\ref{cor:Alt-quotient}. Since $\sigma$ is of order $3$, it acts on  $\Gamma\backslash \Phi$ as an even permutation. The conclusion follows.
\end{proof}

Corollary~\ref{coro:Alt-intro} follows from Corollaries~\ref{cor:Alt-quotient:ter} and~\ref{cor:2-generated}.


\end{document}